\newtheorem*{thm*}{Theorem}
\newtheorem{thmglobal}{Theorem}
\newtheorem{thm}{Theorem}[section]
\newtheorem{lemma}[thm]{Lemma}
\newtheorem{prop}[thm]{Proposition}
\newtheorem{rem}[thm]{Remark}
\newtheorem{notation}[thm]{Notation}
\newtheorem{eg}[thm]{Example}
\theoremstyle{definition}
\newtheorem{defn}[thm]{Definition}
\newcommand{\R}{\mathbb{R}}
\newcommand{\Z}{\mathbb{Z}}
\newcommand{\id}{\mathrm{id}}
\newcommand{\Hom}{\mathrm{Hom}}
\newcommand{\ud}{\mathrm{d}}
\newcommand{\dd}[2]{\frac{\ud {#1}}{\ud {#2}}}
\newcommand{\1}[1]{#1^{(1)}}
\newcommand{\nn}[1]{#1^{({n})}}
\newcommand{\nk}[1]{#1^{(k)}}
\newcommand{\F}{{F}}
\newcommand{\K}{{K}}
\newcommand{\s}{{s}}
\title[Ergodic measures for periodic type $\Z^m$-skew-products over IETs]{Ergodic measures for periodic type $\Z^m$-skew-products over 
Interval Exchange Transformations}
\author{Yuriy Tumarkin}
\address{Institut f\"ur Mathematik, Universit\"at Z\"urich, Winterthurerstrasse 190, CH-8057 Z\"urich, Switzerland}
\email{yuriy.tumarkin@math.uzh.ch}
\begin{document}
\maketitle

\begin{abstract}
We consider a special case of the question of classification of invariant Radon measures of $\Z^m$-valued skew-products over interval exchange transformations, 
which arise as Poincaré sections of the linear flow on periodic infinite translation surfaces. In the case of periodic type skew-products, we 
obtain a full classification of ergodic invariant Radon measures, showing them to be precisely the Maharam measures, a family of measures 
parametrised by $\R^m$. For the proof we translate Rauzy-Veech renormalisation for skew-products into the symbolic language of the adic coding, and 
apply a symbolic result of Aaronson, Nakada, Sarig and Solomyak. Further, we use this language and a new extension of the Rauzy-Veech cocycle to 
find an explicit form for the Maharam measures and deduce the weak*-continuity of the measures depending on the parameter.

\end{abstract}

\section{Introduction}

For interval exchange transformations of finitely many intervals (which we will call finite IETs), and correspondingly linear flows on compact 
translation surfaces, the space of invariant measures is very well understood. 
Katok showed in \cite{Katok73} that for the linear flow on a genus $g$ surface, the cone of invariant measures has dimension at most $g$. 
In two celebrated papers, Masur (\cite{Masur82}) and Veech (\cite{Veech82}) showed that almost all IETs are uniquely ergodic with Lebesgue measure 
as the unique invariant measure. 
A few years later, using Masur's criterion (\cite{Masur92}), Kerckhoff, Masur and Smillie showed that on every 
translation surface, the linear flow in almost every direction is uniquely ergodic (\cite{KMS}). 

For IETs of infinitely many intervals (which we will call infinite IETs) and infinite translation surfaces however very little is still known. 
The simplest case of infinite IETs are $\Z^m$-valued skew-products over finite IETs. These arise as Poincar\'e sections of the linear flow on 
$\Z^m$-covers of compact translation surfaces. See \cite{HooperWeiss} for an introduction to $\Z$-covers of translation surfaces and $\Z$-valued 
skew-products, and the book 
in progress \cite{DHV} for a general introduction to infinite translation surfaces.\\

While for finite IETs one can consider Borel probability measures, for skew-products the invariant measures will be infinite.
The natural class of measures to consider in this setup are the \textbf{locally finite Borel measures}, also known as \textbf{Radon measures}, which 
are Borel measures that are finite on any compact set.

A large class of invariant measures for skew-products are the \textbf{Maharam measures}, first introduced by Maharam for $\R$-skew-products in 
\cite{Maharam}. For a Borel map $T:X\to X$, say a Borel measure $\nu$ is \textbf{quasi-invariant} if $\nu \circ T$ is absolutely continuous with 
respect to $\nu$. 
For a topological group $G$ and a map $\phi:X \to G$ consider the skew-product over $T$ with cocycle $\phi$,
\begin{equation}\label{eqn:skew-product}
\begin{split}
T_\phi: X\times G &\to X \times G\\
(x,a) &\mapsto (Tx,a+\phi(x)).
\end{split}
\end{equation}
Let $\psi: G \to \R$ be a continuous homomorphism.
If there exists a quasi-invariant measure $\nu_\psi$ for $T$ with 
\[\dd{\nu_\psi \circ T}{\nu_\psi} = e^{-\psi \circ \phi},\]
then one defines the Maharam measures $\mu_\psi$ on $X \times G$ by
\[\ud\mu_\psi(x,a) = \ud \nu_\psi(x) e^{\psi(a)} \ud \mu_G (a),\] 
where $\mu_G$ is the Haar measure on $G$. One checks by a direct calculation that $\mu_\psi$ is then invariant for $T_\phi$.

Note that for a compact group $G$ the only continuous homomorphism into $\R$ is constant, so the Maharam measures
arise from invariant measures for $T$. In our case $G = \Z^m$, so the set of continuous homomorphisms $G \to \R$ can be identified as $\R^m$.

\subsection{Known results}
The programme of studying measure rigidity of non-compact spaces was initiated in the paper \cite{ANSS} by Aaronson, Nakada, Sarig and Solomyak. 
A survey of this work was presented by Sarig in his 2010 ICM talk \cite{SarigICM}.

The classification of ergodic invariant Radon measures for skew-products over IETs is known only in a few cases.
\begin{enumerate}
	\item In \cite{ANSS} Aaronson, Nakada, Sarig and Solomyak consider \textbf{cylinder flows}, which are skew-products over rotations of the circle, 
	with skewing cocycle	
	\[\chi = (\beta+1)\mathbbm{1}_{[0,\frac{\beta}{\beta+1})} - \beta,\]
	for some $\beta > 0$. They show that the ergodic Radon measures are precisely the Maharam measures. 
	
	In addition in the second part of \cite{ANSS} the authors obtain an invariant measure classification for a class of symbolic systems called 
	\textbf{adic maps}. This is the main result that we apply in this article. 
	
	\item In \cite{PS} Pollicott and Sharp study invariant measures for
	the stable foliation of a certain class of pseudo-Anosov homeomorphisms of infinite abelian covers of surfaces. 
	Under several assumptions on the pseudo-Anosov, such as it being isotopic to the identity,  
	they use the symbolic result of \cite{ANSS} 
	to deduce that the ergodic invariant Radon measures for the foliation are precisely the Maharam measures.
	
	Our setup is closely related to this setting, since one can suspend a periodic type IET as the Poincaré section of the vertical flow on a 
	translation surface, so that the vertical foliation is the stable foliation of a pseudo-Anosov diffeomorphism on the surface.
	
	
	However one should note that the set of pseudo-Anosovs that we consider is disjoint from those covered by the result of \cite{PS}. Indeed, in our 
	setting the pseudo-Anosov always acts non-trivially on the homology of the surface, and hence is never isotopic to the identity.

	\item In \cite{HHW} Hooper, Hubert and Weiss deduce from the first result of \cite{ANSS} the classification of invariant Radon measures for the
	 linear flow on the infinite 
	staircase, and provide in this case a geometric interpretation for Maharam measures as pullbacks of Lebesgue measure on `deformed' staircases.
	
	\item In \cite{Hooper}, Hooper considers infinite translation surfaces with special cylinder decompositions, which arise from what is 
	known as the Thurston-Veech construction. For these surfaces, and for special `renormalisable' directions $\theta$, Hooper shows 
	that the ergodic measures for the linear flow $\varphi^\theta$ are precisely the Maharam measures, and 
	that these can be seen as pullbacks of Lebesgue measure on some other infinite translation surface with the same cylinder decomposition. 
	In particular this is a broad generalisation of the infinite staircase result of \cite{HHW}.

\end{enumerate}
One should note the analogy to the work of Babillot, Ledrappier and Sarig in \cite{BL} and \cite{Sarig04}, where they studied the ergodic invariant 
Radon measures for the horocycle flow on $\Z^m$-covers of compact hyperbolic surfaces. The ergodic measures they identify are analogous to Maharam 
measures in the sense that they scale under Deck transformations, with the scaling depending on a choice of homomorphism from $\Z^m$ to $\R$.
For an overview of this work, see Sarig's survey \cite{SarigICM}.\\

Let us also list some related results which answer similar but slightly different questions.

\begin{enumerate}
\item In \cite{ConzeFraczek} Conze and Fr\k{a}czek construct ergodic skew-products over periodic type IETs.
\item In \cite{HubertWeiss} Hubert and Weiss consider $\Z$-covers of Veech surfaces which admit infinite strips and prove ergodicity of the linear 
flow in almost every direction.
\item The work of Fr\k{a}czek and 
Ulcigrai (\cite{FU}) and later Fr\k{a}czek and Hubert (\cite{FH}) showed that for the generalised Ehrenfest (also known as wind-tree) model, under 
suitable conditions on the homology bundle and the Lyapunov exponents of the Kontsevich-Zorich cocycle, the linear flow in almost every direction 
is not ergodic with respect to Lebesgue measure.
\item M\'alaga and Troubetzkoy studied the ergodicity of linear flows on non-periodic infinite translation surfaces, proving ergodicity 
in almost every direction for the generic aperiodic Ehrenfest model in \cite{MalagaTroubetzkoyEhrenfest}, and unique ergodicity in almost every 
direction for the generic aperiodic infinite staircase surface in \cite{MalagaTroubetzkoyStaircase}.

\end{enumerate}

\subsection{Our results} In this paper we treat the case of skew-products which are periodic for Rauzy-Veech renormalisation (see section \ref{sec:iets}
 for a precise definition 
of Rauzy-Veech renormalisation for skew-products over IETs). We obtain the same classification as the other results, namely that the
ergodic invariant Radon measures are exactly the Maharam ones. 

The precise statement of this result is:

\begin{thmglobal}\label{thm:main}
	Suppose {$T:K\to K$} is an infinitely complete IET of $d$ intervals, $\phi \in G^d$ for $G = \Z^m$.
	If the skew-product $T_\phi: \K \times G \to \K \times G$ is of periodic type (see definition \ref{def:periodic})
	 then the ergodic invariant Radon measures	 for $T_\phi$ are precisely the Maharam measures. To be precise:
	\begin{enumerate}[(I)]
		\item For every homomorphism $\psi: G \to \R$ there exists a unique (up to scaling) measure $\nu_\psi$ on $\K$ which is quasi-invariant under $T$ with 
		$\dd{\nu_\psi \circ T}{\nu_\psi} = e^{-\psi \circ \phi}$, and this measure is non-atomic. 
		
		Given such a $\nu_\psi$ we define the Maharam measure $\mu_\psi$ on $\K\times G$, with 
		\[\ud\mu_\psi(x,a) = \ud \nu_\psi(x) e^{\psi(a)} \ud \mu_G (a),\] where $\mu_G$ is the Haar measure on $G$.
		
		\item Each Maharam measure $\mu_\psi$ is an ergodic invariant Radon measure for $T_\phi$.
		\item Every ergodic invariant Radon measure for $T_\phi$ is up to scale a Maharam measure for some homomorphism $\psi$.
	\end{enumerate}
\end{thmglobal}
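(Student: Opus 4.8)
The plan is to recode the skew-product $T_\phi$ as an adic (Bratteli--Vershik) transformation and to read off the three assertions from the symbolic classification of Aaronson--Nakada--Sarig--Solomyak. Iterating Rauzy--Veech renormalisation of $T_\phi$ (as set up in Section~\ref{sec:iets}) produces a Bratteli diagram whose levels record the successive inductions, together with an extended cocycle tracking the $\Z^m$-displacement; on the full-measure set of points admitting a well-defined coding, the $T_\phi$-orbit structure is conjugated to the Vershik/adic map $\tau$ on the associated path space. The decisive consequence of the periodic type hypothesis (Definition~\ref{def:periodic}) is that this diagram is \emph{stationary}: after one renormalisation period the combinatorics return to themselves, so the diagram is governed by a single primitive matrix $A$ (the Rauzy--Veech matrix of one period), while the $\Z^m$-coordinate is governed by the period action of the extended cocycle. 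Thus, off a coding-exceptional set, $T_\phi$ is measurably isomorphic to an adic map on a stationary, $\Z^m$-labelled diagram.

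Next I would invoke the symbolic theorem of \cite{ANSS} for adic maps on such stationary diagrams. Its output is a Perron--Frobenius / conformal classification: each admissible positive weighting of the diagram yields exactly one ergodic invariant Radon measure for $\tau$, and these exhaust them. Parametrising the admissible weightings, one finds that they are indexed precisely by the continuous homomorphisms $\psi:\Z^m\to\R$, i.e.\ by $\R^m$. Concretely, the potential $-\psi\circ\phi$ twists the transfer operator attached to $A$, and the unique positive eigenvector of the resulting twisted period matrix produces a conformal measure on the path space; this is where the new extension of the Rauzy--Veech cocycle does the work, since it is what records the $\Z^m$-displacement needed to form the twist.

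It then remains to translate back. Projecting the conformal measure to the base yields a measure $\nu_\psi$ on $\K$, and the eigenvector equation becomes exactly the Radon--Nikodym identity $\dd{\nu_\psi\circ T}{\nu_\psi}=e^{-\psi\circ\phi}$; uniqueness up to scale is the uniqueness of the Perron eigenvector, and non-atomicity follows since infinite completeness precludes periodic orbits and hence atomic conformal measures. This gives (I). For (II), invariance of $\mu_\psi$ is the direct computation recalled in the introduction, while ergodicity and the Radon property are inherited from the symbolic side through the coding isomorphism. For (III), the converse is precisely the symbolic classification pulled back: every ergodic invariant Radon measure for $\tau$ lies in the conformal family, so its image under the coding is a Maharam measure $\mu_\psi$ for some $\psi$.

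The main obstacle I expect is making this recoding faithful at the level of \emph{Radon} measures. One must (a) verify the structural hypotheses of the ANSS symbolic theorem for the stationary $\Z^m$-extended diagram, in particular the primitivity/positivity of the period matrix and the correct symbolic handling of the non-compact group extension; and (b) check that the isomorphism between $T_\phi$ and the adic map respects local finiteness, so that compact subsets of $\K\times G$ correspond appropriately and no invariant Radon measure is created or lost on the coding-exceptional set. Setting up the extended Rauzy--Veech cocycle so that periodic type genuinely produces a stationary diagram with the required spectral data is the technical heart of the argument.
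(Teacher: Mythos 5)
Your overall strategy coincides with the paper's: code $T$ by the adic map on a stationary Bratteli diagram, apply the symbolic classification of \cite{ANSS}, and translate back to $\K\times G$. However, there are two places where the proposal glosses over the actual mathematical content, and one of them is a genuine gap. The ANSS theorem does not classify measures for the adic map $\tau$ itself (on a stationary primitive diagram $\tau$ is uniquely ergodic); it classifies ergodic invariant Radon measures for the skew-product $\tau_{\phi_f}$, where $\phi_f$ is the \emph{tail cocycle} of a function $f:\Sigma\to G$ defined over the \emph{shift} $\sigma$, and it requires $f$ to be finite memory and, crucially, \emph{aperiodic} (no solution of $\gamma\circ f = z\,(g\circ\sigma)/g$ with $\gamma\in\mathrm{Hom}(G,S^1)$ nontrivial). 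Your list of hypotheses to verify names only primitivity of the period matrix, which is essentially free from infinite completeness. The substantive work in the paper is (i) constructing the shift-cocycle $f$ (built from partial Birkhoff sums of $\phi$ up the Rokhlin towers) so that $\mathcal{O}(\tau_\phi)=\mathfrak{T}(\sigma_f)$, which is what lets one transfer the ANSS classification from $\tau_{\phi_f}$ to $\tau_\phi$; and (ii) proving aperiodicity of $f$, which is done by enlarging the period $N$ until all towers $Z^{(1)}_j$ have a common beginning visiting every symbol, producing enough fixed points of $\sigma$ to show that the group $\Delta$ of differences of periodic Birkhoff sums is all of $G$. Neither step appears in your outline, and without (ii) the classification simply does not apply.

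A secondary issue: your argument that no invariant Radon measure lives on the coding-exceptional set (``infinite completeness precludes periodic orbits and hence atomic conformal measures'') is not sufficient on the non-compact space $\K\times G$ --- absence of periodic orbits does not rule out atomic invariant Radon measures there (counting measure on a single orbit is Radon if the orbit leaves every compact set). The paper instead uses the periodic type hypothesis $\phi^{(N)}=\phi$ to show that the $T_\phi$-orbit of $(0,a)$ returns infinitely often to the fixed compact set $\K\times\{a+\phi(0)\}$, which is what forces any invariant Radon measure to give it zero mass. Finally, your transfer-operator/twisted-matrix description of where the Maharam measures come from is closer to the paper's Section on weak-$*$ continuity (the level-counting cocycle and its Perron eigenvector, used to prove Theorem 2) than to the proof of Theorem 1, where the existence, uniqueness and ergodicity statements are imported directly from \cite{ANSS} rather than re-derived.
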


Further, we obtain an exact expression for the Maharam measures of certain {sets} arising from dynamical partitions and 
deduce:

\begin{thmglobal}\label{thm:continuity}
In the same setting as above, the measures $\mu_\psi$ depend weak-* continuously on $\psi$.
\end{thmglobal}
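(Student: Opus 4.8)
The plan is to reduce the weak-* continuity of the family $\mu_\psi$ on $\K \times G$ to a continuity statement for the base measures $\nu_\psi$ on $\K$, and then to read off that statement from the explicit expression for $\nu_\psi$ on the dynamical partitions obtained above. Since $G = \Z^m$ is discrete, its Haar measure is counting measure and any compactly supported continuous $f : \K \times G \to \R$ is supported on finitely many sheets $\K \times \{a\}$ with $a$ ranging over a finite set $A \subset \Z^m$. Hence
\[
\int f \, \ud\mu_\psi = \sum_{a \in A} e^{\psi(a)} \int_\K f(\cdot, a) \, \ud\nu_\psi ,
\]
and since each factor $e^{\psi(a)}$ is continuous in $\psi$, it suffices to show that $\psi \mapsto \int_\K g \, \ud\nu_\psi$ is continuous for every $g \in C(\K)$, once we fix the scaling of $\nu_\psi$ by normalising $\nu_\psi(\K) = 1$ (recall $\nu_\psi$ is non-atomic and finite on the bounded interval $\K$).

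First I would invoke the explicit formula derived above, which expresses $\nu_\psi(C)$ for each element $C$ of the level-$n$ dynamical partition in terms of the twisted Rauzy-Veech cocycle. By periodic type the renormalisation is governed by a single matrix $M(\psi)$ obtained by multiplying the $\psi$-twisted cocycle matrices over one period; its entries are finite sums of terms $e^{\psi(v)}$ over integer vectors $v$, and therefore depend real-analytically on $\psi$. The measure $\nu_\psi$ is determined by the positive Perron-Frobenius eigenvector $w(\psi)$ of $M(\psi)$, with the cylinder measures $\nu_\psi(C)$ given by explicit finite expressions built from the coordinates of $w(\psi)$.

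The next step is the continuity of the eigendata. As the underlying combinatorial matrix is primitive, $M(\psi)$ is eventually positive and its Perron-Frobenius eigenvalue is simple; by analytic perturbation theory for simple eigenvalues the leading eigenvalue and the normalised positive eigenvector $w(\psi)$ depend continuously on $M(\psi)$, hence on $\psi$. Consequently $\psi \mapsto \nu_\psi(C)$ is continuous for every cylinder $C$, and so $\psi \mapsto \int_\K h \, \ud\nu_\psi$ is continuous whenever $h$ is constant on the elements of a fixed level-$n$ dynamical partition, being a finite linear combination of cylinder measures.

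Finally I would pass from such step functions to arbitrary $g \in C(\K)$. Because the dynamical partitions refine and their mesh tends to zero, every $g \in C(\K)$ is a uniform limit of functions $g_n$ that are constant on level-$n$ cylinders. Since $\nu_\psi(\K) = 1$ uniformly in $\psi$,
\[
\Bigl| \int_\K g \, \ud\nu_\psi - \int_\K g_n \, \ud\nu_\psi \Bigr| \le \| g - g_n \|_\infty ,
\]
so the continuous functions $\psi \mapsto \int_\K g_n \, \ud\nu_\psi$ converge uniformly to $\psi \mapsto \int_\K g \, \ud\nu_\psi$, which is therefore continuous; combined with the reduction above this gives the weak-* continuity of $\mu_\psi$. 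I expect the main obstacle to lie not in this approximation, which is routine, but in pinning down the continuous (indeed analytic) dependence of the normalised eigenvector on $\psi$ — in particular in verifying simplicity of the Perron-Frobenius eigenvalue and in selecting a normalisation of $\nu_\psi$ that varies continuously — both of which rest on the explicit form of the twisted cocycle established earlier.
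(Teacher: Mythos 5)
Your proposal is correct and follows essentially the same route as the paper: both rest on the explicit formula expressing the measure of a level-$k$ cylinder via the Perron--Frobenius eigendata of the level-counting cocycle matrix $M(\lambda)$ (your ``twisted cocycle''), the continuity and simplicity of that eigendata in $\psi$, and the refinement of the dynamical partitions. Your version merely spells out more explicitly the reduction to the base measures $\nu_\psi$ and the uniform approximation by partition-measurable step functions, which the paper leaves implicit.
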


The proof of Theorem \ref{thm:main}, similarly to the result proved by Pollicott and Sharp in \cite{PS} exploits symbolic coding and relies 
on the result from \cite{ANSS} in the symbolic setting.
We want to stress that while \cite{PS} uses the classical Markov coding for a pseudo-Anosov,
one of the main contributions of this paper is in developing the symbolic
renormalisation dictionary for skew-products using Rauzy-Veech induction and the adic coding induced by it,
in the spirit of Bufetov (\cite{Bufetov}) and Lindsey-Trevi\~no (\cite{LT}). 
We emphasise that this point of view extends to all skew-products over IETs rather than just periodic type ones. 
While currently, to the best of our knowledge, the symbolic result of \cite{ANSS} has not yet been proven in the setting of non-stationary 
Markov chains, if it were to be proven in this non-stationary setting, this point of view could lead to an analogous measure 
classification for a much broader class of skew-products by a similar argument.

Another contribution of this paper is in introducing the \emph{level-counting cocycle}, an extension of the Rauzy-Veech cocycle that we believe 
will be a useful tool for studying skew-products over IETs. As an example application we give the proof of Theorem \ref{thm:continuity}.

\subsection{Structure of the paper} The outline of the paper is as follows:\\

\noindent Section \ref{sec:philosophy} is a brief and informal description of the philosophy of the proof of Theorem \ref{thm:main}.
\noindent Sections \ref{sec:iets} and \ref{sec:Bratteli} are preliminaries.
\begin{itemize}
\item In section \ref{sec:iets} we give the most important basic definitions for IETs and skew-products.
\item In section \ref{sec:Bratteli} we explain Bratteli diagrams and how they can be used to code IETs, following Bufetov (\cite{Bufetov}) and Lindsey-Trevi\~no
(\cite{LT}).
\end{itemize} 
Sections \ref{sec:symb}-\ref{sec:weak*} deal with periodic type skew-products.
\begin{itemize}
\item In section \ref{sec:symb} we explain how for periodic type IETs one can see renormalisation symbolically.
\item In section \ref{sec:ANSS} we apply the results of \cite{ANSS} to prove Theorem \ref{thm:main}.
\item In section \ref{sec:weak*} we introduce the level-counting cocycle and use it to give an explicit 
formula for Maharam measures and deduce Theorem \ref{thm:continuity}.
\end{itemize} 

\subsection{Acknowledgements}
I would like to thank my advisor Corinna Ulcigrai for introducing me to this subject and suggesting this problem,
and for her continuous support and guidance. I would also like to thank Albert Artiles for suggesting the question of weak-* continuity of Maharam 
measures. I would like to thank the referee for some very helpful comments on the first draft of this paper. 
This work was supported by the Swiss National Science Foundation through Grant 200021\_188617/1 and the Zurich Graduate School in 
Mathematics. 

\section{Idea of the proof of Theorem \ref{thm:main}}\label{sec:philosophy}
Here we briefly present the way we apply the symbolic result of \cite{ANSS} in the proof of Theorem \ref{thm:main}.

\subsection{The ANSS result}\label{subsec:ideaANSS}
The full statement of the result of \cite{ANSS} is given in section \ref{sec:ANSS}, but we give an abbreviated version here to convey the main idea.

Given a one-sided subshift of finite type $\sigma: \Sigma \to \Sigma$, one can consider the \textbf{tail equivalence relation}
\[\mathfrak{T}(\sigma) := \{(x,y) \in \Sigma^2\ |\  \exists n \geq 0: \sigma^n x = \sigma^n y\}.\]
In certain cases one can define a map $\tau:\Sigma \to \Sigma$ called the \textbf{adic map} (see section \ref{sec:Bratteli} for the definition) 
for which its \textbf{orbit equivalence relation}
\[\mathcal{O}(\tau) := \{(x,y) \in \Sigma^2\ |\  \exists n \in \Z: \tau^n x = y\}\]
satisfies
\[\mathcal{O}(\tau) = \mathfrak{T}(\sigma).\]

Heuristically one can think of this relationship meaning ``$\sigma$ renormalises $\tau$''. A prototype for this is the geodesic flow $g_t$ 
renormalising the horocycle flow $h_s$ on the unit tangent bundle of a hyperbolic surface $S$. In this case, as the flow has continuous time, the tail 
equivalence relation for $g_t$ identifies vectors whose orbits converge in the future, rather than agree after a finite number of steps as in 
the definition for a discrete time map. Precisely,
\[\mathfrak{T}(g_t) := \{(\xi,\xi') \in (T^1 S)^2: \lim_{t\to +\infty} d(g_t(\xi),g_t(\xi')) = 0\}.\]
The definition of a horocycle is exactly what guarantees $\mathcal{O}(h_s) = \mathfrak{T}(g_t)$.\\

Now given a map $f:\Sigma \to G$ one can consider the skew-product $\sigma_f: \Sigma\times G \to \Sigma\times G$. We know that $\sigma$ renormalises 
$\tau$, but what does $\sigma_f$ renormalise? One can answer this question as follows:

If one defines the \textbf{tail cocycle} $\phi_f: \Sigma \to G$,
\[\phi_f(x) := \sum_{i=0}^{\infty} \big(f(\sigma^i x ) - f(\sigma^i \tau x )\big),\]
then
\[\mathcal{O}(\tau_{\phi_f}) = \mathfrak{T}(\sigma_f).\]
Here $\tau_{\phi_f}:\Sigma \times G \to \Sigma \times G$ denotes the skew-product over $\tau$ with cocycle $\phi_f$ (c.f. \ref{eqn:skew-product}).
This equality of relations is explained in \cite{ANSS}, but for completeness we also give the proof of the equality in Appendix \ref{app:orb=tail}.\\

The theorem of \cite{ANSS} states that if $f$ is finite memory and aperiodic (see section \ref{sec:ANSS} for definitions), then the ergodic invariant 
Radon measures for the skew-product $\tau_{\phi_f}$ are exactly the Maharam measures.

\subsection{How to apply it}
We will show in section \ref{sec:Bratteli} that we can code an IET $T$ as an adic map $\tau$ on some shift space $\Sigma$, and the skew-product 
$T_\phi$ as the skew-product $\tau_{\phi}$ over the adic map.\\

The key step in the proof of Theorem \ref{thm:main} is in the construction of a cocycle $f:\Sigma \to G$ for which the skew-product $\sigma_f$ satisfies
\[\mathcal{O}(\tau_{\phi}) = \mathfrak{T}(\sigma_f),\]
from which it follows that $\mathcal{O}(\tau_{\phi_f}) = \mathcal{O}(\tau_{\phi})$. Hence the invariant sets, and so the ergodic measures are the same for 
$\tau_{\phi_f}$ and  $\tau_{\phi}$. So if we apply the theorem from \cite{ANSS} to $\tau_{\phi_f}$, we deduce that the ergodic invariant 
Radon measures for $\tau_\phi$ are precisely the Maharam measures, as desired.

We construct $f$ in section \ref{sec:symb} using the dictionary of symbolic renormalisation. In section \ref{sec:ANSS} we check that $f$ satisfies 
the required conditions and complete the proof.

\section{Background and definitions}\label{sec:iets}
\subsection{Interval Exchange Transformations}
\begin{defn} 
An \textbf{Interval Exchange Transformation} (IET for short) of $d$ intervals is a map $\F:[a,b)\to [a,b) $, 
which is a {right-continuous} piecewise 
orientation-preserving isometry, with all discontinuities contained in a set of $d-1$ marked points $\{a < x_1 <\dots < x_{d-1} < b\}$. 
(Note: for the purpose of this paper we allow $\F$ to be continuous at these marked points.) 
Let $x_0 = a, x_d = b$ and let $I_i$ be the interval $[x_{i-1},x_i)$ for $1\leq i\leq d$.

$\F$ satisfies \textbf{Keane's condition} if for any $0\leq i \leq d$, any $n\geq 1$, $\F^n(x_i)$ does not belong to the set of marked points 
$\{x_1,\dots,x_{d-1}\}$. Let $\mathcal{X}_d$ be the space of IETs of $d$ intervals satisfying Keane's condition. Let $\mathcal{X}^0_d \subset \mathcal{X}_d$ be 
the subspace of IETs defined on the unit interval $I = [0,1)$.
\end{defn}

\subsubsection{Compactification}\label{subsubsec:compactification}
For technical reasons, it is convenient to modify the definition of IETs somewhat so that an IET becomes a continuous mapping of a compact space. 
Here we follow the approach of \cite{MMY}, compactifying the interval to a Cantor set by doubling the orbits of singularities into left and right
copies. We remark that this idea first appeared in Keane's work in \cite{Keane75}.

More precisely, let $F \in \mathcal{X}^0_d$. Let $0 < x_1 < \dots x_{d-1} < 1$ be the marked points containing the set of discontinuities of $F$. 
Let $y_1 < \dots < y_{d-1}$ be the points $\{f(x_i): 0 \leq i \leq d-1\} \setminus \{0\}$.  For $n\geq 0$ let
\begin{align*}
D_n^+ &= \{F^{n}(y_i):1\leq i\leq d-1\},\\
D_n^- &= \{F^{-n}(x_i):1\leq i\leq d-1\}.
\end{align*}

As $F$ is Keane, these sets are disjoint from each other, and none of them contain $0$.
Let $D$ be the union $D = \bigsqcup_{n\geq 0} \big(D_n^+ \sqcup D_n^-\big)$.

Define the atomic measure $\rho$ by
\[\rho = \sum_{n\geq 0}\ \sum_{x\in D_n^+ \sqcup D_n^-} 2^{-n} \delta_x.\]

Define the increasing maps $l: [0,1] \to \R$ and $r:[0,1) \to \R$ by
\begin{align*}
l(x) &= \rho([0,x))\\
r(x) &= \rho([0,x]).
\end{align*}

Then $l(x) = r(x)$ for all $x\in I \setminus D$, whereas for $x\in D^\pm_n$, $r(x) = l(x) + 2^{-n}$. For $x\in D$, $l(x)$ and $r(x)$ are the split left 
and right copies of $x$.
Further, if $x<x'$, then $r(x) < l(x')$, where the inequality is strict by the minimality of $F$.

Finally, define 
\[K = l((0,1]) \cup r([0,1)) \subset \R.\]
Then $K$ is a Cantor set, with gaps $(l(x),r(x))$ for $x\in D$.\\

Note that the maps $l,r$ and the Cantor set $K$ all depend on the IET $F$ we started with. This dependence will always be taken to be implicit 
in this paper.

\begin{lemma}
There exists a homeomorphism $T: K\to K$ such that $T \circ l = l \circ F$.
\end{lemma}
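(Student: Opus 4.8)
The plan is to define $T$ on $K$ by pushing the two families of copies forward under $F$, and then to prove that the resulting map is a continuous bijection of a compact space, hence a homeomorphism. Concretely, Keane's condition makes $F$ minimal, so the orbits generating $D$ are dense and $\rho$ charges every subinterval; consequently $l$ is strictly increasing and the prescription $T\circ l = l\circ F$ leaves no freedom on left copies: one is forced to set $T(l(x)) = l(F(x))$, and it is natural to set $T(r(x)) = r(F(x))$ for $x \in [0,1)$, together with a value at the top point $l(1)$. With this definition the identity $T\circ l = l\circ F$ holds tautologically on $[0,1)$, so the entire content of the lemma is that $T$ is well defined, bijective and continuous. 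First I would record the regularity needed for this: $l$ is left-continuous and $r$ is right-continuous, $r(x) = l(x) + 2^{-n}$ on $D^\pm_n$, $K$ is closed and bounded hence compact, and a continuous bijection of $K$ onto itself is automatically a homeomorphism.

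I would then dispose of the generic points. Since $F$ is a bijection of $[0,1)$ and, as one checks directly from the definitions, $F^{-1}(D)\subseteq D\cup\{0\}$ and $F(D)\subseteq D\cup\{0\}$, the families of left and right copies are essentially permuted among themselves, so $T$ is a bijection away from finitely many exceptional copies. At every point of $[0,1)$ other than the discontinuities $x_1,\dots,x_{d-1}$ the map $F$ is continuous; there the one-sided limits of $F$ coincide, $l$ and $r$ transport them without ambiguity, and continuity of $T$ at the corresponding copies is immediate from the one-sided continuity of $l$ and $r$. Thus the proof reduces to a finite check at the copies of $x_1,\dots,x_{d-1}$ and at the two boundary copies associated to $0$ and to $l(1)$.

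The hard part, and the genuine reason for the doubling construction, is continuity at the left copies $l(x_i)$: here $F$ has a jump, whereas $l$ only feels the approach from the left, so a priori the value $l(F(x_i))$ assigned by the identity competes with the one-sided limit of $T$ taken inside $K$. The mechanism I would exploit is the precise bookkeeping built into $\rho$, namely that the left copies carry the backward orbits $D^-$ of the $x_i$ and the right copies the forward orbits $D^+$ of the $y_i$, with weights $2^{-n}$ arranged so that an $F$-image of a copy is again a copy of the same type and weight. For each $i$ I would locate the two copies $l(x_i) < r(x_i)$ and their images under $T$, and verify that the gap $(l(x_i),r(x_i))$ is carried onto a genuine gap of $K$, so that the one-sided limits of $T$ at $l(x_i)$ inside $K$ do agree with $l(F(x_i))$; the two boundary copies are handled by the same computation, with the value at $l(1)$ chosen to close up the permutation of copies. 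Once this finite continuity check is in place, $T$ is a continuous bijection of the compact set $K$, hence a homeomorphism, and $T\circ l = l\circ F$ holds on all of $[0,1)$ by construction, completing the proof.
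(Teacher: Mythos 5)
Your setup, your list of regularity facts, and your reduction of the problem to the exceptional copies are all sound, and your instinct that continuity at the left copies $l(x_i)$ is the real content of the lemma is exactly right. But the verification you propose there is of a false equality, and this is a genuine gap. With the prescription $T(l(x)) = l(F(x))$ at \emph{every} $x\in(0,1)$, the map is discontinuous at each $l(x_i)$ where $F$ genuinely jumps. Indeed, $l(x_i)$ is isolated from the right by the gap $(l(x_i),r(x_i))$, but it is a limit from the left: for $u \uparrow x_i$ one has $r(u)\to l(x_i)$ since $\rho((u,x_i))\to 0$. The images satisfy $T(r(u)) = r(F(u))$ with $F(u) \uparrow F(x_i^-) := \lim_{t\uparrow x_i}F(t)$, which is the top of the image interval $F(I_i)$ and hence lies in $D_0^+\cup\{1\}$; consequently $T(r(u)) \to l\big(F(x_i^-)\big)$. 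At an actual discontinuity $F(x_i^-)\neq F(x_i)$, so the left limit of $T$ at $l(x_i)$ is $l\big(F(x_i^-)\big)$, not $l(F(x_i))$. Your observation that the gap at $x_i$ is carried onto the gap at $F(x_i)$ is true for your $T$ but does not help: continuity at $l(x_i)$ is governed by the points of $K$ accumulating from the left, and their images cluster at a different gap. There is also a bijectivity failure that the single free value at $l(1)$ cannot repair: Keane's condition rules out $F(0)=0$, so there is exactly one $i_0\geq 1$ with $F(x_{i_0})=0$ (for a rotation by $\alpha$, $x_1 = 1-\alpha$), and then $T(l(x_{i_0})) = l(0) = 0 = r(0) = T(r(x_{i_0}))$: both copies of $x_{i_0}$ collapse to the minimum of $K$, while $l(1)$ is missed. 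The root cause is your claim that the identity $T\circ l = l\circ F$ ``leaves no freedom on left copies'': it only forces $T$ at continuity points of $F$, and demanding it at the jump points is incompatible with continuity of $T$.

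The repair is the prescription you already use at $l(1)$, applied at all $d-1$ marked points: set $T(l(x_i)) = l\big(F(x_i^-)\big)$, so that right copies are permuted by the right-continuous $F$ on $[0,1)$ and left copies by the left-continuous version of $F$ on $(0,1]$. With this convention injectivity holds, your finite continuity check goes through (the left limit computed above now matches the assigned value), and $T\circ l = l\circ F$ holds off the finite set $\{x_1,\dots,x_{d-1}\}$, which is the sense in which the lemma is used afterwards (e.g.\ for $m_K = l_*\Leb$ and for the sets $K^{(k)}_i$, whose description in the paper as including ``the left copy of the right endpoint'' confirms this convention). For comparison, the paper's proof is terser than yours: instead of pointwise checks plus compactness it proves the uniform bound $|T(x)-T(x')|\leq 2|x-x'|$ for $|x-x'|<1$ (valid because $T$ transports the atoms of $\rho$ while at most doubling their weights, and nearby points of $K$ cannot straddle a weight-one atom), and treats $T^{-1}$ symmetrically rather than invoking compactness; but as literally written it also states $T(l(x))=l(F(x))$ for all $x\in(0,1)$ and so glosses over the very point on which your proposal founders. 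Your decomposition is salvageable, but the definition of $T$ at the finitely many jump copies must be changed as above before the ``hard part'' can be carried out.
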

\begin{proof}
For $x\in (0,1)$, define $T(l(x)) = l(F(x))$ and $T(r(x)) = r(F(x))$. Define also $T(0) = T(r(0)) = r(F(0))$ and $T(l(1)) = l(\lim_{x\to 1^-} F(x))$. 

The map $T$ is well-defined, as for $x,x' \in (0,1)$, if $l(x) = r(x')$, then $x=x'$. Then $x\notin D$, 
and hence $F(x) \notin D$, meaning that $l(F(x)) = r(F(x))$. To see that $T$ is continuous, note 
that if $|x-x'| < 1$, then there is no $l(y)$ inbetween $x$ and $x'$ for $y\in D^-_0$, meaning that $F$ is continuous on 
$l^{-1}([x,x'])$, and hence $|T(x) - T(x')| \leq 2 |x-x'|$.

Similarly one shows that $T^{-1}$ is well-defined and continuous.
\end{proof}

Let $m_K$ be the measure on $K$ defined as the pushforward by $l$ of Lebesgue measure on $I$. Then $m_K$ is non-atomic and invariant by $T$.

From now on we will also refer to $T: K\to K$ as an IET, and this map will be the main focus of our study.

\subsubsection{Renormalisation for IETs}
A key tool for studying IETs is the Rauzy-Veech renormalisation $\mathcal{R}$. 
We do not give an explicit definition of Rauzy-Veech renormalisation here, only recalling some basic properties, and refer the reader to the many excellent 
introductions available in the literature, such as \cite{Viana}, \cite{Yoccoz}, \cite{Zorich}.

The \textbf{Rauzy-Veech induction} is a map $\hat{\mathcal{R}}:\mathcal{X}_d \to \mathcal{X}_d$. Given $\F:[a,b)\to [a,b)$,   
the Rauzy-Veech induction procedure
produces a sequence of shrinking nested intervals $[a,b)=I^{(0)} \supset I^{(1)} \supset I^{(2)} \supset \dots$, which all have the same left endpoint. 
The map $\hat{\mathcal{R}}^k(\F)$ is defined as the induced map of $\F$ on $I^{(k)}$, also denoted by $\F^{(k)}:I^{(k)}\to I^{(k)}$. 
The subintervals of $I^{(k)}$ permuted by $\F^{(k)}$ are labelled by $I^{(k)}_i$ for $1 \leq i \leq d$, where the order of these subintervals depends on $\F$.

The \textbf{Rauzy-Veech renormalisation} is a map $\mathcal{R}:\mathcal{X}^0_d \to \mathcal{X}^0_d$, defined by 
$\mathcal{R}(\F) = \frac{\1\F}{|\1I|}$.

For the continuous extension $T:K \to K$ of $F:I \to I$, if $I^{(k)} = [0,b_k)$, we define $K^{(k)} = l\big((0,b_k]\big) \cup r\big([0,b_k)\big)$. 
We define also the induced map $T^{(k)}: K^{(k)} \to K^{(k)}$, which satisfies $T^{(k)} \circ l = l \circ F^{(k)}$. Similarly define the subsets 
$K^{(k)}_i$ as the images of the intervals $I^{(k)}_i$, including the right copy of the left endpoint and the left copy of the right endpoint. This makes 
each $K_i^{(k)}$ a clopen subset of $K$.

\begin{defn}
For $0\leq r < k$ let $q^{(r,k)}_j$ be the return time of $T^{(r)}$ to $\K^{(k)}$ on $\K^{(k)}_j$, i.e. 
\[q^{(r,k)}_j = \min\Big\{n \geq 1 : \big(T^{(r)}\big)^n \Big( \K^{(k)}_j\Big) \subset \K^{(k)}\Big\}.\]

The \textbf{Rokhlin tower} $Z^{(r,k)}_j$ is the union 
\[Z^{(r,k)}_j = \bigsqcup_{l=0}^{q^{(r,k)}_j-1} \big(T^{(r)}\big)^l \left( \K^{(k)}_j \right).\]

Then by definition of the induced map and $q^{(r,k)}_j$, $\K^{(r)} = \bigsqcup_{j=1}^d Z^{(r,k)}_j$.

For $0 \leq l < q^{(r,k)}_j$ we call $\big(T^{(r)}\big)^l \left( \K^{(k)}_j \right)$ the $l^\text{th}$ \textbf{floor} of the tower $Z^{(r,k)}_j$, and $\K^{(k)}_j$ the \textbf{base}.
\end{defn}

\begin{figure}[H]
\centering
\begin{tikzpicture}[scale=7.5]
	\draw[red,line width=1] (0,0) -- (0.3,0);
	\draw[red,line width=1] (0.4,0) -- (0.6,0);
	\draw[red,line width=1] (0.7,0) -- (1.2,0);
	
	\draw[line width=1] (0,0.1) -- (0.3,0.1);
	\draw[line width=1] (0.4,0.1) -- (0.6,0.1);
	\draw[line width=1] (0.7,0.1) -- (1.2,0.1);
	
	\draw[line width=1] (0,0.2) -- (0.3,0.2);
	\draw[line width=1] (0.4,0.2) -- (0.6,0.2);
	\draw[line width=1] (0.7,0.2) -- (1.2,0.2);
	
	\draw[line width=1] (0,0.3) -- (0.3,0.3);
	\draw[line width=1] (0.4,0.3) -- (0.6,0.3);
	\draw[line width=1] (0.7,0.3) -- (1.2,0.3);
	
	\draw[line width=1] (0.4,0.4) -- (0.6,0.4);
	\draw[line width=1] (0.7,0.4) -- (1.2,0.4);
	
	\draw[line width=1] (0.4,0.5) -- (0.6,0.5);
	
	\draw[line width=1] (0.4,0.6) -- (0.6,0.6);
	
	\draw[->] (1.22,0.01) arc (-80:80:0.04);
	\draw[->] (1.22,0.11) arc (-80:80:0.04);
	\draw[->] (1.22,0.21) arc (-80:80:0.04);
	\draw[->] (1.22,0.31) arc (-80:80:0.04);
	
	\node at (1.34,0.05) {$T^{(r)}$};
	\node at (1.34,0.15) {$T^{(r)}$};
	\node at (1.34,0.25) {$T^{(r)}$};
	\node at (1.34,0.35) {$T^{(r)}$};
	
	\draw[->] (1.22,0.41) arc (80:-150:0.6 and 0.3);
	\node at (1.64,0.1) {$T^{(r)}$};
		
	\node[red] at (0.15,-0.05) {\small $\K^{(k)}_{1}$};
	\node[red] at (0.5,-0.06) {\small $\K^{(k)}_{2}$};
	\node[red] at (0.95,-0.05) {\small $\K^{(k)}_{3}$};
	
	\node at (0.95,0.46) {\small $\Big(T^{(r)}\Big)^{q^{(r,k)}_3-1} \Big(\K^{(k)}_{3}\Big)$};
	
	\draw[<->,dashed] (-0.07,0) -- (-0.07,0.3);
	\node[text width = 1.8 cm] at (-0.2,0.15) {$\#$ of floors \\ $= q_1^{(r,k)}$}; 
	
	\draw[blue] (0.38,-0.02) -- (0.62,-0.02) -- (0.62,0.62) -- (0.38,0.62) -- cycle;
	\node[blue] at (0.3,0.53) {$Z^{(r,k)}_2$};
	
\end{tikzpicture}
\caption{An example of towers for an IET on 3 intervals. The union of the bases is $\K^{(k)}$, and the union of all the floors  (including 
the base) is $\K^{(r)}$. From the towers we can not see exactly the image under $T^{(r)}$ of the top floor of a tower, but we do know that it gets 
sent into $\K^{(k)}$, hence somewhere in the union of the bases. }
\end{figure}
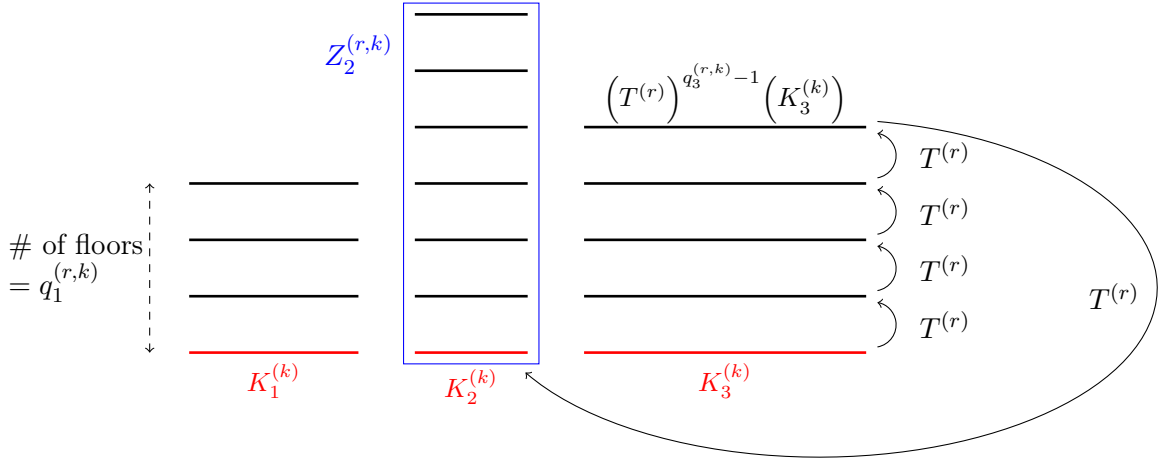

\begin{defn}
The \textbf{Rauzy-Veech cocycle} is defined as integer-valued $d\times d$ matrices $A^{(r,k)}$, with
\[A^{(r,k)}_{ij} = \#\left\{0\leq l < q^{(r,k)}_j: \big(T^{(r)}\big)^l \left(\K^{(k)}_j\right) \subset \K^{(r)}_i\right\}.\]
Thus $A^{(r,k)}_{ij}$ counts the number of floors in the tower $Z^{(r,k)}_j$ which are subsets of $\K^{(r)}_i$.\\

It is a cocycle since for $r<s<k$, $A^{(r,k)}_{ij} = \sum_{t=1}^d A^{(r,s)}_{it} A^{(s,k)}_{tj}$, and so $A^{(r,k)} = A^{(r,s)}A^{(s,k)}$.

\end{defn}

\begin{defn}
$T$ is \textbf{infinitely complete} if for any $i,j$,
\[ \lim_{n \to \infty} A^{(0,n)}_{ij} \to \infty.\]
\end{defn}

If $\mathcal{R}^N(T) = T$, then for any $k\geq 1 $, 
\[A^{(0,kN)}_{ij} = \big(A^{(0,N)}\big)^k_{ij},\]
hence $T$ is infinitely complete if $A^{(0,N)}$ is a positive matrix.

\begin{rem}
In fact for a finite IET $T$, if $T$ satisfies Keane's condition then $T$ is automatically infinitely complete. (See \cite{Yoccoz}.) 
\end{rem}

\subsection{Skew-products over IETs}
\begin{notation}
For a dynamical system $T:X \to X$, and a function $f$ on $X$, we will denote by $S^T_n f (x)$ the Birkhoff sum
\[S^T_n f(x) = \sum_{i=0}^{n-1} f(T^ix).\]
\end{notation}
\begin{defn}
Let $\F\in \mathcal{X}^0_d$ be a Keane IET of $d$ intervals. 
Let $G = \Z^m$ for some $m$, and let $\phi: I \to G$ be a function that is constant on $I_i$ for each $i$.
Then the map $\F_\phi: I \times G \to I \times G$ given by
\[ \F_\phi(x,a) = (\F(x),a+\phi(x))\]
is the \textbf{skew-product} over $\F$ with skewing cocycle $\phi$.
\end{defn}

We call $\phi$ a cocycle because $\F_\phi^n(x,a) = \big(\F^n(x),a+S^{\F}_n \phi (x)\big)$, where the Birkhoff sum $S^{\F}_n \phi (x)$ 
acts as a dynamical cocycle.

Without loss of generality we will always assume that the image of $\phi$ generates $G$ 
(otherwise the span of the image is isomorphic to $\Z^{m'}$ for some $m'$ and we can take $G$ to be this group). 
We will also treat $\phi$ as a $G$-valued $d$-vector, with $\phi_j = \phi\big|_{I_j}$.

{Given a $\phi: I \to G$ as above, define $\hat \phi: K \to G$ by $\hat \phi|_{K_i} = \phi_i$. 
Then for the extension $T:K \to K$, its skew-product $T_\phi: K \times G \to K\times G$ is defined analogously by 
$T_\phi(x,a) = (T(x),a+\hat \phi(x))$. From now on we will denote $\hat \phi$ also by $\phi: K \to G$.}

\subsubsection{Rauzy-Veech renormalisation for skew-products}
Let $\F_\phi:I\times G\to I\times G$ be a skew-product as above.

\begin{defn}If $\hat{\mathcal{R}}(\F)$ is the induced map of $\F$ on $\1I$, then the \textbf{Rauzy-Veech induction} $\hat{\widetilde{\mathcal{R}}}(\F_\phi)$ 
for $\F_\phi$ is defined by inducing on $\1I \times G$. We also write $(\F_\phi)^{(k)}$ to mean $\Big(\hat{\widetilde{\mathcal{R}}}\Big)^k (\F_\phi)$ .

The \textbf{Rauzy-Veech renormalisation} $\widetilde{\mathcal{R}}(\F_\phi)$ is defined by scaling 
back up to unit length in the first coordinate, i.e. if $\hat{\widetilde{\mathcal{R}}}(\F_\phi) (x,a) = (y,b)$, then
\[\widetilde{\mathcal{R}}(\F_\phi) (x,a) = \left(\frac{y}{|\1I|},b \right),\]
so that $\widetilde{\mathcal{R}}(\F_\phi)$ is a map from $I\times G$ to itself.
\end{defn}

Let us write down the relation between the Rauzy-Veech renormalisation for skew-products with the renormalisation for the base IET. We will need the 
following standard lemma expressing Birkhoff sums at special times via the action of the renormalisation matrix.
\begin{lemma}\label{lem:BS}
Let $\F$ be an IET, $\phi$ a $G$-valued $d$-vector as above. If $\nn\phi=(A^{(0,{n})})^T\phi$, 
then for $x\in \nn I_j$, $S^{\F}_{q^{(0,{n})}_j} \phi (x) = \nn\phi(x)$.
\end{lemma}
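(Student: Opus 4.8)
The plan is a direct computation that regroups the Birkhoff sum according to the dynamical partition into tower floors. First I would unwind the definition of $\nn\phi$: writing $A = A^{(0,n)}$, the $j$-th component of $A^T\phi$ is $\nn\phi_j = \sum_{i=1}^d A_{ij}\,\phi_i$, since applying the transpose contracts the row index $i$ of $A$ against $\phi$. Since $\nn\phi$ is, like $\phi$, treated as a vector that is constant on each piece, we have $\nn\phi(x)=\nn\phi_j$ for $x\in\nn I_j$, so it suffices to show $S^{\F}_{q^{(0,n)}_j}\phi(x)=\sum_{i} A_{ij}\phi_i$.

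Next I would recall the tower description of the return time. For $x\in \nn I_j$ the first return time to $\nn I$ is exactly $q^{(0,n)}_j$, and the orbit segment $x,\F x,\dots,\F^{q^{(0,n)}_j-1}x$ visits, one floor at a time, the floors $\F^l(\nn I_j)$, $0\le l<q^{(0,n)}_j$, of the Rokhlin tower $Z^{(0,n)}_j$. The structural fact I would use here is that each such floor is contained in a single subinterval $I_i$ of the original continuity partition; equivalently, the partition into floors refines $\{I_i\}$. This is a standard property of Rauzy-Veech induction: the induced map on $\nn I$ is again an IET, built precisely by following orbits that stay inside single continuity intervals until their return, so no floor can straddle a discontinuity of $\F$. (This is implicit already in the definition of the cocycle $A^{(0,n)}$, whose entries count floors contained in $I_i$ and must partition the floors of $Z^{(0,n)}_j$.)

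Given this, $\phi$ is constant along each floor: if the $l$-th floor lies in $I_i$ then $\phi(\F^l x)=\phi_i$, independently of $x\in\nn I_j$. Hence
\[
S^{\F}_{q^{(0,n)}_j}\phi(x)=\sum_{l=0}^{q^{(0,n)}_j-1}\phi(\F^l x)
=\sum_{i=1}^d \#\left\{0\le l<q^{(0,n)}_j: \F^l(\nn I_j)\subset I_i\right\}\phi_i .
\]
By the definition of the Rauzy-Veech cocycle (identical for $\F$ and for its continuous extension $T$, since $T\circ l = l\circ\F$), the counting coefficient is exactly $A^{(0,n)}_{ij}$, so the right-hand side equals $\sum_i A^{(0,n)}_{ij}\phi_i=\nn\phi_j=\nn\phi(x)$, which is the claim.

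The computation itself is routine bookkeeping of which floor lands in which interval, which is precisely what $A^{(0,n)}$ records. The only point requiring care, and hence the main (if modest) obstacle, is the justification that every tower floor sits inside a single interval $I_i$, so that $\phi$ takes a well-defined constant value along it and the Birkhoff sum collapses into the matrix entries $A^{(0,n)}_{ij}$; everything else follows by summation.
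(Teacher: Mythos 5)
Your proposal is correct and follows essentially the same route as the paper's proof: regroup the Birkhoff sum over the floors of the tower $Z^{(0,n)}_j$ according to which continuity interval $I_i$ each floor lies in, use constancy of $\phi$ on each $I_i$, and identify the resulting counts with the entries $A^{(0,n)}_{ij}$. The only difference is that you make explicit the (standard) fact that each floor sits inside a single $I_i$, which the paper leaves implicit in the definition of the cocycle.
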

\begin{proof}
Since $\phi(\F^l(x)) = \phi_i$ for $i$ such that $I_i \ni \F^l(x)$,
\begin{align*}
S^{\F}_{q^{(0,{n})}_j} \phi (x)&= \sum_{l=0}^{q^{(0,{n})}_j-1} \phi(\F^l(x))\\
&= \sum_{i=1}^d \phi_i \cdot \#\{0\leq l < q^{(0,{n})}_j: \F^l(\nn I_j) \subset I_i\}\\
&= \sum_{i=1}^d A^{(0,{n})}_{ij} \phi_i\\
&=\big((A^{(0,{n})})^{\F}\phi\big)_j = \nn\phi_j = \nn\phi(x).\qedhere
\end{align*}
\end{proof}

\begin{prop}\label{height cocycle}
Let $\F_\phi$ be a skew-product as above. If $\nn\phi=(A^{(0,{n})})^T\phi$, then
\[\hat{\widetilde{\mathcal{R}}} (\F_\phi) = \nn\F_{\nn\phi}\] and hence \[\widetilde{\mathcal{R}} (\F_\phi) = (\mathcal{R}\F)_{\nn\phi}.\]
\end{prop}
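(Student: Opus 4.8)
The plan is to compute the induced map of $\F_\phi$ on $\nn I \times G$ directly and to recognise it as the skew-product $\nn\F_{\nn\phi}$. The key observation is that the fibre coordinate plays no role in determining return times. Indeed, writing $\F_\phi^k(x,a) = (\F^k x,\, a + S^\F_k \phi(x))$, the second coordinate always lies in $G$, so $(x,a)$ lands in $\nn I \times G$ exactly when $\F^k x \in \nn I$. Hence the first return time of $(x,a)$ to $\nn I \times G$ under $\F_\phi$ equals the first return time of $x$ to $\nn I$ under $\F$, which for $x \in \nn I_j$ is precisely $q^{(0,n)}_j$.

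Given this, I would evaluate the induced map on a point $(x,a)$ with $x \in \nn I_j$:
\[\F_\phi^{q^{(0,n)}_j}(x,a) = \big(\F^{q^{(0,n)}_j}(x),\, a + S^\F_{q^{(0,n)}_j}\phi(x)\big).\]
The first coordinate is $\nn\F(x)$ by the definition of the induced IET $\nn\F$ as the return map of $\F$ to $\nn I$. For the second coordinate, Lemma \ref{lem:BS} gives $S^\F_{q^{(0,n)}_j}\phi(x) = \nn\phi(x)$ with $\nn\phi = (A^{(0,n)})^T\phi$. Thus the induced map is $(x,a) \mapsto (\nn\F(x),\, a + \nn\phi(x))$, which is exactly $\nn\F_{\nn\phi}(x,a)$, establishing the first equality.

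The second equality then follows by applying to the base the rescaling that turns induction into renormalisation. Dividing the first coordinate by $|\1I|$ carries the induced base map $\nn\F$ to the renormalised IET $\mathcal{R}\F$; since this rescaling affects only the base, and the skewing cocycle $\nn\phi$ is a locally constant $G$-valued vector that is simply carried along unchanged, we obtain $\widetilde{\mathcal{R}}(\F_\phi) = (\mathcal{R}\F)_{\nn\phi}$.

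I expect no genuine obstacle: the whole argument reduces to the return-time observation together with the already-established Lemma \ref{lem:BS}. The single point that warrants explicit care is the claim that return times to $\nn I \times G$ coincide with return times to $\nn I$ in the base — this is exactly where the group structure of the fibre enters, and I would spell it out rather than leave it implicit.
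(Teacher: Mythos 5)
Your proposal is correct and follows essentially the same route as the paper: observe that the return time of $(x,a)$ to $\nn I\times G$ coincides with the return time of $x$ to $\nn I$, evaluate $\F_\phi^{q^{(0,n)}_j}(x,a)$, identify the fibre increment via Lemma \ref{lem:BS}, and rescale the base coordinate. No differences of substance.
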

\begin{proof}

Suppose $(x,a)\in \nn I_j \times G$. For any $k$, 
$\F_\phi^k(x,a) = \big(\F^k(x),a+S^{\F}_k\phi (x)\big)$. Hence the return time of $(x,a)$ to $\nn I\times G$ under $\F_\phi$ is the same as the return time of $x$ to 
$\nn I$ under $\nn\F$, which is $q^{(0,{n})}_j$. By definition of the induced map, $\nn\F(x) = \F^{q^{(0,{n})}_j}(x)$.\\

Hence, applying Lemma \ref{lem:BS},
\begin{align*}
\hat{\widetilde{\mathcal{R}}} (\F_\phi) (x,a) &= \F_\phi^{q^{(0,{n})}_j}(x,a)\\
&= \left( \F^{q^{(0,{n})}_j}(x), a+ S^{\F}_{q^{(0,{n})}_j} \phi (x) \right)\\
&= \left(\nn\F(x),a+ S^{\F}_{q^{(0,{n})}_j} \phi (x) \right)\\
&= \left(\nn\F(x),a+ \nn \phi (x) \right)\\
&= \nn\F_{\nn\phi} (x,a).
\end{align*}
Hence $\hat{\widetilde{\mathcal{R}}} (\F_\phi) = \nn\F_{\nn\phi}$, and rescaling in the $x$ coordinate we see that 
$\widetilde{\mathcal{R}} (\F_\phi) = (\mathcal{R}\F)_{\nn\phi}$.
\end{proof}

{For the continuous extension $T_\phi: K \times G \to K \times G$, we denote by $\big(T_\phi\big)^{(n)}$ the induced map on $K^{(n)}\times G$. 
By Proposition \ref{height cocycle}, $\big(T_\phi\big)^{(n)}$ is equal to the skew-product $T^{(n)}_{\nn\phi}$.}

\begin{notation}
From now on we will always denote $(A^{(0,n)})^T\phi$ by $\nn \phi$.
\end{notation}
\begin{defn}
\label{def:periodic}
A skew-product $\F_\phi$, where $\F$ is an infinitely complete IET, is said to be of \textbf{periodic type} if it is a periodic point for 
$\widetilde{\mathcal{R}}$, i.e. if for some $N>0$, $\widetilde{\mathcal{R}}^N (\F_\phi) = \F_\phi$. 
(Cf. the notion of periodic type for IETs first defined in \cite{SinaiUlcigrai}.) {The continuous extension 
$T_\phi:K \to K$ of $F_\phi: I \to I$ is said to be of \textbf{periodic type} if $F_\phi$ is of periodic type.}
\end{defn}

\begin{rem}
It is a standard fact that for every loop $\gamma$ in a Rauzy diagram, if the Rauzy-Veech matrix $A$ corresponding to $\gamma$ is positive, then 
the Perron-Frobenius eigenvector of $A$ gives length data that defines a periodic type IET. If in addition $A$ has an eigenvalue equal to 1, then 
taking $\phi \in \Z^d$ to be a left eigenvector with eigenvalue 1, the skew-product $T_\phi$ is of periodic type. To have a $\Z^m$- skew-product of 
periodic type one requires an at least $m$-dimensional 1-eigenspace for $A^T$, hence these are rarer.
\end{rem}

\subsubsection{Towers for skew-products}

As with finite IETs, we can define towers for skew-products. Denote by $\K^{(k)}_{j,a}$ the set $\K^{(k)}_j \times \{a\} \subset \K \times G$.

Note that since $(T_\phi)^{(r)}$ is a skew-product over $T^{(r)}$, the return time of $(T_\phi)^{(r)}$ to $\K^{(k)}\times G$ on $\K^{(k)}_{j,a}$ is equal to $q^{(r,k)}_j$ independently of $a$.

Let ${\tilde Z}^{(r,k)}_{j,a}$ be the Rokhlin tower for $(T_\phi)^{(r)}$ with base $\K^{(k)}_{j,a}$, i.e.
\[{\tilde Z}^{(r,k)}_{j,a} = \bigsqcup_{l=0}^{q^{(r,k)}_j-1} \big((T_\phi)^{(r)}\big)^l \Big( \K^{(k)}_{j,a} \Big).\]

Since $T_\phi$, and hence $(T_\phi)^{(r)}$ commutes with translation in the $G$ component, we see that the towers ${\tilde Z}^{(r,k)}_{j,a}$ 
for different $a$ 
vary only by translation in the $G$ component. In fact, when projected onto the $\K$ component, each of these towers looks like $Z^{(r,k)}_j$, the tower 
for the original IET $T$.

As in the finite case, the Rokhlin towers give a dynamical partition

\[\K^{(r)} \times G = \bigsqcup_{a\in G} \bigsqcup_{j=1}^d {\tilde Z}^{(r,k)}_{j,a}.\]

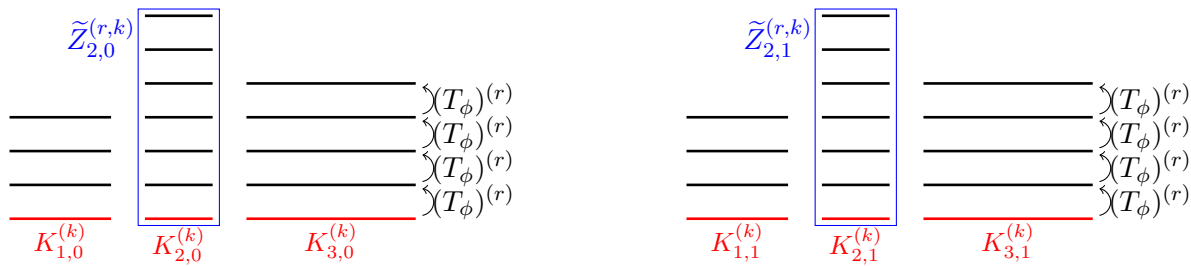
\begin{figure}[H]
\centering
\begin{tikzpicture}[scale=4.5]
	\draw[red,line width=1] (0,0) -- (0.3,0);
	\draw[red,line width=1] (0.4,0) -- (0.6,0);
	\draw[red,line width=1] (0.7,0) -- (1.2,0);
	
	\draw[line width=1] (0,0.1) -- (0.3,0.1);
	\draw[line width=1] (0.4,0.1) -- (0.6,0.1);
	\draw[line width=1] (0.7,0.1) -- (1.2,0.1);
	
	\draw[line width=1] (0,0.2) -- (0.3,0.2);
	\draw[line width=1] (0.4,0.2) -- (0.6,0.2);
	\draw[line width=1] (0.7,0.2) -- (1.2,0.2);
	
	\draw[line width=1] (0,0.3) -- (0.3,0.3);
	\draw[line width=1] (0.4,0.3) -- (0.6,0.3);
	\draw[line width=1] (0.7,0.3) -- (1.2,0.3);
	
	\draw[line width=1] (0.4,0.4) -- (0.6,0.4);
	\draw[line width=1] (0.7,0.4) -- (1.2,0.4);
	
	\draw[line width=1] (0.4,0.5) -- (0.6,0.5);
	
	\draw[line width=1] (0.4,0.6) -- (0.6,0.6);
	
	\draw[->] (1.22,0.01) arc (-80:80:0.04);
	\draw[->] (1.22,0.11) arc (-80:80:0.04);
	\draw[->] (1.22,0.21) arc (-80:80:0.04);
	\draw[->] (1.22,0.31) arc (-80:80:0.04);
	
	\node at (1.37,0.05) {$(T_\phi)^{(r)}$};
	\node at (1.37,0.15) {$(T_\phi)^{(r)}$};
	\node at (1.37,0.25) {$(T_\phi)^{(r)}$};
	\node at (1.37,0.35) {$(T_\phi)^{(r)}$};
		
	\node[red] at (0.15,-0.07) {\small $\K^{(k)}_{1,0}$};
	\node[red] at (0.5,-0.08) {\small $\K^{(k)}_{2,0}$};
	\node[red] at (0.95,-0.07) {\small $\K^{(k)}_{3,0}$};

	\draw[blue] (0.38,-0.02) -- (0.62,-0.02) -- (0.62,0.62) -- (0.38,0.62) -- cycle;
	\node[blue] at (0.27,0.53) {$\widetilde{Z}^{(r,k)}_{2,0}$};
	
	\draw[red,line width=1] (2,0) -- (2.3,0);
	\draw[red,line width=1] (2.4,0) -- (2.6,0);
	\draw[red,line width=1] (2.7,0) -- (3.2,0);
	
	\draw[line width=1] (2,0.1) -- (2.3,0.1);
	\draw[line width=1] (2.4,0.1) -- (2.6,0.1);
	\draw[line width=1] (2.7,0.1) -- (3.2,0.1);
	
	\draw[line width=1] (2,0.2) -- (2.3,0.2);
	\draw[line width=1] (2.4,0.2) -- (2.6,0.2);
	\draw[line width=1] (2.7,0.2) -- (3.2,0.2);
	
	\draw[line width=1] (2,0.3) -- (2.3,0.3);
	\draw[line width=1] (2.4,0.3) -- (2.6,0.3);
	\draw[line width=1] (2.7,0.3) -- (3.2,0.3);
	
	\draw[line width=1] (2.4,0.4) -- (2.6,0.4);
	\draw[line width=1] (2.7,0.4) -- (3.2,0.4);
	
	\draw[line width=1] (2.4,0.5) -- (2.6,0.5);
	
	\draw[line width=1] (2.4,0.6) -- (2.6,0.6);
	
	\draw[->] (3.22,0.01) arc (-80:80:0.04);
	\draw[->] (3.22,0.11) arc (-80:80:0.04);
	\draw[->] (3.22,0.21) arc (-80:80:0.04);
	\draw[->] (3.22,0.31) arc (-80:80:0.04);
	
	\node at (3.37,0.05) {$(T_\phi)^{(r)}$};
	\node at (3.37,0.15) {$(T_\phi)^{(r)}$};
	\node at (3.37,0.25) {$(T_\phi)^{(r)}$};
	\node at (3.37,0.35) {$(T_\phi)^{(r)}$};
		
	\node[red] at (2.15,-0.07) {\small $\K^{(k)}_{1,1}$};
	\node[red] at (2.5,-0.08) {\small $\K^{(k)}_{2,1}$};
	\node[red] at (2.95,-0.07) {\small $\K^{(k)}_{3,1}$};

	\draw[blue] (2.38,-0.02) -- (2.62,-0.02) -- (2.62,0.62) -- (2.38,0.62) -- cycle;
	\node[blue] at (2.27,0.53) {$\widetilde{Z}^{(r,k)}_{2,1}$};
	
\end{tikzpicture}
\caption{A few of the infinitely many towers for a skew-product over an IET on 3 intervals, here with $G = \Z$.
The union of the bases is $\K^{(k)} \times G$, and the union of all the floors  (including the base) is $\K^{(r)} \times G$.}
\end{figure}

\newpage
\section{Bratteli diagrams and adic maps}\label{sec:Bratteli}
In this section we define the coding for IETs that we will use.
This section is based on the expositions in \cite{Bufetov} and \cite{LT}.
\subsection{Bratteli diagrams}

\begin{defn}
A \textbf{Bratteli diagram} is an infinite directed graph with vertex set $V$ and edge set $\mathcal{E}$, which are partitioned into finite sets 
$V = \bigsqcup_{k\geq 0} V_k$ and $\mathcal{E} = \bigsqcup_{k\geq 1} \mathcal{E}_k$, such that if $s,t:\mathcal{E} \to V$ are the source and target maps, then
$s(\mathcal{E}_k) = V_{k-1}$ and $t(\mathcal{E}_k) = V_k$.\\
This means that the edges in $\mathcal{E}_k$ go from a vertex in $V_{k-1}$ to a vertex in $V_k$, but 
also that for all $k$ each vertex in $V_k$ is the source for at least one edge, and for $k>0$ also the target of at least one edge.

An infinite sequence of edges $(e_1,e_2,\dots) \in \Pi_{k\geq 1} \mathcal{E}_k$ is called \textbf{admissible} if for every $k\geq 1$, $t(e_k) = s(e_{k+1})$. 
Let $\Sigma$ be the set of all admissible sequences, which correspond to infinite paths in the Bratteli diagram.

Similarly define $\Sigma_k$ as the space of finite admissible paths from $V_0$ to $V_k$. For such a path $p = (e_1,\dots,e_k)\in \Sigma_k$
let $s(p) = s(e_1)$ and $t(p) = t(e_k)$.
\end{defn}

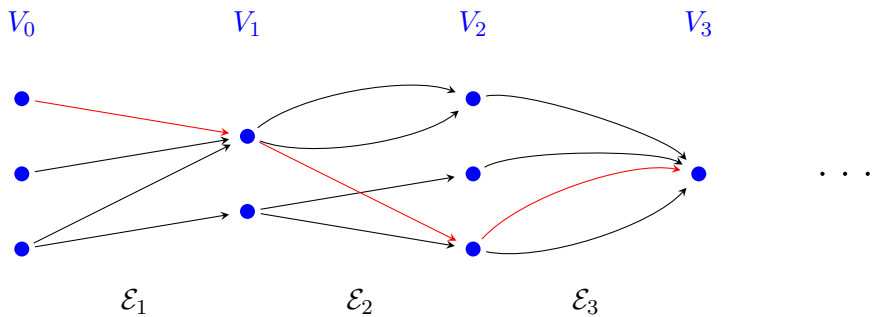
\begin{figure}[H]
\centering
\begin{tikzpicture}[scale=1,->,>=stealth,shorten > = 4pt,shorten < = 2pt]
	\node[circle, fill=blue, inner sep = 2pt] at (0,0) (00) {};
	\node[circle, fill=blue, inner sep = 2pt] at (0,1) (01) {};
	\node[circle, fill=blue, inner sep = 2pt] at (0,2) (02) {};
	
	\node[circle, fill=blue, inner sep = 2pt] at (3,0.5) (10) {};
	\node[circle, fill=blue, inner sep = 2pt] at (3,1.5) (11) {};
	
	\node[circle, fill=blue, inner sep = 2pt] at (6,0) (20) {};
	\node[circle, fill=blue, inner sep = 2pt] at (6,1) (21) {};
	\node[circle, fill=blue, inner sep = 2pt] at (6,2) (22) {};
	
	\node[circle, fill=blue, inner sep = 2pt] at (9,1) (3) {};
	
	\path
	(00) edge (11)
	(01) edge (11)
	(02) edge[red] (11)
	(00) edge (10);
	
	\path 
	(10) edge (20)
	(10) edge (21)
	(11) edge[red] (20)
	(11) edge[bend left,looseness=0.7] (22)
	(11) edge[bend right,looseness=0.7] (22);
	
	\path 
	(22) edge[bend left,looseness=0.5] (3)
	(21) edge[bend left,looseness=0.5] (3)
	(20) edge[bend left,looseness=0.7,red] (3)
	(20) edge[bend right,looseness=0.7] (3);
	
	\node[] at (11,1) {\huge\dots};
	
	\node[blue] at (0,3) {$V_0$};
	\node[blue] at (3,3) {$V_1$};
	\node[blue] at (6,3) {$V_2$};
	\node[blue] at (9,3) {$V_3$};
	
	\node[] at (1.5,-0.7) {$\mathcal{E}_1$}; 
	\node[] at (4.5,-0.7) {$\mathcal{E}_2$}; 
	\node[] at (7.5,-0.7) {$\mathcal{E}_3$};

\end{tikzpicture}
\caption{An example of a Bratteli diagram, with a path in $\Sigma_3$ highlighted in red.}
\end{figure}

Without loss of generality we assume that for each $k$, $|\mathcal{E}_k| >1$. Indeed, as we are interested in studying paths in the graph, were there only a 
single edge between $V_{k-1}$ and $V_k$, we would lose no information by collapsing it. 

\begin{defn}
A \textbf{partially ordered Bratteli diagram} is a Bratteli diagram together with a partial order $<$ on $\mathcal{E}$, under which two edges are comparable 
if and only if they have the same target. 

Such a partial order induces a \textbf{lexicographic partial order} on $\Sigma$, with two infinite paths 
$p_1=(e_1,e_2,\dots)$ and $p_2=(f_1,f_2,\dots)$ comparable under $<$ if and only if they have the same tail, meaning that for some 
$K$, for all $k > K, e_k = f_k$. 
If $p_1$ and $p_2$ have the same tail, and $K$ is minimal, i.e. $e_K \neq f_K$, then we say $p_1 < p_2$ if and only if $e_K < f_K$.

An edge $e_k\in \mathcal{E}_k$ is said to be maximal under $<$ if for all other edges $e'$ that are comparable to it, $e > e'$.
An infinite path $p = (e_1,e_2,\dots)$ is said to be \textbf{maximal} under $<$ if every edge $e_k$ of the path is maximal. These are precisely the paths that 
are maximal under the lexicographic order.
Let $\Sigma_{max} \subset \Sigma$ be the space of all maximal paths.

Similarly an edge is said to be \textbf{minimal} if it is minimal under the lexicographic order, and a path is minimal if all its edges are minimal. 
Let $\Sigma_{min} \subset \Sigma$ be the space of all minimal paths.

\end{defn}

\begin{defn}
The \textbf{adic map} (also known as Vershik map) $\tau: \Sigma\setminus \Sigma_{max} \to \Sigma$ is the map which sends a path $p$ to the smallest path $p'$ 
for which $p' > p$, i.e.

\[\tau(p) = \min\{q\in \Sigma: q > p\}.\]
\end{defn}

\begin{prop}
The minimum in the definition of the adic map is always attained, hence $\tau$ is well-defined.
\end{prop}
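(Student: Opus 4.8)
The plan is to prove well-definedness by exhibiting the minimiser explicitly and then verifying it really is the minimum. Fix a non-maximal path $p=(e_1,e_2,\dots)$. Since $p\notin\Sigma_{max}$, some edge of $p$ fails to be maximal; let $k_0=\min\{k:e_k\text{ is not maximal}\}$, so that $e_1,\dots,e_{k_0-1}$ are all maximal while $e_{k_0}$ is not. I would define a candidate successor $q=(f_1,f_2,\dots)$ by: (i) keeping the tail, $f_k=e_k$ for all $k>k_0$; (ii) at level $k_0$, replacing $e_{k_0}$ by its immediate successor among the edges with the same target, i.e.\ $f_{k_0}=\min\{e\in\mathcal{E}_{k_0}:t(e)=t(e_{k_0}),\ e>e_{k_0}\}$; and (iii) below level $k_0$, letting $(f_1,\dots,f_{k_0-1})$ be the minimal admissible path from $V_0$ to the vertex $s(f_{k_0})$.

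Two auxiliary facts make this well-posed. First, because two edges are comparable exactly when they share a target, the edges with any fixed target form a finite totally ordered set; hence the successor in (ii) exists precisely because $e_{k_0}$ is not maximal, and every vertex has a unique minimal incoming edge. Second, since each vertex of $V_j$ with $j\geq 1$ is the target of at least one edge, one can trace back minimal incoming edges from $s(f_{k_0})$ to $V_0$, producing the path in (iii); choosing minimal edges at every level makes this the lexicographically smallest admissible path ending at $s(f_{k_0})$. Admissibility of $q$ is then immediate: the tail agrees with $p$, one has $t(f_{k_0})=t(e_{k_0})=s(e_{k_0+1})=s(f_{k_0+1})$, and the lower segment is admissible by construction with the correct endpoint $s(f_{k_0})$.

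It remains to check that $q>p$ and that $q$ is minimal among such paths. For the first, note $q$ and $p$ share a tail and their last index of disagreement is exactly $k_0$ (they agree above $k_0$, and $f_{k_0}>e_{k_0}$); by the definition of the lexicographic order this gives $q>p$. For minimality I would take an arbitrary $q'=(g_1,g_2,\dots)>p$ and let $L$ be the last index where $q'$ and $p$ differ, so $g_L>e_L$ and $g_k=e_k$ for $k>L$. Since $e_k$ is maximal for $k<k_0$, no edge sharing its target can exceed it, forcing $L\geq k_0$. If $L>k_0$ then $q'$ and $q$ agree above $L$ and $g_L>e_L=f_L$, so $q'>q$. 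If $L=k_0$ then, by admissibility, $g_{k_0}$ is an edge $>e_{k_0}$ with target $t(e_{k_0})$, hence $g_{k_0}\geq f_{k_0}$ by the choice of successor; when the inequality is strict we again get $q'>q$, and when $g_{k_0}=f_{k_0}$ the two paths agree from level $k_0$ upward while below level $k_0$ the segment of $q$ is the minimal path to $s(f_{k_0})$, so $q\leq q'$. In every case $q\leq q'$, so $q=\min\{q'\in\Sigma:q'>p\}=\tau(p)$.

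I expect the main subtlety to be the bookkeeping around the fact that the lexicographic order compares paths at their deepest (largest) index of disagreement rather than the shallowest, so that making $q$ small forces the lower segment to be completed by the minimal path while the decisive change happens at the single non-maximal level $k_0$. The case analysis on $L$ relative to $k_0$ is where this must be handled carefully, but each case reduces to the totality of the edge order within a fixed target.
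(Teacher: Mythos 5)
Your proof is correct and follows essentially the same route as the paper: identify the first non-maximal edge $e_{k_0}$, replace it by its immediate successor, fill in below with minimal edges, and then do the case analysis on where the last index of disagreement of a competitor sits relative to $k_0$. The only cosmetic difference is that you construct the minimiser explicitly inside the proof (which the paper instead records as a remark immediately after, obtaining the minimiser in the proof itself as the minimum of the finite totally ordered set of paths agreeing with $p$ after $k_0$); the content is the same.
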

\begin{proof}
Let $p = (e_1,e_2,\dots)\in \Sigma\setminus\Sigma_{max}$.
Suppose that $q = (f_1,f_2,\dots)\in \Sigma$ is comparable with $p$. Then there is some minimal $K$, such that for all $k>K$, $e_k=f_k$. 
Say in this case that $p$ and $q$ 
agree after $K$. By definition of lexicographic order, if $p$ and $q$ agree after $K$, then $p >q$ if and only if $e_K>f_K$, and conversely.

As $p$ is not maximal, there exists a smallest $n$ such that $e_n$ is not maximal. There are finitely many paths that agree with $p$ after $n$, of which $p$ 
can not be the maximal under $<$ (since there exists some edge $e\in \mathcal{E}_n$ with $e > e_n$, and any path $q = (f_1,\dots,f_{n-1},e,e_{n+1},\dots)$ 
satisfies $q>p$). Let $p'$ be the minimal of these paths that agree with $p$ after $n$ and that are greater than $p$ under $<$.

Any path $q = (f_1,f_2,\dots)$ which agrees with $p$ after $m$ for some $m<n$ has $q<p$, since $e_m$ is maximal, so $f_m < e_m$. 

If a path $q = (f_1,f_2,\dots)$ agrees with $p$ after $m$ for some $m>n$ and $q>p$, then $f_m > e_m$. Then $p'$ also agrees with $q$ after $m$, 
and so $q>p'$.

Thus $p'$ is the minimal path greater than $p$ under $<$, hence $\tau(p) = p'$.
\end{proof}

In fact, if for $p = (e_1,\dots)$, $n$ is minimal such that $e_n$ is not a maximal edge, then $\tau(p) = (f_1,f_2,\dots, f_n,e_{n+1},\dots)$, where:
\begin{itemize}
\item $f_n$ is the minimal edge greater than $e_n$,
\item For $k<n$, $f_k = \min\{e\in \mathcal{E}_k: t(e) = s(f_{k+1})\}$.
\end{itemize}

\begin{eg}
Consider the Bratteli diagram with $V_k$ consisting of a single vertex for each $k$, and $\mathcal{E}_k$ having two edges for each $k$, one labelled $0_k$ 
and the other labelled $1_k$. Let $<$ be the partial order $0_k < 1_k$ for all $k$. 

The set of infinite paths in this Bratteli diagram corresponds to 2-adic integers. Indeed, we can represent any 2-adic integer
\[x = \sum_{i=1}^{\infty} \xi_i 2^i\]
by the path 
\[p(x) = (\xi_1,\xi_2,\xi_3,\dots) \in \Sigma.\]

Then $\tau$ corresponds to the addition of 1,
\[\tau(p(x)) = p(x+1).\]
In this case $\tau$ is also known as the 2-adic odometer map.
\bigskip\\

\begin{figure}[H]
\centering
\begin{tikzpicture}[scale=1,->,>=stealth,shorten > = 4pt,shorten < = 2pt, node distance = 2cm, vertex/.style={circle,inner sep=2pt,fill=blue}]
	\node[vertex] (0) {};
	\node[vertex] (1) [right of=0] {};
	\node[vertex] (2) [right of=1] {};
	\node[vertex] (3) [right of=2] {};
	\node[vertex] (4) [right of=3] {};
	\node[vertex] (5) [right of=4] {};
	\node[vertex] (6) [right of=5] {};
	
	\path
	(0) edge[bend left,red] node[above]{1} (1)
	(0) edge[bend right] node[below]{0} (1);
	
	\path
	(1) edge[bend left] node[above]{1} (2)
	(1) edge[bend right,red] node[below]{0} (2);
	
	\path
	(2) edge[bend left,red] node[above]{1} (3)
	(2) edge[bend right] node[below]{0} (3);
	
	\path
	(3) edge[bend left] node[above]{1} (4)
	(3) edge[bend right,red] node[below]{0} (4);
	
	\path
	(4) edge[bend left] node[above]{1} (5)
	(4) edge[bend right,red] node[below]{0} (5);
	
	\path
	(5) edge[bend left] node[above]{1} (6)
	(5) edge[bend right,red] node[below]{0} (6);
	
	\node[] [right of=6] {\huge\dots};

\end{tikzpicture}
\caption{The Bratteli diagram corresponding to 2-adic integers. The path highlighted in red, continued with all 0s, corresponds to 
$2^0 + 2^2 = 5$.}
\end{figure}
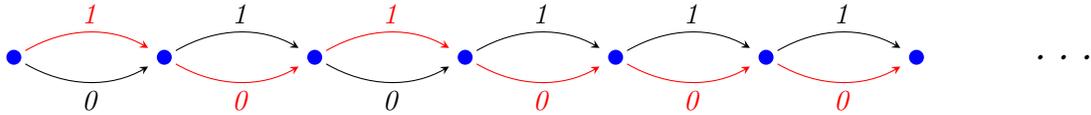

\end{eg}

\newpage
\subsection{Coding IETs by adic maps}
\subsubsection{Bratteli diagrams for IETs}
Fix an infinitely complete $d$-IET $T$.
Let $(n_k)$ be an increasing sequence, let $J^{(k)}=\K^{(n_k)}$, and $J^{(k)}_j=\K^{(n_k)}_j$ for $j=1,\dots,d$.   
Assume that $(n_k)$ is chosen such that for any $k$, 
$A^{(n_k,n_{k+1})}$ is a positive matrix (it suffices for $n_{k+1}-n_k$ to be large enough by infinite-completeness). 

Let $T_k = T^{(n_k)}$ be the induced map on $J^{(k)}$. 
For all $j,k$ let $Z^{(k)}_j = Z^{(n_{k-1},n_k)}_j$ be the tower for $T_{k-1}$ on $J^{(k)}_j$.
 Let $q^{(k)}_j = q^{(n_{k-1},n_k)}_j$ be the height of this tower. 

\begin{defn}
We define a Bratteli diagram for $T$ corresponding to the sequence $(n_k)$ as follows:
\begin{itemize}
\item Vertex sets: $V_k = \{1,...,d\}$ for $k\geq 0$.
\item The edge set $\mathcal{E}_k$ corresponds to the set of floors of the towers $Z^{(k)}_j$ for $j=1,\dots,d$. Formally:
\[\mathcal{E}_k = \{(j,l):1\leq j \leq d, 0 \leq l < q^{(k)}_j\}.\]
Here we should interpret the edge $(j,l)\in\mathcal{E}_k$ as corresponding to the floor $T_{k-1}^l(J^{(k)}_j)$.
\item For $e = (j,l)\in \mathcal{E}_k$, we define $t(e)=j$. We define $s(e) = i$, such that $T_{k-1}^l(J^{(k)}_j) \subset J^{(k-1)}_i$.
\item The partial order: two edges $(j,l)$ and $(j',l')$ in $\mathcal{E}_k$ are comparable if and only if their targets match, i.e. if and only if $j=j'$. 
(Meaning the edges correspond to floors in the same tower). 
Then we say the edge corresponding to the higher floor in the tower is greater, i.e.
\[(j,l) > (j,l') \iff l>l'.\]

\end{itemize}
\end{defn}

We describe now how finite paths in this Bratteli diagram correspond to floors in Rokhlin towers for $T$.

For $p_k = ((j_1,l_1),(j_2,l_2),\dots)\in \Sigma_k$, let

\[J(p_k) = T^{l_1} \circ T_1^{l_2} \circ \dots \circ T_{k-1}^{l_k} \Big(J^{(k)}_{j_k}\Big).\]

Informally, we can think of $T,T_1,\dots,T_{k-1}$ as `elevators' of different speed that go up the towers $Z^{(k)}_j$. $T$ is the slowest elevator which stops 
at every floor, whereas $T_1$ only stops on floors that are subsets of $\1J$, $T_2$ is even faster and stops more rarely, and $T_{k-1}$ is the express 
elevator that only stops on the floors that are parts of $J^{(k-1)}$.

To get from the base $J^{(k)}_j$ to some specific floor $F$ in the fastest way, one first takes the express elevator to 
the last floor belonging to $J^{(k-1)}$ which is below $F$, then one needs to switch to the slower elevator $T_{k-2}$ and take it
to its last stop before $F$, and so on until one arrives at $F$ by taking $T$.

If we denote by $l_r$ the number of stops we have taken $T_{r-1}$ for, then
\[F = T^{l_1} \circ T_1^{l_2} \circ \dots \circ T_{k-1}^{l_k} \Big(J^{(k)}_{j}\Big).\]

Defining $p_k = ((j_1,l_1),\dots,(j_k,l_k))$ with $j_k = j$ and $j_{r-1} = s((j_r,l_r))$ for $r<k$, we see that $F = J(p_k)$.

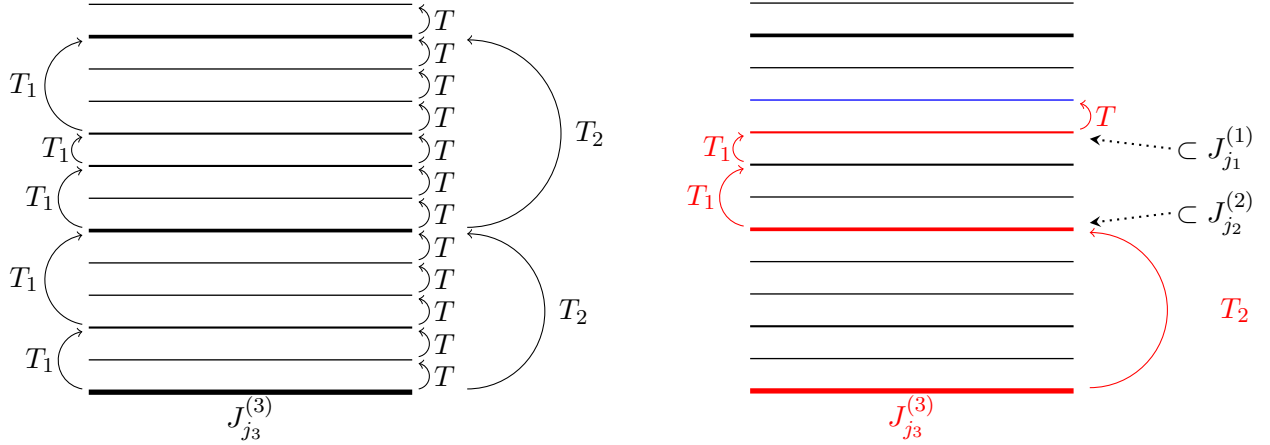
\begin{figure}[H]
\centering
\begin{minipage}{0.45\textwidth}
\begin{tikzpicture}[scale=4.3]
	\draw[line width=2] (0,0) -- (1,0);
	\draw[line width=0.5] (0,0.1) -- (1,0.1);
	\draw[line width=0.8] (0,0.2) -- (1,0.2);
	\draw[line width=0.5] (0,0.3) -- (1,0.3);
	\draw[line width=0.5] (0,0.4) -- (1,0.4);
	\draw[line width=1.4] (0,0.5) -- (1,0.5);
	\draw[line width=0.5] (0,0.6) -- (1,0.6);
	\draw[line width=0.8] (0,0.7) -- (1,0.7);
	\draw[line width=0.8] (0,0.8) -- (1,0.8);
	\draw[line width=0.5] (0,0.9) -- (1,0.9);
	\draw[line width=0.5] (0,1) -- (1,1);
	\draw[line width=1.4] (0,1.1) -- (1,1.1);
	\draw[line width=0.5] (0,1.2) -- (1,1.2);

	\draw[->] (1.02,0.01) arc (-80:80:0.04);
	\draw[->] (1.02,0.11) arc (-80:80:0.04);
	\draw[->] (1.02,0.21) arc (-80:80:0.04);
	\draw[->] (1.02,0.31) arc (-80:80:0.04);
	\draw[->] (1.02,0.41) arc (-80:80:0.04);
	\draw[->] (1.02,0.51) arc (-80:80:0.04);
	\draw[->] (1.02,0.61) arc (-80:80:0.04);
	\draw[->] (1.02,0.71) arc (-80:80:0.04);
	\draw[->] (1.02,0.81) arc (-80:80:0.04);
	\draw[->] (1.02,0.91) arc (-80:80:0.04);
	\draw[->] (1.02,1.01) arc (-80:80:0.04);
	\draw[->] (1.02,1.11) arc (-80:80:0.04);
	
	\node at (1.1,0.05) {$T$};
	\node at (1.1,0.15) {$T$};
	\node at (1.1,0.25) {$T$};
	\node at (1.1,0.35) {$T$};
	\node at (1.1,0.45) {$T$};
	\node at (1.1,0.55) {$T$};
	\node at (1.1,0.65) {$T$};
	\node at (1.1,0.75) {$T$};
	\node at (1.1,0.85) {$T$};
	\node at (1.1,0.95) {$T$};
	\node at (1.1,1.05) {$T$};
	\node at (1.1,1.15) {$T$};

	\draw[->] (-0.02,0.01) arc (260:100:0.09);
	\draw[->] (-0.02,0.21) arc (260:100:0.14);
	\draw[->] (-0.02,0.51) arc (260:100:0.09);
	\draw[->] (-0.02,0.71) arc (260:100:0.04);
	\draw[->] (-0.02,0.81) arc (260:100:0.14);
	
	\node at (-0.15,0.1) {$T_1$};
	\node at (-0.2,0.35) {$T_1$};
	\node at (-0.15,0.6) {$T_1$};
	\node at (-0.1,0.75) {$T_1$};
	\node at (-0.2,0.95) {$T_1$};
	
	\draw[->] (1.17,0.01) arc (-90:90:0.24);
	\node at (1.5,0.25) {$T_2$};
	\draw[->] (1.17,0.51) arc (-90:90:0.29);
	\node at (1.55,0.8) {$T_2$};
	
	\node at (0.5,-0.08) {$J^{(3)}_{j_3}$};
\end{tikzpicture}
\end{minipage}
\hspace{0.08\textwidth}
\begin{minipage}{0.45\textwidth}
\begin{tikzpicture}[scale=4.3]
	\draw[line width=2,red] (0,0) -- (1,0);
	\draw[line width=0.5] (0,0.1) -- (1,0.1);
	\draw[line width=0.8] (0,0.2) -- (1,0.2);
	\draw[line width=0.5] (0,0.3) -- (1,0.3);
	\draw[line width=0.5] (0,0.4) -- (1,0.4);
	\draw[line width=1.4,red] (0,0.5) -- (1,0.5);
	\draw[line width=0.5] (0,0.6) -- (1,0.6);
	\draw[line width=0.8] (0,0.7) -- (1,0.7);
	\draw[line width=0.8,red] (0,0.8) -- (1,0.8);
	\draw[line width=0.5,blue] (0,0.9) -- (1,0.9);
	\draw[line width=0.5] (0,1) -- (1,1);
	\draw[line width=1.4] (0,1.1) -- (1,1.1);
	\draw[line width=0.5] (0,1.2) -- (1,1.2);
	
	\draw[->,red] (1.02,0.81) arc (-80:80:0.04);

	\node[red] at (1.1,0.85) {$T$};

	\draw[->,red] (-0.02,0.51) arc (260:100:0.09);
	\draw[->,red] (-0.02,0.71) arc (260:100:0.04);
	
	\node[red] at (-0.15,0.6) {$T_1$};
	\node[red] at (-0.1,0.75) {$T_1$};
	
	\draw[->,red] (1.05,0.01) arc (-90:90:0.24);
	\node[red] at (1.5,0.25) {$T_2$};
	
	\node[red] at (0.5,-0.08) {$J^{(3)}_{j_3}$};

	\draw[->,>=stealth,dotted,line width = 0.8] (1.3,0.55) -- (1.05,0.52);
	\draw[->,>=stealth,dotted,line width = 0.8] (1.3,0.75) -- (1.05,0.78);
	
	\node at (1.44,0.55) {$\subset J^{(2)}_{j_2}$};
	\node at (1.44,0.75) {$\subset J^{(1)}_{j_1}$};

\end{tikzpicture}
\end{minipage}
\caption{The elevator on a tower $Z^{(0,3)}_{j_3}$. The blue floor is $T^1\circ T_1^2 \circ T_2^1 \Big( J^{(3)}_{j_3}\Big)$. 
Hence it is $J((j_1,1),(j_2,2),(j_3,1))$ for appropriate $j_1,j_2$.}
\end{figure}

\begin{lemma}\label{paths are floors}
For any $p_k\in \Sigma_k$, $J(p_k)$ is a floor in the tower $Z^{(0,n_k)}_{t(p_k)}$.

Conversely, for every floor $F$ of the tower $Z^{(0,n_k)}_j$, there exists a unique $p_k \in \Sigma_k$ such that $J(p_k) = F$.
\end{lemma}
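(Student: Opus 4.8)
The plan is to prove both assertions at once, by induction on $k$, showing that $p_k \mapsto J(p_k)$ restricts to a bijection from $\{p_k \in \Sigma_k : t(p_k) = j\}$ onto the set of floors of $Z^{(0,n_k)}_j$. The driving observation is that each $T_r = T^{(n_r)}$ is a first-return map of $T$, so that a power $T_r^{\,l}$ applied to an interval of $T^{(n_r)}$ (on which the successive return times to $\K^{(n_r)}$ are constant) coincides with a single power $T^N$. Since each $J^{(k)}_j = \K^{(n_k)}_j$ is an interval of $T^{(n_k)}$, and hence sits inside a single interval of every coarser induced map, all the compositions defining $J(p_k)$ will collapse to honest powers of $T$ on the base interval $J^{(k)}_j$.

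I would therefore strengthen the inductive statement to record this collapse: let $P(k)$ assert that for each $p_k = ((j_1,l_1),\dots,(j_k,l_k))$ with $t(p_k)=j$ there is a unique integer $0 \le N(p_k) < q^{(0,n_k)}_j$ with $T^{l_1} \circ T_1^{l_2} \circ \cdots \circ T_{k-1}^{l_k}\big|_{J^{(k)}_j} = T^{N(p_k)}\big|_{J^{(k)}_j}$, so that $J(p_k) = T^{N(p_k)}(J^{(k)}_j)$ is the $N(p_k)$-th floor of $Z^{(0,n_k)}_j$, and moreover that $p_k \mapsto N(p_k)$ is a bijection onto $\{0,1,\dots,q^{(0,n_k)}_j - 1\}$. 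The base case $P(1)$ is immediate: $J((j_1,l_1)) = T^{l_1}(J^{(1)}_{j_1})$ runs exactly through the floors of $Z^{(0,n_1)}_{j_1}$ as $l_1$ ranges over $0,\dots,q^{(1)}_{j_1}-1$.

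For the inductive step, write $p_k$ as the concatenation of $p_{k-1} = ((j_1,l_1),\dots,(j_{k-1},l_{k-1})) \in \Sigma_{k-1}$ with the final edge $(j_k,l_k)$, and set $j_{k-1} = s\big((j_k,l_k)\big)$, so that admissibility gives that $W := T_{k-1}^{l_k}(J^{(k)}_j)$ is a floor of the $T_{k-1}$-tower over $J^{(k)}_j$ contained in $J^{(k-1)}_{j_{k-1}}$. On $J^{(k)}_j$ this map is a fixed power $T^{R(j,l_k)}$, while by $P(k-1)$ the remaining composition equals $T^{N(p_{k-1})}$ on $J^{(k-1)}_{j_{k-1}} \supset W$; hence $J(p_k) = T^{N(p_{k-1}) + R(j,l_k)}(J^{(k)}_j)$, i.e. $N(p_k) = R(j,l_k) + N(p_{k-1})$. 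The key numerical fact is that climbing one further floor of the $T_{k-1}$-tower costs exactly $q^{(0,n_{k-1})}_{j_{k-1}}$ steps of $T$ --- the first-return time of $T$ to $\K^{(n_{k-1})}$ on $J^{(k-1)}_{j_{k-1}}$ --- so that the half-open blocks $\big[R(j,l_k),\, R(j,l_k) + q^{(0,n_{k-1})}_{j_{k-1}}\big)$ tile $\big[0, q^{(0,n_k)}_j\big)$ as $l_k$ varies (with $j_{k-1} = s\big((j_k,l_k)\big)$ depending on $l_k$). As $p_{k-1}$ ranges over all paths ending at $j_{k-1}$, the value $N(p_{k-1})$ ranges bijectively over $\{0,\dots,q^{(0,n_{k-1})}_{j_{k-1}}-1\}$ by $P(k-1)$, i.e. over exactly one such block. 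Consequently $N(p_k)$ determines $l_k$ (which block it lies in), then $N(p_{k-1})$ (its offset inside that block), and finally $p_{k-1}$ itself by the inductive bijection; this establishes $P(k)$ and, in particular, both the forward statement and the uniqueness in the converse.

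The step that needs genuine care is the return-time bookkeeping. One must verify that $T_{k-1}^{l_k}$ really is a single constant power $T^{R(j,l_k)}$ on the whole interval $J^{(k)}_j$ --- not just pointwise --- which follows because the successive images stay inside single intervals of $T^{(n_{k-1})}$ and the return time of $T$ to $\K^{(n_{k-1})}$ is constant on each such interval. One must also establish the two identities $R(j,l_k+1) - R(j,l_k) = q^{(0,n_{k-1})}_{j_{k-1}}$ and $R\big(j,q^{(n_{k-1},n_k)}_j\big) = q^{(0,n_k)}_j$, both of which come from the nested Rokhlin-tower structure and the fact that the $T$-orbit of $J^{(k)}_j$ meets the finer set $\K^{(n_k)}$ only when it meets $\K^{(n_{k-1})}$ at a base floor. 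Once these are in hand, the tiling argument produces the bijection mechanically, giving existence, the floor property, and uniqueness in one stroke.
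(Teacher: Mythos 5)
Your proposal is correct, but it is organised quite differently from the paper's proof. You run an induction on $k$ with a strengthened statement: the composition defining $J(p_k)$ collapses to a single power $T^{N(p_k)}$ on the base $J^{(k)}_j$, and $p_k \mapsto N(p_k)$ is a bijection onto $\{0,\dots,q^{(0,n_k)}_j-1\}$, which you obtain by tiling that index range with blocks of length $q^{(0,n_{k-1})}_{j_{k-1}}$ indexed by the last edge. The two identities you flag as needing care (the block length $R(j,l_k+1)-R(j,l_k)=q^{(0,n_{k-1})}_{j_{k-1}}$ and the total $R\big(j,q^{(n_{k-1},n_k)}_j\big)=q^{(0,n_k)}_j$) are exactly the additivity of return times under composition of induced maps, and they do hold for the reasons you give. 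The paper instead argues directly, without induction on $k$: for the forward direction it introduces the intermediate sets $F_m$ and times $r_m$, shows $F_m\subset J^{(m)}_{j_m}$, and derives a contradiction from any intermediate return to $J^{(k)}$ (it would force $l_{m+1}\ge q^{(m+1)}_{j_{m+1}}$, violating the definition of $\mathcal{E}_{m+1}$); for the converse it reads the path off a given floor by setting $r_m=\max\{n\le r: T^n(J^{(k)}_j)\subset J^{(m)}\}$, with uniqueness coming from the constraint $l_m<q^{(m)}_{j_m}$. The paper's construction is more algorithmic (it is essentially how one inverts $\pi$ pointwise later on), whereas your bijective count makes explicit that the number of paths terminating at $j$ equals the tower height $q^{(0,n_k)}_j$ and delivers existence, the floor property, and uniqueness in a single stroke; it is closer in spirit to the informal ``elevator'' picture the paper draws before the lemma, and to the path-counting identities used later for the cocycles $A$ and $M$.
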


\begin{proof}
Let $p_k = (e_1,\dots,e_k)\in \Sigma_k$, with $p_m = (j_m,l_m)$. Then $J(p_k) = T^r \Big(J^{(k)}_{j_k}\Big)$ for some $r$. 
It is a floor of $Z^{(0,n_k)}_{j_k}$ if for $0 < n \leq r$, $T^n \Big(J^{(k)}_{j_k}\Big) \not\subset J^{(k)}$.

For $0 \leq m \leq k$, let $r_m$ be such that 
\[F_m= T^{r_m} \Big(J^{(k)}_{j_k}\Big)\ ,\  \text{where}\ \ F_m = T_{m}^{l_{m+1}} \circ \dots \circ T_{k-1}^{l_k} \Big(J^{(k)}_{j_k}\Big).\]
This gives a decreasing sequence $0< r_k < r_{k-1} < \dots < r_0=r$.
\medskip

Firstly, by (backwards) induction on $m$, $F_m \subset J^{(m)}_{j_m}$ for all $m$. Indeed, this is clear for $m=k$, and by definition of $s(e_m)$, 
\[T_{m-1}^{l_m}\Big(J^{(m)}_{j_m}\Big) \subset J^{(m-1)}_{s(e_m)}.\] 
Hence if $F_m \subset J^{(m)}_{j_m}$, then 
\[F_{m-1} = T_{m-1}^{l_m} (F_m) \subset T_{m-1}^{l_m}\Big(J^{(m)}_{j_m}\Big) \subset  J^{(m-1)}_{s(e_m)} = J^{(m-1)}_{j_{m-1}},\]
where $s(e_m) = t(e_{m-1}) = j_{m-1}$ follows from the admissibility of $p_k$.

Now suppose for contradiction, that for some $0 < n \leq r$, $T^n \Big(J^{(k)}_{j_k}\Big) \subset J^{(k)}$. Let $m < k$ be such that 
$r_{m+1} < n \leq r_m $. Then for some $s_1 \geq q^{(m+1)}_{j_{m+1}}$, \[T^n \Big(J^{(k)}_{j_k}\Big) = T_m^{s_1} (F_{m+1}),\] 
and for some $s_2 \geq 0$, \[F_m = T_m^{s_2} \left( T^n \Big(J^{(k)}_{j_k}\Big) \right).\] 
Hence as $F_m = T_m^{l_{m+1}} (F_{m+1})$, \[l_{m+1} = s_1 + s_2 \geq q^{(m+1)}_{j_{m+1}}.\]
But this is impossible by definition of $\mathcal{E}_{m+1}$.

Thus we have shown that $J(p_k)$ is a floor of $Z^{(0,n_k)}_{t(p_k)}$.\\

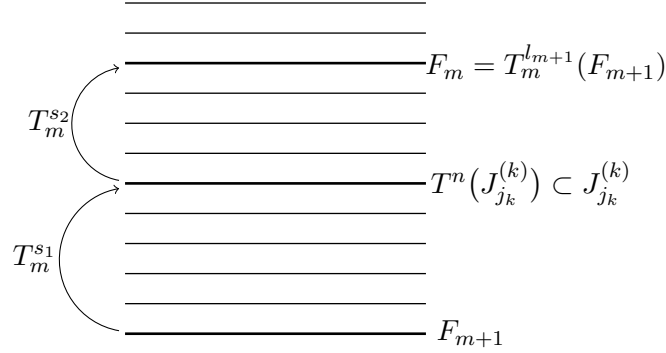
\begin{figure}[H]
\centering
\begin{tikzpicture}[scale=4]
	\draw[line width=1] (0,0) -- (1,0);
	\draw[line width=0.5] (0,0.1) -- (1,0.1);
	\draw[line width=0.5] (0,0.2) -- (1,0.2);
	\draw[line width=0.5] (0,0.3) -- (1,0.3);
	\draw[line width=0.5] (0,0.4) -- (1,0.4);
	\draw[line width=1] (0,0.5) -- (1,0.5);
	\draw[line width=0.5] (0,0.6) -- (1,0.6);
	\draw[line width=0.5] (0,0.7) -- (1,0.7);
	\draw[line width=0.5] (0,0.8) -- (1,0.8);
	\draw[line width=1] (0,0.9) -- (1,0.9);
	\draw[line width=0.5] (0,1) -- (1,1);
	\draw[line width=0.5] (0,1.1) -- (1,1.1);
	
	\node[] at (1.15,0) {$F_{m+1} $};
	\node[] at (1.35,0.5) {$T^n\big(J^{(k)}_{j_k}\big) \subset J^{(k)}_{j_k}$};
	\node[] at (1.4,0.9) {$F_{m} = T_m^{l_{m+1}}(F_{m+1})$};
	
	\draw[->] (-0.02,0.01) arc (260:100:0.24);  
	\draw[->] (-0.02,0.51) arc (260:100:0.19);  
	
	\node[] at (-0.3,0.25) {$T_m^{s_1}$};
	\node[] at (-0.25,0.7) {$T_m^{s_2}$};

\end{tikzpicture}
\caption{$J(p_k)$ is a floor of $Z^{(0,n_k)}_j$ - proof by contradiction.}
\end{figure}

\bigskip

Conversely, let $F = T^r \Big(J^{(k)}_j\Big)$ be a floor of $Z^{(0,n_k)}_j$ for some $j$. We will find $p_k \in \Sigma_k$ such that $F = J(p_k)$. 

Define for $0 \leq m \leq k$
\[r_m = \max\{0 \leq n\leq r: T^n\Big(J^{(k)}_j\Big) \subset J^{(m)}\}\ \ \ \ \text{ and }\ \ \ \ F_m = T^{r_m}\Big(J^{(k)}_j\Big).\]
Then $F_{m-1}$ and $F_m$ are both subsets of $J^{(m-1)}$, and $F_{m-1} = T^{r_{m-1}-r_m}(F_m)$, so by definition 
of the induced map $T_{m-1}$, $F_{m-1} = T_{m-1}^{l_m}(F_m)$ for some $l_m \geq 0$.

Let $j_m$ be such that $F_m \subset J^{(m)}_{j_m}$, let $j_k = j$. By maximality of $r_m$, 
\[ \forall\ 0 < l \leq l_m,\ T_{m-1}^l (F_m) \not\subset J^{(m)},\] 
hence $l_m < q^{(m)}_{j_m}$. Thus $e_m = (j_m,l_m)$ is an edge in $\mathcal{E}_m$.

Finally, the sequence $p_k = (e_1,e_2,\dots,e_k)$ is admissible. Indeed, recall that $s(e_m) = i_m$ where $i_m$ is such that 
$T_{m-1}^{l_m}\left( J^{(m)}_{j_m}\right) \subset J^{(m-1)}_{i_m}$. But then from the definitions of $j_m$ we see that $i_m = j_{m-1}$.

Thus $F = J(p_k)$, and the construction is unique such that $l_m < q^{(m)}_{j_m}$ for all $m$, hence the unique which gives a path in $\Sigma_k$.
\end{proof}

\begin{lemma}
Let $p\in \Sigma$, and for $k\geq 1$, let $p_k\in \Sigma_k$ be its truncation to the first $k$ edges. 
Then for all $k$, $J(p_{k+1}) \subset J(p_k)$. Hence the map $\pi:\Sigma \to \K$,
\[\pi(p) = \bigcap_{k\geq 1} J(p_k)\]
is well-defined.
\end{lemma}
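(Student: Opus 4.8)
The plan is to prove the two assertions in turn: first the nesting $J(p_{k+1})\subseteq J(p_k)$, and then that the resulting decreasing intersection is a single point of $\K$.

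\emph{Nesting.} I would write $p_{k+1}=(e_1,\dots,e_k,e_{k+1})$ with $e_{k+1}=(j_{k+1},l_{k+1})\in\mathcal E_{k+1}$, so that $p_k=(e_1,\dots,e_k)$ is its truncation. Setting $\Phi_k=T^{l_1}\circ T_1^{l_2}\circ\cdots\circ T_{k-1}^{l_k}$, the definitions give $J(p_k)=\Phi_k\big(J^{(k)}_{j_k}\big)$ and $J(p_{k+1})=\Phi_k\big(T_k^{l_{k+1}}(J^{(k+1)}_{j_{k+1}})\big)$. Since any map preserves inclusions, it suffices to show $T_k^{l_{k+1}}\big(J^{(k+1)}_{j_{k+1}}\big)\subseteq J^{(k)}_{j_k}$, and then apply $\Phi_k$. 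But this inclusion is exactly the defining property of the source map: $s(e_{k+1})$ is the index $i$ for which $T_k^{l_{k+1}}(J^{(k+1)}_{j_{k+1}})\subseteq J^{(k)}_i$, and admissibility of $p$ gives $s(e_{k+1})=t(e_k)=j_k$. (Lemma \ref{paths are floors} guarantees that all the sets involved are genuine tower floors, so the compositions are defined on the relevant domains.)

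\emph{Nonemptiness and shrinking atoms.} Since $\K$ is compact and the $J(p_k)$ are nonempty closed (indeed clopen) sets decreasing in $k$, the finite intersection property gives $\bigcap_k J(p_k)\neq\emptyset$. To control the size of the atoms I would use $m_K$: by Lemma \ref{paths are floors}, $J(p_k)=T^{r}\big(J^{(k)}_{j_k}\big)$ for some $r\geq 0$, and as $T$ preserves $m_K$ we get $m_K(J(p_k))=m_K(J^{(k)}_{j_k})=|I^{(n_k)}_{j_k}|$. The length relation $|I_i|=\sum_j A^{(0,n)}_{ij}|I^{(n)}_j|$ together with infinite completeness ($A^{(0,n)}_{ij}\to\infty$) forces $|I^{(n)}_j|\leq |I_i|/A^{(0,n)}_{ij}\to 0$ for every $j$, so $m_K(J(p_k))\to 0$ as $k\to\infty$.

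\emph{Single point.} Suppose for contradiction that $z<z'$ both lie in $\bigcap_k J(p_k)$, and let $x\leq x'$ in $[0,1]$ be their $l/r$-preimages. If $x<x'$, then $x$ and $x'$ both lie in the closure of the subinterval of $I$ underlying the floor $J(p_k)$, whose length is $|I^{(n_k)}_{j_k}|=m_K(J(p_k))\to 0$; this forces $x=x'$, a contradiction. Hence $x=x'$ and $\{z,z'\}=\{l(x),r(x)\}$ with $x\in D$. The hard part will be this last case, and it is precisely where the doubling compactification of \cite{MMY} is essential: a point of $D$ lies on the grand orbit of the singularities, and since the tower heights $q^{(0,n_k)}_j$ tend to infinity, for large $k$ such a point becomes an endpoint of a floor of the level-$k$ dynamical partition; consequently $l(x)$ and $r(x)$ fall into distinct (adjacent) floors and cannot both lie in the single floor $J(p_k)$, a contradiction. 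This shows $\bigcap_k J(p_k)$ is a singleton, so $\pi$ is well-defined. The nesting and the measure estimate are routine; the genuine content is separating the two copies $l(x),r(x)$ of a doubled point, since without the doubling the nested floors could a priori converge to a gap rather than to a point.
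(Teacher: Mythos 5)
Your proof is correct and follows essentially the same route as the paper: nesting via admissibility of $p$ and the definition of the source map, non-emptiness via compactness of the nested clopen floors, and shrinking of $m_K(J(p_k))$ to force a singleton. Two remarks: your derivation of $|I^{(n_k)}_{j_k}|\to 0$ from the length relation and infinite completeness is a valid (and self-contained) substitute for the paper's bare citation of $m_K(J^{(k)})\to 0$; and your explicit treatment of the doubled-point case is actually more careful than the paper, whose one-line measure argument does not by itself distinguish a singleton from a pair $\{l(x),r(x)\}$ with $x\in D$, since both are $m_K$-null --- the separation of the two copies by the level-$k$ floors for $k$ large (which you sketch via $x$ eventually becoming a floor endpoint) is genuinely needed, and is the same fact that underlies the bijectivity of $\pi$ asserted just after the lemma.
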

\begin{proof}
By admissibility of $p$, \[T_k^{(l_{k+1})} \big(J^{(k+1)}_{j_{k+1}}\Big) \subset J^{(k)}_{j_{k}}.\]
Hence 
\[J(p_{k+1}) =  T^{l_1} \circ T_1^{l_2} \circ \dots \circ T_{k-1}^{l_k} \left( T_{k}^{l_{k+1}} \Big(J^{(k+1)}_{j_{k+1}}\Big) \right)  \subset
T^{l_1} \circ T_1^{l_2} \circ \dots \circ T_{k-1}^{l_k} \Big(J^{(k)}_{j_{k}}\Big) = J(p_k).\]

As shown in Lemma \ref{paths are floors}, $J(p_k)$ is a floor of the tower $Z^{(0,n_k)}_{j_k}$, which is a clopen subset of $K$, hence 
$\pi(p) = \bigcap_{k\geq 1} J(p_k)$ is non-empty. Further,
\[m_K(J(p_k)) = m_K\big(J^{(k)}_{j_k}\big) < m_K(J^{(k)}).\]
Since $m_K(J^{(k)}) \to 0$ as $k\to +\infty$, we see that the intersection $\pi(p)$ is indeed a single point in $\K$.
\end{proof}

The map $\pi$ is bijective, since for $x\in K$, $\pi^{-1}(x)$ can be uniquely determined by looking at the floors in the towers $Z_{j_k}^{(0,n_k)}$ that 
$x$ belongs to.

\subsubsection{Adic maps for IETs}
Once we have an ordered Bratteli diagram like above, we can define the adic map $\tau: \Sigma\setminus\Sigma_{max} \to \Sigma$. 

Define $\Sigma^0 = \Sigma\setminus \cup_{n \geq 0} \tau^{-n}(\Sigma_{max}) \setminus \cup_{n \geq 0} \tau^{n}(\Sigma_{min}) $. 
Then $\tau$ can be iterated arbitrarily many times both forwards and backwards on $\Sigma^0$.

\begin{lemma}\label{finite coding} 
For any $p\in \Sigma^0$,
\[\pi(\tau p) = T(\pi(p)).\]

\end{lemma}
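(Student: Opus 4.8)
The plan is to show that applying the adic map $\tau$ to a path $p$ corresponds exactly to moving the point $\pi(p)$ up one step under $T$. I would work from the explicit description of $\tau$ recalled just before the lemma: writing $p=(e_1,e_2,\dots)$ with $e_m=(j_m,l_m)$, and letting $n$ be the smallest index for which $e_n$ is non-maximal, one has $\tau p=(f_1,\dots,f_n,e_{n+1},\dots)$ with $f_n=(j_n,l_n+1)$ and $f_m=(s(f_{m+1}),0)$ minimal for $m<n$. The goal is to prove that $T(\pi(p))\in J((\tau p)_k)$ for all $k\geq n$; since the sets $J((\tau p)_k)$ are nested and shrink to the single point $\pi(\tau p)$, this identifies $T(\pi(p))$ with $\pi(\tau p)$, and the indices $k<n$ are handled automatically by nestedness.

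The key geometric fact I would establish first, by induction on $k$, is that if $e_1,\dots,e_k$ are all maximal then $J(p_k)$ is the top floor of the tower $Z^{(0,n_k)}_{j_k}$, equivalently $T(J(p_k))\subseteq J^{(k)}$. The inductive step uses the recursive tower structure: writing $J(p_k)=T^{l_1}\circ T_1^{l_2}\circ\cdots\circ T_{k-2}^{l_{k-1}}(B)$ with $B=T_{k-1}^{l_k}(J^{(k)}_{j_k})\subseteq J^{(k-1)}_{j_{k-1}}$, the operator $T^{l_1}\circ\cdots\circ T_{k-2}^{l_{k-1}}$ acts on the sub-base $B$ as the fixed power $T^{R_{k-1}}$, where $J(p_{k-1})=T^{R_{k-1}}(J^{(k-1)}_{j_{k-1}})$ is the top floor of $Z^{(0,n_{k-1})}_{j_{k-1}}$ by the inductive hypothesis, so that $R_{k-1}+1=q^{(0,n_{k-1})}_{j_{k-1}}$ is the tower height and hence the first-return time of $J^{(k-1)}_{j_{k-1}}$ to $J^{(k-1)}$. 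Since $e_k$ is maximal, $B$ is the top floor of the $T_{k-1}$-tower $Z^{(k)}_{j_k}$, so $T^{R_{k-1}+1}(B)=T_{k-1}(B)\subseteq J^{(k)}$, which is exactly the statement that $J(p_k)$ is a top floor.

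With this in hand I would run the main computation. Fix $k\geq n$ and set $C_k=T_n^{l_{n+1}}\circ\cdots\circ T_{k-1}^{l_k}(J^{(k)}_{j_k})\subseteq J^{(n)}_{j_n}$, so that $J(p_k)=T^{l_1}\circ\cdots\circ T_{n-2}^{l_{n-1}}(D_k)$ with $D_k=T_{n-1}^{l_n}(C_k)\subseteq J^{(n-1)}_{j_{n-1}}$, while $J((\tau p)_k)=T_{n-1}^{l_n+1}(C_k)$ because $f_1,\dots,f_{n-1}$ contribute height $0$. Since $e_1,\dots,e_{n-1}$ are maximal, the first part shows $T^{l_1}\circ\cdots\circ T_{n-2}^{l_{n-1}}$ sends $D_k$ into the top floor of $Z^{(0,n_{n-1})}_{j_{n-1}}$ via the power $T^{R_{n-1}}$ with $R_{n-1}+1=q^{(0,n_{n-1})}_{j_{n-1}}$ the return time to $J^{(n-1)}$. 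Thus for $x=\pi(p)\in J(p_k)$ one gets $T(x)\in T^{R_{n-1}+1}(D_k)=T_{n-1}(D_k)=T_{n-1}^{l_n+1}(C_k)=J((\tau p)_k)$, and letting $k\to\infty$ yields $T(\pi(p))=\pi(\tau p)$. I expect the main obstacle to be the bookkeeping of the interaction between the induced maps $T_m$ and the first-return structure, in particular justifying cleanly that each $T_m$ restricted to a given floor or sub-base acts as a single fixed power of $T$, so that the abstract elevator identities become honest equalities of powers of $T$; once that is pinned down, the cases $n=1$ and $n\geq 2$ are handled uniformly and the maximality bookkeeping is routine.
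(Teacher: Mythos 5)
Your proof is correct and takes essentially the same route as the paper's: both rest on the observation that maximality of the first $n-1$ edges lets a single application of $T$ cascade through the induced maps ($T\circ T^{l_1}=T_1$ on $J^{(1)}_{j_1}$, then $T_1\circ T_1^{l_2}=T_2$ on $J^{(2)}_{j_2}$, and so on) until it reaches the first non-maximal level and simply increments $l_n$, yielding $T(J(p_k))=J((\tau p)_k)$ for all $k\geq n$ and hence the claim after intersecting over $k$. Your preliminary induction (an all-maximal prefix gives a top floor) and the conversion of the induced maps into explicit powers of $T$ amount to a more detailed bookkeeping of the paper's telescoping computation, not a different argument.
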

\begin{proof}
Let $p = (e_1,e_2,\dots) \in \Sigma^0$, with $e_k=(j_k,l_k)$. Let $n = \min\{k: e_k \text{ is not maximal}\}$.
Then 
\[\tau p = (f_1,\dots,f_n,e_{n+1},e_{n+2},\dots),\ \text{ where  }\ f_n = (j_n,l_n+1),\ \text{ and for }\ k<n,\ f_k = (s(f_{k+1}),0).\]

Now, since $e_k$ is maximal for $k < n$, $l_k = q^{(k)}_{j_k} - 1$. Hence $T_{k-1} \circ T_{k-1}^{l_k} = T_{k-1}^{q^{(k)}_{j_k}} = T_k$. Thus:

\begin{align*}
T(J(p_n)) & = T \circ T^{l_1} \circ T_1^{l_2} \circ \dots \circ T_{n-1}^{l_n} \Big( J^{(n)}_{j_n}\Big) \\
&= T_1 \circ T_1^{l_2} \circ \dots \circ T_{n-1}^{l_n} \Big( J^{(n)}_{j_n}\Big) \\
&= T_2 \circ T_2^{l_3} \circ \dots \circ T_{n-1}^{l_n} \Big( J^{(n)}_{j_n}\Big) \\
&\dots\\
&= T_{n-1}^{l_n+1} \Big( J^{(n)}_{j_n}\Big) \\
& = J(\tau p_n).
\end{align*}

Similarly for any $k>n$, 
\[T(J(p_k)) = J(\tau p_k),\]
and so $T(\pi(p)) = \pi(\tau p)$.
\end{proof}

Hence we can use $\tau:\Sigma^0 \to \Sigma^0$ to code $T$, which will be the main tool in our argument.\\

It will be useful later to understand what paths we are removing by restricting our attention to $\Sigma^0$ instead of $\Sigma$.

\begin{lemma}\label{lem:minmax}
The set $\pi(\Sigma\setminus \Sigma^0)$ is {equal to} the $T$-orbit of $0\in \K$. In particular, $\Sigma\setminus \Sigma^0$ is countable.
\end{lemma}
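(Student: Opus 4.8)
The plan is to reduce the lemma to computing the two \textbf{boundary sets} $\pi(\Sigma_{max})$ and $\pi(\Sigma_{min})$ and then to transport these along $\tau$-orbits using the intertwining relation from Lemma \ref{finite coding}. The first thing I would observe is that, although Lemma \ref{finite coding} is stated for $p \in \Sigma^0$, its proof uses nothing beyond the existence of a smallest index $n$ with $e_n$ non-maximal; hence the identity $\pi(\tau p) = T(\pi(p))$ holds for \emph{every} $p \in \Sigma \setminus \Sigma_{max}$, and dually $\pi(\tau^{-1} q) = T^{-1}(\pi(q))$ whenever $q$ lies in the image of $\tau$. This is what will let me iterate the relation along orbits that have already left $\Sigma^0$.

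Next I would compute the boundary sets. For $p = (e_1,e_2,\dots) \in \Sigma_{min}$ every edge is minimal, i.e. $e_k = (j_k,0)$, so $J(p_k) = \K^{(n_k)}_{j_k} \subseteq \K^{(n_k)}$ and therefore $\pi(p) = \bigcap_k J(p_k) \subseteq \bigcap_k \K^{(n_k)}$. Since the $\K^{(n_k)}$ are nested clopen sets, all containing the left endpoint $0$, with $m_K(\K^{(n_k)}) = m_K(J^{(k)}) \to 0$, their intersection is exactly $\{0\}$; hence $\pi(\Sigma_{min}) = \{0\}$. For $p \in \Sigma_{max}$ every edge is maximal, and I would run the telescoping computation from the proof of Lemma \ref{finite coding} (using the nesting $F_m \subseteq J^{(m)}_{j_m}$ established in Lemma \ref{paths are floors}) to get $T(J(p_k)) = T_k(\K^{(n_k)}_{j_k}) \subseteq \K^{(n_k)}$. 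Applying the homeomorphism $T$ to $\pi(p) = \bigcap_k J(p_k)$ and using injectivity of $T$ gives $T(\pi(p)) \in \bigcap_k \K^{(n_k)} = \{0\}$, so $\pi(\Sigma_{max}) = \{T^{-1}(0)\}$. Because $\pi$ is a bijection, each of $\Sigma_{min}, \Sigma_{max}$ is in fact a single path, say $p_{min}$ and $p_{max}$.

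I would then assemble the two inclusions. Writing $\Sigma \setminus \Sigma^0 = \bigcup_{n\geq 0}\tau^{-n}(\Sigma_{max}) \cup \bigcup_{n\geq 0}\tau^{n}(\Sigma_{min})$, the intertwining relation gives $\pi(\tau^n p_{min}) = T^n(0)$ whenever $\tau^n p_{min}$ is defined, and $\pi(\tau^{-n} p_{max}) = T^{-n-1}(0)$ whenever $\tau^{-n} p_{max}$ is defined, so $\pi(\Sigma \setminus \Sigma^0) \subseteq \{T^n(0) : n \in \Z\}$. For the reverse inclusion I follow the forward $\tau$-orbit of $p_{min}$ (which realises every $T^n(0)$, $n \geq 0$) and the backward $\tau$-orbit of $p_{max}$ (which realises every $T^{-m}(0)$, $m \geq 1$), both as $\pi$-images of points of $\Sigma \setminus \Sigma^0$; if one of these orbits terminates by hitting the opposite boundary path, then $0$ is $T$-periodic and its finite orbit is nonetheless covered in full. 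This proves $\pi(\Sigma \setminus \Sigma^0)$ equals the $T$-orbit of $0$, and since that orbit is countable and $\pi$ is a bijection, $\Sigma \setminus \Sigma^0 = \pi^{-1}(\mathrm{orbit}(0))$ is countable.

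The step I expect to be the main obstacle is the maximal-path computation: one must check carefully that a fully maximal path projects at every level to the \emph{top} floor of the tower $Z^{(0,n_k)}_{j_k}$, so that a single application of $T$ returns it to $\K^{(n_k)}$. This is exactly the reverse telescoping of Lemma \ref{finite coding}, and the only delicate point is that the intertwining relation is iterated along orbits lying outside $\Sigma^0$ — which is legitimate precisely because that proof requires nothing more than a non-maximal edge.
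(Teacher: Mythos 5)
Your proposal is correct and follows essentially the same route as the paper: identify $\pi(\Sigma_{min})=\{0\}$ and $\pi(\Sigma_{max})=\{T^{-1}(0)\}$ via the tower picture, then transport these along $\tau$-orbits using the conjugacy $\pi\circ\tau = T\circ\pi$ to cover the full $T$-orbit of $0$. The paper's proof is merely terser; your extra care about extending Lemma \ref{finite coding} beyond $\Sigma^0$ and about the reverse inclusion (including the degenerate periodic case, which in fact cannot occur by minimality of $F$) fills in details the paper leaves implicit.
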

\begin{proof}
Consider a maximal path $p\in \Sigma_\textrm{max}$. Then any truncation $p_k$ is maximal, which 
means that $\pi(J(p_k))$ is the top floor of its tower $Z_j^{(0,n_k)}$, meaning that $T(J(p_k)) \subset J^{(k)}$. As this holds for any $k$, $T(\pi(p))$ must
be contained inside $\cap_{k\geq 1} J^{(k)} = \{0\}$. Hence $\Sigma_\textrm{max} \subset \pi^{-1}(T^{-1}(0))$.
 
Similarly, consider a minimal path $p \in \Sigma_\textrm{min}$. For any $k$, the truncation $p_k$ is minimal, implying that $J(p_k)$ is contained in the 
base of its tower, hence in $J^{(k)}$. Then $\pi(p) \in \{0\}$, so $\Sigma_\textrm{min} \subset \pi^{-1} (0)$. 

{Since $\pi^{-1}(0)$ is minimal and $\pi^{-1}(T^{-1}(0))$ is maximal, $\Sigma\setminus \Sigma^0 = \pi^{-1}\big(\cup_{n\in \Z} T^n(0) \big)$. }
\end{proof}

\section{Symbolic renormalisation}\label{sec:symb}
The interpretation of the shift operator on a Bratteli diagram as renormalisation was made in Lindsey's thesis \cite{Lindsey_thesis} and in 
\cite{LT} in the context of Teichm\"uller geodesic flow. Here we explain this connection in the context of Rauzy-Veech renormalisation for IETs, 
and further extend it to skew-products.
For simplicity of notation, we deal with the case of periodic type skew-products, but the non-periodic case is analogous, as 
summarised in section \ref{subsec:non-periodic}.

From now on, fix an infinitely complete IET $\F\in \mathcal{X}_d^0$ and a skewing cocycle $\phi: I \to G \cong \Z^m$, such that for some 
$N>0$, $\widetilde{\mathcal{R}}^N (\F_\phi) = \F_\phi$. {Let $T_\phi$ be the continuous extension on $K\times G$.}
Construct a Bratteli diagram with \[J^{(k)} = \K^{(kN)},\] let $\tau: \Sigma^0\to\Sigma^0$ be 
the corresponding adic map. \\

Since $\widetilde{\mathcal{R}}^N(\F_\phi) = \F_\phi$, also $\mathcal{R}^N(\F) = \F$. This means that if 
\[{s:I^{(N)}} \to I\] is the orientation-preserving 
dilation map that scales {$I^{(N)}$} up to $I$, then
\[\F_1 = \hat{\mathcal{R}}^N(\F) = \s^{-1} \circ \F \circ \s.\]

The by induction, for all $k$
\begin{equation*}\label{scaling T}
\F_k = \hat{\mathcal{R}}^{kN}(\F) = \s^{-k} \circ \F \circ \s^k.
\end{equation*}

Then we see that for all $k,j$, 
\begin{equation*}\label{scaling J}
s\big(I^{(N(k+1))}\big) = I^{(Nk)},\ \ \text{ and }\ \ s \big(I^{(N(k+1))}_j\big) = I^{(Nk)}_j.
\end{equation*} 

Define also the map $\mathcal{S}: \1 J \to K$ performing the role of normalisation for the continuous extension, given by
\[\mathcal{S} = l \circ s \circ l^{-1}.\]
Then for all $k$ and $j$,
\[T_k = \mathcal{S}^{-k} \circ T \circ \mathcal{S}^k,\]
as well as
\[\mathcal{S}(J^{(k+1)}) = J^{(k)},\ \ \text{ and }\ \ \mathcal{S} (J^{(k+1)}_j) = J^{(k)}_j.\]

Define $q_j = q^{(1)}_j = q^{(0,N)}_j$ for all $j$, and note that $q^{(k)}_j = q_j$ for all $k$.

Let $A = A^{(0,N)}$ be the matrix of the Rauzy-Veech cocycle for $T$. We assume that $A$ is positive (or else we take a multiple of $N$).

Since $\widetilde{\mathcal{R}}^N(\F_\phi) = \F_\phi$, we have from Proposition \ref{height cocycle}
\[{\phi^{(N)} =\ } A^T \phi = \phi.\]

\subsection{The shift map}
\begin{defn}
Let $\sigma : \Sigma \to \Sigma$ be the left shift map
\begin{align*}
\sigma : \Sigma &\to \Sigma\\
(e_1,e_2,e_3,\dots) &\mapsto (e_2,e_3,\dots).
\end{align*}

Let $\iota: \Sigma \to \Sigma$ be the right shift map
\begin{align*}
\iota : \Sigma &\to \Sigma\\
(e_2,e_3,\dots) &\mapsto \big((s(e_2),0),e_2,e_3,\dots\big).
\end{align*}

\end{defn}
The dynamical interpretation of $\iota$ is as the {inverse of the normalisation map, namely the map
$\mathcal{S}^{-1}: K \to \1J \subset K$.}
On the other hand $\sigma$ corresponds to projection of the Rokhlin towers $\1Z_j$ onto their bases, 
followed by {normalisation} back up to $\K$ using $\mathcal{S}$. Formally:

\begin{lemma}\label{lem:projection}
\begin{enumerate}[a)]
\item For all $p\in \Sigma$,
\[\pi(\iota p) = \mathcal{S}^{-1}(\pi(p)).\]

\item Consider the map $R: \K \to \1J$ given by $R(x) = T^{-l}(x)$, where
\[l = \min_{n \geq 0} \{n: T^{-n}(x) \in \1J\}.\]
Then for all $p\in \Sigma$,
\[\pi(\sigma p) = \mathcal{S}\circ R\,(\pi(p)).\]
\end{enumerate}
\end{lemma}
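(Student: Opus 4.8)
The plan is to verify both identities first at the level of the finite cylinder sets $J(p_k)$ and then pass to the limit $\pi(p)=\bigcap_k J(p_k)$. Everything is driven by two facts about the normalisation $\mathcal{S}$. First, from $T_k=\mathcal{S}^{-k}\circ T\circ\mathcal{S}^k$ one obtains the intertwining relation $\mathcal{S}\circ T_m=T_{m-1}\circ\mathcal{S}$ for $m\geq 1$ (equivalently $\mathcal{S}^{-1}\circ T_{m-1}=T_m\circ\mathcal{S}^{-1}$), with the convention $T_0=T$. Second, the excerpt already records the scaling relation $\mathcal{S}(J^{(k)}_j)=J^{(k-1)}_j$, hence $\mathcal{S}^{-1}(J^{(k-1)}_j)=J^{(k)}_j$. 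Since $\mathcal{S}$ is a homeomorphism it will commute with nested intersections, which is what lets me pass from the cylinder level to $\pi$.

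For part (a), I fix $p=(e_1,e_2,\dots)$ with $e_i=(j_i,l_i)$, so that $\iota p$ has edges $(s(e_1),0),e_1,e_2,\dots$. Expanding $J((\iota p)_k)$ from the definition, the leading edge $(s(e_1),0)$ contributes a trivial $T^0$, leaving $T_1^{l_1}\circ\cdots\circ T_{k-1}^{l_{k-1}}\big(J^{(k)}_{j_{k-1}}\big)$. On the other side I compute $\mathcal{S}^{-1}(J(p_{k-1}))$ and push $\mathcal{S}^{-1}$ to the right through the composition using $\mathcal{S}^{-1}T_{m-1}=T_m\mathcal{S}^{-1}$, finishing with $\mathcal{S}^{-1}\big(J^{(k-1)}_{j_{k-1}}\big)=J^{(k)}_{j_{k-1}}$. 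The two expressions coincide, so $J((\iota p)_k)=\mathcal{S}^{-1}(J(p_{k-1}))$ for all $k$, and intersecting over $k$ yields $\pi(\iota p)=\mathcal{S}^{-1}(\pi(p))$.

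For part (b), the extra ingredient is identifying $R$. Since $\pi(p)\in J(p_1)=T^{l_1}\big(J^{(1)}_{j_1}\big)$ is the $l_1$-th floor of the return tower $Z^{(0,N)}_{j_1}$, and the floors $T^0,\dots,T^{l_1}$ of that tower meet $J^{(1)}$ only at the base, the integer $l_1$ is exactly the least $n\geq 0$ with $T^{-n}(\pi(p))\in J^{(1)}$; hence $R(\pi(p))=T^{-l_1}(\pi(p))$. I then compute $T^{-l_1}(J(p_k))=T_1^{l_2}\circ\cdots\circ T_{k-1}^{l_k}\big(J^{(k)}_{j_k}\big)$, apply $\mathcal{S}$, and push it rightward with $\mathcal{S}T_m=T_{m-1}\mathcal{S}$ and $\mathcal{S}\big(J^{(k)}_{j_k}\big)=J^{(k-1)}_{j_k}$ to obtain exactly $J((\sigma p)_{k-1})$. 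Intersecting over $k$ and using that $\mathcal{S}$ commutes with the intersection gives $\pi(\sigma p)=\mathcal{S}(R(\pi(p)))$.

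The bookkeeping of indices while commuting $\mathcal{S}^{\pm1}$ past the composition $T_1^{l_2}\circ\cdots$ is routine. The one genuinely non-formal step is the identification in part (b) of the return time $l$ in the definition of $R$ with the floor index $l_1$; this relies on the basic tower fact that a first-return tower visits its base $J^{(1)}$ exactly once per orbit segment, so I expect this to be the only point needing care. A minor technical remark is that one should check $\mathcal{S}^{\pm1}$ genuinely commutes with $\bigcap_k$, which is immediate since $\mathcal{S}$ is a bijection.
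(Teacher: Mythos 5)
Your proof is correct and follows essentially the same route as the paper: part (a) is verified at the level of the cylinders $J(p_k)$ using the conjugation $T_k=\mathcal{S}^{-k}\circ T\circ\mathcal{S}^k$ and then passing to the nested intersection, and part (b) rests on the same identification $R(\pi(p))=T^{-l_1}(\pi(p))$ coming from the Rokhlin tower structure. The only cosmetic difference is that the paper deduces (b) from (a) by observing $\iota\sigma p=((j_1,0),e_2,e_3,\dots)$ and hence $\pi(\iota\sigma p)=T^{-l_1}(\pi(p))$, whereas you redo the intertwining computation for $\sigma p$ directly; both arguments hinge on exactly the same facts.
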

\begin{proof}
\begin{enumerate}[a)]
\item Consider any truncation $p_r$ of $p$. From the definition of $\pi$ and the equality $T_{k+1} = \mathcal{S}^{-1} \circ T_k \circ \mathcal{S}$ 
for all $k$, it follows that $J(\iota p_r) = \mathcal{S}^{-1} J(p_r).$ Taking the intersection as $r\to \infty$ we conclude that 
$\pi(\iota p) = \mathcal{S}^{-1}(\pi(p)).$
\item Let $p = ((j_1,l_1),e_2,e_3,\dots)$. Then $\iota \sigma p = ((j_1,0),e_2,e_3,\dots)$. From the definition of $\pi$,
\[\pi(\iota \sigma p ) = T^{-l_1} (\pi(p)) = R(\pi(p)).\]
We conclude using part (a).
\end{enumerate}
\end{proof}

\subsection{Symbolic renormalisation for skew-products}
We code the skew-product $T_\phi$ by the skew-product
\begin{align*}
\tau_\phi: \Sigma^0 \times G &\to \Sigma^0 \times G\\
(p,a) &\mapsto (\tau p,a+\phi(p)).
\end{align*}
Here to simplify notation we denote by $\phi: \Sigma \to G$ the composition of $\pi$ with $\phi: \K \to G$. Note that if $p = (e_1,e_2,\dots)$, then 
$\pi(p) \in \K_{s(e_1)}$, so $\phi(p) = \phi_{s(e_1)}$.\\

We will now define a cocycle $f$ for the shift $\sigma$ such the skew-product $\sigma_f$ will correspond to the projection of the 
towers $\1{\tilde Z}_{j,a}$ onto their bases, in the same way as $\sigma$ corresponds to the projection of the towers $\1Z_j$.

\begin{defn}\label{defn:f}
Define the function $f:\Sigma \to G$ as follows:

If $p = ((j_1,l_1),\dots)$, then 
\[f(p) = -\sum_{l=0}^{l_1-1} \phi_{s((j_1,l))} = -S^T_{l_1} \phi \big|_{\1J_{j_1}}.\]

\end{defn}
See the figure below for a pictorial representation of this definition.
\begin{figure}[H]
\centering
\begin{tikzpicture}[scale=5]
	\draw[line width=1] (0,0) -- (1,0);
	\draw[line width=0.5] (0,0.1) -- (1,0.1);
	\draw[line width=0.5] (0,0.2) -- (1,0.2);
	\draw[line width=0.5] (0,0.3) -- (1,0.3);
	\draw[line width=0.5] (0,0.4) -- (1,0.4);
	\draw[line width=0.5] (0,0.5) -- (1,0.5);
	
	\node[] at (1.15,0) {$\subset \K_{i_0}$};
	\node[] at (1.15,0.1) {$\subset \K_{i_1}$};
	\node[] at (1.15,0.2) {$\subset \K_{i_2}$};
	\node[] at (1.15,0.3) {$\subset \K_{i_3}$};
	\node[] at (1.15,0.4) {$\subset \K_{i_4}$};
	
	\node[] at (1.45,0) {$f = 0$};
	\node[] at (1.51,0.1) {$f = -\phi_{i_0}$};
	\node[] at (1.605,0.2) {$f = -\phi_{i_0} - \phi_{i_1}$};
	\node[] at (1.70,0.3) {$f = -\phi_{i_0} - \phi_{i_1} - \phi_{i_2}$};
	\node[] at (1.795,0.4) {$f = -\phi_{i_0} - \phi_{i_1} - \phi_{i_2} - \phi_{i_3}$};
	\node[] at (1.89,0.5) {$f = -\phi_{i_0} - \phi_{i_1} - \phi_{i_2} - \phi_{i_3} - \phi_{i_4}$};

\end{tikzpicture}
\caption{The map $f$ on the floors of a tower $Z^{(1)}_j$.}
\label{f picture}
\end{figure}
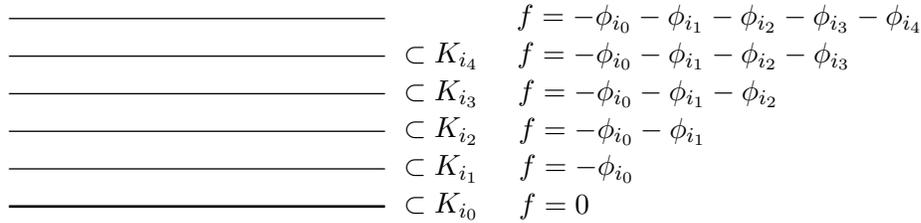

\begin{lemma}\label{lem:projection_sp}
For all $p\in \Sigma$,
Consider the map $\tilde R: \K\times G \to \1J\times G$ given by $\tilde R(x,a) = T_\phi^{-l}(x,a)$, where
\[l = \min_{n \geq 0} \{n: T^{-n}(x) \in \1J\}.\]
Then for all $(p,a)\in \Sigma \times G$,
\[(\pi\times \id) (\sigma_f (p,a)) = (\mathcal{S}\times \id)\circ \tilde R\,(\pi(p),a).\]
\end{lemma}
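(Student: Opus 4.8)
The plan is to treat this as the skew-product lift of Lemma \ref{lem:projection}(b) and verify the two coordinates of the asserted equality separately. Writing $x = \pi(p)$ and unpacking the skew-product, we have $\sigma_f(p,a) = (\sigma p,\, a + f(p))$, so
\[(\pi \times \id)(\sigma_f(p,a)) = \big(\pi(\sigma p),\, a + f(p)\big).\]
The base coordinate is handled at once by Lemma \ref{lem:projection}(b), which gives $\pi(\sigma p) = \mathcal{S}\circ R(x)$; since the first coordinate of $(\mathcal{S}\times\id)\circ\tilde R(x,a)$ is also $\mathcal{S}(R(x))$, the base components agree and everything reduces to matching the $G$-components.

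For the $G$-component I would first record the backward cocycle of $T_\phi$. Since $T_\phi(x,a) = (Tx,\, a+\phi(x))$, one checks by iterating $T_\phi^{-1}$ that $T_\phi^{-l}(x,a) = \big(T^{-l}x,\, a - S^T_l\phi(T^{-l}x)\big)$, where $S^T_l\phi(T^{-l}x) = \sum_{i=1}^{l}\phi(T^{-i}x)$ collects the skewing values read off along the backward orbit. Taking $l = \min_{n\geq 0}\{n : T^{-n}x \in \1J\}$ as in the statement gives $\tilde R(x,a) = \big(R(x),\, a - S^T_l\phi(R(x))\big)$, so the $G$-component of $(\mathcal{S}\times\id)\circ\tilde R(x,a)$ is $a - S^T_l\phi(R(x))$. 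Hence it remains to prove the purely $G$-valued identity $f(p) = -S^T_l\phi(R(x))$.

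The final identification is where the bookkeeping happens. Writing $p = ((j_1,l_1),e_2,\dots)$, the point $x = \pi(p)$ lies in the floor $T^{l_1}(\1J_{j_1})$ of its tower, so the descent index $l$ coincides with the floor index $l_1$ and $R(x) = T^{-l_1}x \in \1J_{j_1}$ (this is exactly the identity $\pi(\iota\sigma p) = R(\pi(p))$ used in the proof of Lemma \ref{lem:projection}(b)). Then for $0 \le l' < l_1$ the point $T^{l'-l_1}x$ lies in $T^{l'}(\1J_{j_1}) \subset \K_{s((j_1,l'))}$, whence $\phi(T^{l'-l_1}x) = \phi_{s((j_1,l'))}$; summing and reindexing yields
\[S^T_{l_1}\phi(R(x)) = \sum_{l'=0}^{l_1-1}\phi(T^{l'-l_1}x) = \sum_{l'=0}^{l_1-1}\phi_{s((j_1,l'))} = -f(p)\]
by Definition \ref{defn:f}. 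Combining the two coordinates gives the claim. I do not expect a genuine obstacle here: the only care needed is in getting the sign and the index range of the backward Birkhoff cocycle right, and in matching the abstract floor-and-source description of the edges $(j_1,l')$ with the geometric floors $T^{l'}(\1J_{j_1})$ on which $\phi$ is constant.
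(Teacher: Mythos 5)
Your proposal is correct and follows essentially the same route as the paper: compute the backward cocycle $T_\phi^{-l}(x,a) = \big(T^{-l}(x),\,a-\sum_{i=1}^{l}\phi(T^{-i}x)\big)$, identify that sum with $-f(p)$ via the floor decomposition of the tower over $\1J_{j_1}$, and conclude from Lemma \ref{lem:projection}. The only difference is that you spell out the identification $-\sum_{i=1}^{l}\phi(T^{-i}x) = f(p)$ explicitly, which the paper leaves implicit.
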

(Recall that $\mathcal{S}:\1J \to \K$ is the {normalising} map.)
\begin{proof}
Observe that $T_\phi^{-1}(x,a) = \big(T^{-1}(x),a-\phi(T^{-1}(x))\big).$
Hence if $x = \pi(p)$, then
\[\widetilde{R}(x,a) = T_\phi^{-l}(x,a) = \Big(T^{-l},a-\sum_{i=1}^{l} \phi\Big(T^{-i}(x)\Big)\Big) = \big(T^{-l}(x),a+f(p)\big) = (R(x),a+f(p)).\]
We conclude from Lemma \ref{lem:projection}.
\end{proof}

Recall the definitions of the orbit equivalence relation $\mathcal{O}$ and tail equivalence relation $\mathfrak{T}$ from section \ref{subsec:ideaANSS}.
\begin{prop}\label{prop:tail=orb}
The following equality of relations holds:
\begin{equation*}
\mathcal{O}(\tau_{\phi}) = \mathfrak{T}(\sigma_f).
\end{equation*}
\end{prop}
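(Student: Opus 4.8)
The plan is to carry both relations over to $\K\times G$ through the coding and reduce everything to the orbit structure of $T_\phi$. Write $\Pi:=\pi\times\id$; by Lemma \ref{finite coding} together with $\phi(p)=\phi(\pi(p))$ we have $\Pi\circ\tau_\phi=T_\phi\circ\Pi$, and by Lemma \ref{lem:minmax}, $\Pi$ is a bijection of $\Sigma^0\times G$ onto $(\K\setminus\mathcal{N})\times G$, where $\mathcal{N}=\{T^n0:n\in\Z\}$ is the $T$-orbit of $0$. Hence $(p,a)\,\mathcal{O}(\tau_\phi)\,(q,b)$ holds exactly when $\Pi(p,a)$ and $\Pi(q,b)$ lie in the same $T_\phi$-orbit. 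On the other side, Lemma \ref{lem:projection_sp} exhibits $\sigma_f$ as the renormalised backward first-return map, $\Pi\circ\sigma_f=(\mathcal{S}\times\id)\circ\tilde R\circ\Pi$. So the proposition reduces to a comparison, on $(\K\setminus\mathcal{N})\times G$, between the $T_\phi$-orbit relation and the relation ``eventually identified by iterating $(\mathcal{S}\times\id)\circ\tilde R$''.

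The key step is to promote Lemma \ref{lem:projection_sp} to the level-$n$ identity
\[\Pi\circ\sigma_f^{\,n}=(\mathcal{S}^n\times\id)\circ\tilde R_n\circ\Pi,\qquad \tilde R_n(x,a)=T_\phi^{-l}(x,a),\ \ l=\min\{m\ge0:T^{-m}x\in J^{(n)}\},\]
so that $\tilde R_n$ sends a point backward to the base of its level-$n$ Rokhlin tower. I would prove this by induction on $n$, using the conjugacies $T_k=\mathcal{S}^{-k}\circ T\circ\mathcal{S}^k$ and the stationarity forced by periodic type, which makes $f$ have the same form at every level; concretely, composing ``return to base and renormalise once'' with itself telescopes, because of the nesting $J^{(k+1)}\subset J^{(k)}$, into ``return to the level-$n$ base and renormalise $n$ times''. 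Since $\mathcal{S}^n\times\id$ is injective, this gives
\[\sigma_f^{\,n}(p,a)=\sigma_f^{\,n}(q,b)\iff \tilde R_n\big(\Pi(p,a)\big)=\tilde R_n\big(\Pi(q,b)\big),\]
so $\mathfrak{T}(\sigma_f)$ translates into the condition that $\tilde R_n\Pi(p,a)=\tilde R_n\Pi(q,b)$ for some $n$.

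It then remains to show, for $y,y'\in(\K\setminus\mathcal{N})\times G$, that $\tilde R_ny=\tilde R_ny'$ for some $n$ if and only if $y$ and $y'$ share a $T_\phi$-orbit. The forward direction is immediate, since $\tilde R_n$ only moves a point backward along its own $T_\phi$-orbit. For the converse, write $y=(x,a)$ and $y'=T_\phi^{\,m}y$; I would pick $n$ so large that the forward first-return time of $x$ to $J^{(n)}$ exceeds $m$. This is possible because $J^{(n)}=\K^{(nN)}$ shrinks to $\{0\}$ by infinite completeness and $x\notin\mathcal{N}$, so any fixed finite arc of the forward orbit of $x$ eventually avoids $J^{(n)}$; then $x$ and $T^mx$ occupy the same level-$n$ tower and have the same backward first entry into $J^{(n)}\times G$, i.e. $\tilde R_ny=\tilde R_ny'$. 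Chaining the three equivalences with the $\Pi$-intertwining yields $\mathcal{O}(\tau_\phi)=\mathfrak{T}(\sigma_f)$.

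The step I expect to be the main obstacle is this converse, namely making rigorous that the forward return time of $x$ to $J^{(n)}$ diverges with $n$, so that $x$ and $T^mx$ can be trapped in a single tower; this is precisely where infinite completeness ($\bigcap_nJ^{(n)}=\{0\}$) and the restriction to $\Sigma^0$ (which removes the orbit of $0$) are essential. As an alternative that avoids the geometric trapping argument, one can instead check directly that $\phi$ equals the tail cocycle $\phi_f(p)=\sum_{i\ge0}\big(f(\sigma^ip)-f(\sigma^i\tau p)\big)$: this sum is finite because $\tau$ changes only an initial segment of $p$, and expanding it with the admissibility relation $s(e_{m+1})=t(e_m)$ and the periodic-type identity $A^T\phi=\phi$, equivalently $\sum_{l=0}^{q_j-1}\phi_{s((j,l))}=\phi_j$, telescopes to $\phi_f(p)=\phi(p)$; one then concludes immediately from the general equality $\mathcal{O}(\tau_{\phi_f})=\mathfrak{T}(\sigma_f)$ proved in Appendix \ref{app:orb=tail}.
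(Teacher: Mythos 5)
Your argument is correct and is essentially the paper's own proof: iterate Lemma \ref{lem:projection_sp} to express $\sigma_f^n$ as the backward projection onto the bases of the level-$n$ towers followed by $n$ renormalisations, deduce one inclusion from the fact that this projection moves points only along their $T_\phi$-orbit, and the other by choosing $n$ large enough that a given finite orbit segment avoids $J^{(n)}$, trapping the two points in a single tower $\tilde Z^{(0,nN)}_{j,b}$. Your fallback route via $\phi_f=\phi$ and the general identity $\mathcal{O}(\tau_{\phi_f})=\mathfrak{T}(\sigma_f)$ is also valid and is exactly what the paper records in Appendices \ref{app:orb=tail} and \ref{app:phi=phi_f} as an alternative it does not need.
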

\begin{proof}
Applying Lemma \ref{lem:projection_sp} $k$ times, we see that $\sigma_f^k$ codes the projection of the towers $\tilde{Z}^{(0,kN)}_{j,a}$ onto their 
bases:\\
If $\tilde R^{(k-1)}: \K\times G \to J^{(k)}\times G$ is given by $\tilde R^{(k-1)}(x,a) = T_\phi^{-l_k}(x,a)$, where
\[l_k = \min_{n \geq 0} \{n: T^{-n}(x) \in J^{(k)}\},\]
then for all $(p,a)\in \Sigma \times G$,
\[(\pi\times \id) (\sigma_f^k(p,a)) = (\mathcal{S}\times \id)^k\circ \tilde R^{(k-1)}\,(\pi(p),a).\]

\bigskip
First let us show the inclusion $\mathcal{O}(\tau_{\phi}) \subset \mathfrak{T}(\sigma_f)$.\\
Suppose $p,p' \in \Sigma^{0},\ a,a' \in G$ are such that for some $n>0$, $\tau_\phi^n(p,a) = (p',a')$. If $x=\pi(p), x'=\pi(p')$, 
there exists some sufficiently high ${k}>0$ such 
\[\{T_\phi^r (x,a) : r=1,\dots,n\} \cap J^{({k})}\times G = \emptyset.\]
Then $(x,a)$ and $(x',a')$ are directly above each other in some tower $\tilde{Z}^{(0,{k}N)}_{j,b}$, and hence project down onto the same point. 
Hence $\sigma_f^{{k}}(p,a) = \sigma_f^{{k}}(p',a')$.

\bigskip
Now let us show the converse inclusion $\mathcal{O}(\tau_{\phi}) \supset \mathfrak{T}(\sigma_f)$.\\
Suppose $p,p' \in \Sigma^{0},\ a,a' \in G$ are such that for some $k>0$, $\sigma_f^k(p,a) = \sigma_f^k(p',a')$. Let $x = \pi(p), x'=\pi(p')$. 
Then $(x,a)$ and $(x',a')$ project down to the same point in the base of a tower $\tilde{Z}^{(0,kN)}_{j,b}$, so they must be in the same $T_\phi$ 
orbit. Hence $(p,a)$ and $(p',a')$ are in the same $\tau_\phi$ orbit.

\end{proof}

\begin{minipage}{0.37\textwidth}
\begin{figure}[H]
\centering
\begin{tikzpicture}[scale=5, vertex/.style={circle,inner sep=2pt,fill=blue}]
	\draw[line width=1] (0,0) -- (1,0);
	\draw[line width=0.5] (0,0.1) -- (1,0.1);
	\draw[line width=0.5] (0,0.2) -- (1,0.2);
	\draw[line width=0.5] (0,0.3) -- (1,0.3);
	\draw[line width=0.5] (0,0.4) -- (1,0.4);
	\draw[line width=0.5] (0,0.5) -- (1,0.5);
	\draw[line width=0.5] (0,0.6) -- (1,0.6);
	
	\node[vertex] at (0.3,0.2) {};
	\node[vertex] at (0.3,0) {};
	
	\node[blue] at (0.48,0.25) {$x=\pi(p)$};
	
	\node[blue] at (0.55,-0.07) {$R(x) = \pi(\iota\sigma p)$};
	
	\node[] at (-0.2,0.1) {$\1{Z}_{j}$};

\end{tikzpicture}
\captionsetup{width=.9\linewidth}
\caption{The map $R$ projects points in a tower $\1Z_j$ down onto the base of the tower. Symbolically it is given by $\iota\circ \sigma$.}
\label{fig:projection}
\end{figure}
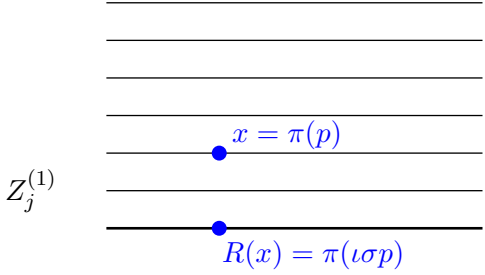
\end{minipage}\hspace{0.01\textwidth}
\begin{minipage}{0.64\textwidth}
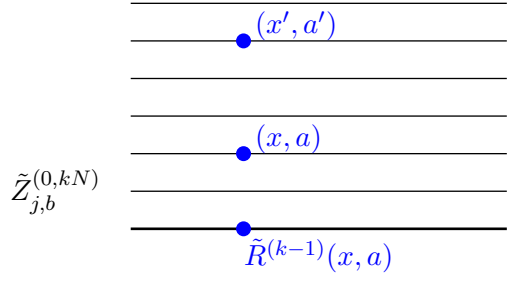
\begin{figure}[H]
\centering
\begin{tikzpicture}[scale=5, vertex/.style={circle,inner sep=2pt,fill=blue}]
	\draw[line width=1] (0,0) -- (1,0);
	\draw[line width=0.5] (0,0.1) -- (1,0.1);
	\draw[line width=0.5] (0,0.2) -- (1,0.2);
	\draw[line width=0.5] (0,0.3) -- (1,0.3);
	\draw[line width=0.5] (0,0.4) -- (1,0.4);
	\draw[line width=0.5] (0,0.5) -- (1,0.5);
	\draw[line width=0.5] (0,0.6) -- (1,0.6);
	
	\node[vertex] at (0.3,0.2) {};
	\node[vertex] at (0.3,0.5) {};
	\node[vertex] at (0.3,0) {};
	
	\node[blue] at (0.42,0.24) {$(x,a)$};
	\node[blue] at (0.44,0.54) {$(x',a')$};
	
	\node[blue] at (0.5,-0.07) {$\tilde{R}^{(k-1)}(x,a)$};
	
	\node[] at (-0.2,0.1) {$\tilde{Z}^{(0,kN)}_{j,b}$};

\end{tikzpicture}
\captionsetup{width=.85\linewidth}
\caption{Proof of Proposition \ref{prop:tail=orb}. 
Two points $(x,a)$ and $(x',a') \in \K \times G$ are in the same $T_\phi$ orbit if and only if for some $k>0$ they are in the same 
tower $\tilde{Z}^{(0,kN)}_{j,b}$ and project down onto the same point in the base.}
\label{fig:projection}
\end{figure}
\end{minipage}

\subsection{The non-periodic case}\label{subsec:non-periodic}
In the non-periodic case, Proposition \ref{prop:tail=orb} still holds. In this section we briefly give the necessary notation and statements of 
the analogues of Lemmas \ref{lem:projection} and \ref{lem:projection_sp} needed for the proof. The proofs follow the proofs in the periodic case.

If the IET $T$ is not periodic under renormalisation, the corresponding 
Bratteli diagram is non-stationary, so it is important to keep track of the level of the diagram in which one is working, this is the main difference 
in the notation.\\

Choosing some increasing sequence $(n_k)$, construct a Bratteli diagram for $T$ with $J^{(k)} = \K^{(n_k)}$.
For $k\geq 0$, denote by $\nk\Sigma$ the set of admissible paths in $\prod_{r\geq k+1} \mathcal{E}_r$, i.e. paths starting at level $k$. 
Correspondingly define the map $\nk\pi: \nk\Sigma \to \nk J$. The adic map $\nk\tau$ on $\nk\Sigma$ then codes $T_k$, in the sense that
$\nk\pi \circ \nk\tau = T_k \circ \nk\pi$.

Introduce also the following definitions, analogously to the periodic case:
\begin{itemize}
	\item The left shift $\sigma: \nk\Sigma \to \Sigma^{(k+1)}$ and the right shift $\iota:\Sigma^{(k+1)} \to \nk\Sigma$.
	\item The map $\nk R: \K \to J^{(k+1)}$, representing the projection to the base of the tower $Z_j^{(0,n_{k+1})}$, 
	is defined as $\nk R(x) = T^{-l}(x)$ for $l = \min\{m\geq 0: T^{-m}(x) \in J^{(k+1)}\}$.
	\item Similarly the projection in the tower for the skew-product, 
	$\nk{\widetilde{R}}: \K \times G \to J^{(k+1)}\times G$ is defined as $\nk{\widetilde R}(x,a) = T_\phi^{-l}(x,a)$.
	\item The function $\nk f: \nk \Sigma \to G$, defined as $\nk f(p) = -S^{T_k}_{l_{k+1}} \phi^{{(n_k)}} \big|_{\nk J_{j_{k+1}}}$ 
	for $p = \big((j_{k+1},l_{k+1}),\dots \big) \in \nk \Sigma$.
	\item Denote by $\sigma_f$ the map that for any $k$ sends $(p,a) \in \nk \Sigma \times G$ to $(\sigma p,a+\nk f(p))$.

\end{itemize}

Then the analogues of Lemmas \ref{lem:projection} and \ref{lem:projection_sp} are:
\begin{lemma}
\begin{enumerate}[a)]
	\item For $p\in \Sigma^{(k+1)}$,
	\[\nk\pi (\iota p) = \pi^{(k+1)}(p).\]
	\item For $p\in \Sigma = \Sigma^{(0)}$ and $k\geq 1$, 
	\[\nk\pi (\sigma^k p) = R^{(k-1)} (\pi (p)).\]
	\item For $(p,a) \in \Sigma \times G$ and $k\geq 1$, 
	\[(\nk\pi \times \id)(\sigma^k_f(p,a)) = {\widetilde R}^{(k-1)}(\pi(p),a).\]
\end{enumerate}
\end{lemma}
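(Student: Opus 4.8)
The plan is to prove the three identities in the order stated, reducing each to the structure of the level-$k$ coding map $\nk\pi(p)=\bigcap_{m>k}\nk J(p_{[k,m]})$, where for $p=(e_{k+1},e_{k+2},\dots)\in\nk\Sigma$ with $e_r=(j_r,l_r)$ the non-stationary analogue of the $J$-map is $\nk J(p_{[k,m]})=T_k^{l_{k+1}}\circ T_{k+1}^{l_{k+2}}\circ\dots\circ T_{m-1}^{l_m}\big(J^{(m)}_{j_m}\big)$. Part a) is essentially immediate from the definitions: the right shift $\iota$ prepends the zeroth-floor edge $(s(e_{k+2}),0)$ at level $k+1$, so in every truncation of $\iota p$ the leading factor is $T_k^{0}=\id$. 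Hence $\nk J((\iota p)_{[k,m]})=J^{(k+1)}(p_{[k+1,m]})$ for all $m$, and intersecting over $m$ gives $\nk\pi(\iota p)=\pi^{(k+1)}(p)$.

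For part b) I would iterate part a). Applying $\iota$ to $\sigma^k p\in\Sigma^{(k)}$ a total of $k$ times produces a path $\iota^k\sigma^k p$ that agrees with $p$ from level $k+1$ onward but carries zeroth-floor edges at levels $1,\dots,k$; iterating part a) yields $\pi(\iota^k\sigma^k p)=\nk\pi(\sigma^k p)$. Comparing the truncations $J(p_m)$ and $J((\iota^k\sigma^k p)_m)$, they share the common tail block $T_k^{l_{k+1}}\circ\dots\circ T_{m-1}^{l_m}(J^{(m)}_{j_m})$ and differ only by the leading composition $\Psi=T^{l_1}\circ T_1^{l_2}\circ\dots\circ T_{k-1}^{l_k}$. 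By Lemma \ref{paths are floors}, $\Psi$ carries the base $J^{(k)}_{j_k}$ to the floor $J(p_k)$ of the tower $Z^{(0,n_k)}_{j_k}$, and therefore restricts to $T^{L}$ there, where $L$ is the height of that floor. Thus $\pi(p)=T^L\big(\pi(\iota^k\sigma^k p)\big)$, i.e. $\nk\pi(\sigma^k p)=T^{-L}(\pi(p))$. Since the floors of $Z^{(0,n_k)}_{j_k}$ below the top do not return to $J^{(k)}$, we have $L=\min\{m\ge 0:T^{-m}(\pi(p))\in J^{(k)}\}$, so $T^{-L}(\pi(p))=R^{(k-1)}(\pi(p))$, as required.

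For part c) the $\K$-coordinate is handled directly by part b): the first coordinate of $(\nk\pi\times\id)(\sigma_f^k(p,a))$ is $\nk\pi(\sigma^k p)=R^{(k-1)}(\pi(p))$, which matches the first coordinate of $\widetilde R^{(k-1)}(\pi(p),a)$. It remains to match the $G$-coordinates. Using $T_\phi^{-1}(x,b)=(T^{-1}x,b-\phi(T^{-1}x))$ and iterating, $\widetilde R^{(k-1)}(\pi(p),a)=T_\phi^{-L}(\pi(p),a)=\big(T^{-L}\pi(p),\,a-\sum_{i=1}^{L}\phi(T^{-i}\pi(p))\big)$, whereas the skew-product gives the $G$-shift $\sum_{r=0}^{k-1}f^{(r)}(\sigma^r p)$. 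So the task reduces to the telescoping identity $\sum_{r=0}^{k-1}f^{(r)}(\sigma^r p)=-\sum_{i=1}^{L}\phi(T^{-i}\pi(p))$, which I would prove by induction on $k$. The base case $r=0$ is the single-level computation already used in Lemma \ref{lem:projection_sp}: since $\pi(p)$ sits in floor $l_1$ of its level-$1$ tower, $f^{(0)}(p)=-S^T_{l_1}\phi\big|_{\1J_{j_1}}=-\sum_{i=1}^{l_1}\phi(T^{-i}\pi(p))$. For the inductive step, $\sigma^k p$ codes $T^{-L_k}\pi(p)\in J^{(k)}$ by part b), and the term $f^{(k)}(\sigma^k p)=-S^{T_k}_{l_{k+1}}\phi^{(n_k)}\big|_{\nk J_{j_{k+1}}}$ must account for exactly the $T$-cocycle accumulated in descending from $J^{(k)}$ down to $J^{(k+1)}$.

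The main obstacle is precisely this last step: one must verify that the level-dependent cocycle $f^{(k)}$, which is defined through the induced map $T_k$ and the renormalised vector $\phi^{(n_k)}=(A^{(0,n_k)})^T\phi$, reassembles into the genuine $T$-Birkhoff sum $-\sum_{i=L_k+1}^{L_{k+1}}\phi(T^{-i}\pi(p))$. This is exactly where Lemma \ref{lem:BS} is needed: it translates a single step of $T_k$ applied to $\phi^{(n_k)}$ into the corresponding block of $T$-steps applied to $\phi$, and summing these blocks over the $l_{k+1}$ steps telescopes to the required $\phi$-sum over the full descent from level $k$ to level $k+1$. Everything else is a direct unwinding of the coding definitions along the lines of the periodic case.
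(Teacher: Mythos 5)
Your proposal is correct and follows essentially the route the paper intends: the paper gives no separate proof here, stating only that ``the proofs follow the proofs in the periodic case,'' and your argument is precisely the non-stationary adaptation of Lemmas \ref{lem:projection} and \ref{lem:projection_sp} together with the $k$-fold iteration already used in Proposition \ref{prop:tail=orb}. You correctly replace the scaling map $\mathcal{S}$ (unavailable outside the periodic setting) by a direct comparison of truncations in part a), and your identification of Lemma \ref{lem:BS} as the tool that converts each $T_k$-step of $\phi^{(n_k)}$ into a block of $T$-steps of $\phi$, telescoping to $-\sum_{i=1}^{L}\phi(T^{-i}\pi(p))$, is exactly the computation needed for part c).
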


\section{Applying the ANSS theorem}\label{sec:ANSS}
Here we reproduce the result from \cite{ANSS} and use it to deduce Theorem \ref{thm:main}.
\subsection{Statement of the theorem}

\begin{defn}[\cite{ANSS}]\label{defn:phi_f}
Given a function $f: \Sigma \to G$, define for $p\in \Sigma\setminus\Sigma_\textrm{max}$ 
\[\phi_f(p) = \sum_{i=0}^{\infty} \big(f(\sigma^i p ) - f(\sigma^i \tau p )\big).\] 
The map $\phi_f$ is well-defined on $\Sigma\setminus\Sigma_\textrm{max}$:
the sum is finite since by definition of the adic map $\tau$, 
$p$ and $\tau p$ lie in the same tail equivalence class of $\sigma$, hence for some $N$, $\forall n \geq N,\ \sigma^n(p) = \sigma^n(\tau p)$.
\end{defn}

By construction of $\phi_f$, the following equality of relations is satisfied:
\begin{equation}\label{eqn:tail=orb}
\mathcal{O}(\tau_{\phi_f}) = \mathfrak{T}(\sigma_f).
\end{equation}
For a proof of this equality see Appendix \ref{app:orb=tail}.\\

Note that from Proposition \ref{prop:tail=orb} it immediately follows that 
\begin{equation}\label{eqn:orb=orb}
\mathcal{O}(\tau_{\phi_f}) = \mathcal{O}(\tau_{\phi}),
\end{equation}
allowing us to deduce Theorem \ref{thm:main} from the theorem of \cite{ANSS} (Theorem \ref{ANSS thm} below).

In fact it is true 
that $\phi_f = \phi$, so $\tau_{\phi_f}$ and $\tau_\phi$ are the same map, but this is not needed for our proof of Theorem \ref{thm:main}. 
Nevertheless we give a proof of 
this equality in Appendix \ref{app:phi=phi_f} for completeness.

\begin{defn}
Say a function $f:\Sigma \to G$ is \textbf{finite memory} if there exists a $K$ such that for all $p = (e_1,e_2,\dots) \in \Sigma$,
$f(p) = f(e_1,\dots,e_K)$.\\

Say $f$ is \textbf{periodic} if there exist:
\begin{itemize}
\item A non-constant measurable $g:\Sigma \to S^1$,
\item A homomorphism $\gamma \in \hat G = \textrm{Hom}(G,S^1)$,
\item and a $z\in S^1$, such that
\[\gamma \circ f = z \frac{g\circ \sigma}{g}.\]
If $f$ is not periodic, say that it is \textbf{aperiodic}.

\end{itemize}
\end{defn}

\begin{thm}\cite[Theorems 2.1 and 2.2]{ANSS}\label{ANSS thm}
Suppose that $\sigma:\Sigma\to\Sigma$ is topologically mixing, $f:\Sigma^0 \to G$ is finite memory and aperiodic. Then:
\begin{enumerate}[(I)]
		\item For every homomorphism $\psi: G \to \R$ there exists a unique (up to scaling) measure $\nu_\psi$ on $\Sigma^0$ which is quasi-invariant under $\tau$ with 
		$\dd{\nu_\psi \circ \tau}{\nu_\psi} = e^{-\psi \circ \phi_f}$, and this measure is non-atomic. 
		
		Given such a $\nu_\psi$ we define the Maharam measure $\mu_\psi$ on $\Sigma^0\times G$, with 
		\[\ud\mu_\psi(p,a) = \ud \nu_\psi(p) e^{\psi(a)} \ud \mu_G (a),\] where $\mu_G$ is the Haar measure on $G$.
		
		\item Each Maharam measure $\mu_\psi$ is an ergodic invariant Radon measure for $\tau_{\phi_f}$.
		\item Every ergodic invariant Radon measure for $\tau_{\phi_f}$ is up to scaling a Maharam measure for some homomorphism $\psi$.
	\end{enumerate}
\end{thm}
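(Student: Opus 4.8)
The plan is to recast the entire statement as a classification of Radon measures invariant under the tail equivalence relation of the skew-product shift, and then to bring in thermodynamic formalism for $\sigma$. By the equality of relations \eqref{eqn:tail=orb}, $\mathcal{O}(\tau_{\phi_f}) = \mathfrak{T}(\sigma_f)$, a Radon measure on $\Sigma^0 \times G$ is invariant (resp.\ ergodic) for $\tau_{\phi_f}$ if and only if it is invariant (resp.\ ergodic) for $\mathfrak{T}(\sigma_f)$. Thus all three parts become statements about $\mathfrak{T}(\sigma_f)$-invariant measures, which I would attack via the transfer operator of the topologically mixing shift $\sigma$ twisted by the cocycle $f$.

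For part (I) I would construct $\nu_\psi$ as a \emph{conformal measure}. Fixing $\psi \in \Hom(G,\R)$, consider the weighted Ruelle operator $L_\psi$ whose weights encode $e^{-\psi\circ f}$ together with the branching of $\sigma$. Since $\sigma$ is topologically mixing and $f$ is finite memory (so the potential is locally constant, hence H\"older), the Ruelle--Perron--Frobenius theorem provides a simple leading eigenvalue, a strictly positive eigenfunction, and a unique eigenmeasure. A $\sigma$-conformal measure is automatically invariant under the tail holonomies with Radon--Nikodym derivatives determined by $\psi\circ f$, and since $\mathcal{O}(\tau)=\mathfrak{T}(\sigma)$ this yields a $\tau$-quasi-invariant $\nu_\psi$ with $\dd{\nu_\psi \circ \tau}{\nu_\psi} = e^{-\psi\circ\phi_f}$. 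Uniqueness up to scale is uniqueness of the eigenmeasure, and non-atomicity follows from mixing and positivity of the eigenfunction.

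For part (II), invariance of $\mu_\psi$ is the direct Maharam computation sketched in the introduction, using precisely the identity of part (I). Ergodicity is the more delicate half: I would show that $\mathfrak{T}(\sigma_f)$ acts ergodically on $(\Sigma^0 \times G, \mu_\psi)$ by a Hopf-type argument, disintegrating $\mu_\psi$ along tail fibres and invoking the spectral gap of $L_\psi$ to force every tail-invariant function to be a.e.\ constant.

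The core of the theorem, and the main obstacle, is part (III). Given an ergodic invariant Radon measure $\mu$ for $\tau_{\phi_f}$, equivalently a $\mathfrak{T}(\sigma_f)$-invariant ergodic Radon measure, I would analyse how $\mu$ transforms under the tail holonomies and under the commuting $G$-translation; the relevant Radon--Nikodym cocycle is governed by the Birkhoff sums $S_n f$. The key step is to prove that the conditional measures of $\mu$ in the $G$-direction are genuinely exponential, i.e.\ proportional to $e^{\psi(a)}\,\ud\mu_G(a)$ for a single $\psi$. This is where \emph{aperiodicity} is indispensable: it is exactly the hypothesis ruling out lattice obstructions and enabling a \emph{local limit theorem} for the distribution of $S_n f$ along tail classes. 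I expect this local limit theorem, together with the ratio-ergodic control it gives on the number of tail-equivalent points landing at each level of $G$, to be the hardest ingredient. Once it is in place, the fibrewise exponential form is forced, the parameter $\psi$ is identified as the logarithmic growth rate of $\mu$ under $G$-translation, and $\mu$ coincides up to scale with the Maharam measure $\mu_\psi$ of part (I).
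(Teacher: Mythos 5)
The first thing to say is that the paper does not prove this statement at all: Theorem \ref{ANSS thm} is quoted verbatim from \cite{ANSS} (their Theorems 2.1 and 2.2) and used as a black box, so there is no internal proof to compare your proposal against. The author's contribution lies elsewhere, namely in constructing the cocycle $f$ and verifying the hypotheses (finite memory, aperiodicity) so that this imported theorem applies.

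Judged on its own terms, your outline is a faithful reconstruction of the strategy actually used in \cite{ANSS}: conformal (RPF eigen-)measures for the twisted transfer operator give part (I), a Hopf/exactness argument gives ergodicity in part (II), and the classification in part (III) rests on a local limit theorem for $S^\sigma_n f$ along tail classes, with aperiodicity entering precisely to kill the lattice obstruction, i.e.\ to ensure the twisted operators $L_{\psi,\gamma}$ for nontrivial $\gamma \in \hat G$ have strictly smaller spectral radius. Your reduction from $\tau_{\phi_f}$-invariance to $\mathfrak{T}(\sigma_f)$-invariance via the equality of relations is also legitimate, since $\tau_{\phi_f}$ is invertible on $\Sigma^0\times G$ and so generates its orbit relation. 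The genuine gap is that every hard step is announced rather than executed: the spectral analysis of $L_\psi$ on the relevant function space, the disintegration argument for ergodicity, and above all the local limit theorem and the deduction that the conditional measures in the $G$-direction are exponential are each substantial pieces of analysis (they occupy most of \cite{ANSS}), and nothing in your text would let a reader reconstruct them. As a roadmap it is accurate; as a proof it is not one, and within the present paper the correct move is simply to cite \cite{ANSS}, as the author does.
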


\begin{rem}
In fact $f$ having finite memory is required only for part (III) whereas for parts (I) and (II) (Theorem 2.1 in \cite{ANSS}) 
it suffices for $f$ to be H\"older.
\end{rem}

We will apply this theorem to the function $f$ defined in Definition \ref{defn:f}.
To be able to do this we need $f$ to be finite memory and aperiodic. $f$ is finite memory directly by construction, 
since it depends only on the first edge of a path in $\Sigma$. 
\subsection{Aperiodicity of $f$}

To check that the function $f$ is aperiodic, we use a similar strategy to Pollicott and Sharp in \cite[Proposition 3.1]{PS}.\\

For $n \geq 1$, let $\textrm{Fix}_n(\sigma) = \{p\in \Sigma^0: \sigma^n p = p\}$ be the set of fixed points of $\sigma^n$. Define the set
\[\Delta = \{S^\sigma_n f (p_1) - S^\sigma_n f (p_2): p_1,p_2 \in \textrm{Fix}_n(\sigma) \text{ for some } n\geq 1\}.\]
It is a standard fact that $\Delta$ is a subgroup of $G$. (For completeness we give a proof of this fact 
in Appendix \ref{app:Delta}.) Our aim will be to show that $\Delta = G$. 

Recall that we chose $N$ to be such that $\widetilde{\mathcal{R}}^N(\F_\phi) = \F_\phi$. 
However $N$ does not need to be the smallest period, we can take any multiple of the period as $N$ and this has no effect on all preceding arguments. 
The key here will be to take $N$ large enough to make the proof of $\Delta = G$ simple.

The larger $N$ is, the smaller is $\1J$. We will take $N$ sufficiently large that the towers $Z^{(1)}_j$ have the same beginning for all $j$, 
in the following sense:

\begin{lemma}\label{same tower lemma}
If $N$ is large enough, then there exists an $M>0$ for which the following holds:
\begin{itemize}
\item $\displaystyle \min_{j=1,...,d} q_j > M + 1$.
\item For $n \leq M$, $T^n\big(\1J_j\big) \subset \K_{i(n)}$ where $i(n)\in \{1,...,d\}$ only depends on $n$ and not on $j$.
\item The set $\{i(n): 1 \leq n \leq M-1\}$ contains each of the indices $1,...,d$.
\end{itemize}
\end{lemma}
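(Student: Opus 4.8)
The plan is to exploit the minimality of $T$ together with the fact that, as the period multiple $N$ grows, the base $\1J=\K^{(N)}$ is a decreasing family of clopen sets shrinking to the single point $0\in\K$. First I would fix $M$ using minimality alone. Since $T$ is minimal, the orbit $\{T^n(0):n\geq 1\}$ is dense and, as a minimal infinite system has no periodic points, consists of points with $T^n(0)\neq 0$ for all $n\geq 1$. Density implies that each of the finitely many nonempty clopen sets $\K_1,\dots,\K_d$ is entered by this orbit after finitely many steps, so I may choose $M$ large enough that the finite segment $\{T^n(0):1\leq n\leq M-1\}$ meets every $\K_i$. I then set $i(n)$ to be the unique index with $T^n(0)\in\K_{i(n)}$ for $0\leq n\leq M+1$; by construction $\{i(n):1\leq n\leq M-1\}=\{1,\dots,d\}$, which will deliver the third bullet once the orbit of each $\1J_j$ is shown to shadow that of $0$.

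Next I would build a single clopen neighbourhood $U^\ast$ of $0$ that simultaneously controls the itinerary and forces a large return time. On the one hand, $W=\bigcap_{n=0}^{M+1}T^{-n}\big(\K_{i(n)}\big)$ is a clopen neighbourhood of $0$ (a finite intersection of clopen sets, since each $\K_i$ is clopen and $T$ is a homeomorphism), and on $W$ the itinerary of the first $M+1$ steps agrees with that of $0$. On the other hand, for each $1\leq n\leq M+1$ we have $T^n(0)\neq 0$, so choosing disjoint clopen neighbourhoods of $0$ and of $T^n(0)$ produces a clopen $V_n\ni 0$ with $T^n(V_n)\cap V_n=\emptyset$. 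Setting $U^\ast=W\cap\bigcap_{n=1}^{M+1}V_n$ gives a clopen neighbourhood of $0$ with
\[
U^\ast\subset W\qquad\text{and}\qquad T^n(U^\ast)\cap U^\ast=\emptyset\ \ \text{for } 1\leq n\leq M+1.
\]

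Finally I would send $N$ to infinity through multiples of the period. Since $\K^{(N)}$ is a nested decreasing family with $m_K(\K^{(N)})\to 0$ and hence $\bigcap_N\K^{(N)}=\{0\}$, for $N$ large enough $\1J\subset U^\ast$, and so $\1J_j\subset U^\ast$ for every $j$. The three conclusions then follow at once. For $0\leq n\leq M$ we have $T^n(\1J_j)\subset T^n(U^\ast)\subset\K_{i(n)}$ because $U^\ast\subset W$, which is the second bullet with $i(n)$ independent of $j$. For $1\leq n\leq M+1$, $T^n(\1J_j)\subset T^n(U^\ast)$ is disjoint from $U^\ast\supset\1J$, hence $T^n(\1J_j)\cap\1J=\emptyset$; therefore no return to $\1J$ occurs before step $M+2$, giving $q_j>M+1$ for every $j$ and so the first bullet. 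The third bullet is immediate, since the itinerary of each $\1J_j$ coincides with $i(\cdot)$ and $i(1),\dots,i(M-1)$ exhaust $\{1,\dots,d\}$ by the choice of $M$.

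The only genuinely substantive input is the minimality of $T$, which underlies both the non-periodicity of the points $T^n(0)$ and the eventual covering of all indices $\K_1,\dots,\K_d$; everything else is bookkeeping. The main point to get right is the order of the quantifiers: $M$ and the clopen neighbourhood $U^\ast$ are defined purely from the fixed map $T$ and the orbit of $0$, and only afterwards is the renormalisation window $N$ taken large (through multiples of the period). Keeping $T$ and its orbit of $0$ fixed while shrinking $\1J=\K^{(N)}$ inside $U^\ast$ is what makes $M$ uniform and the argument clean.
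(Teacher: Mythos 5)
Your proof is correct and follows essentially the same strategy as the paper: define the itinerary $i(n)$ from the orbit of $0$ and fix $M$ first (the paper invokes infinite-completeness where you invoke minimality --- both suffice), and only afterwards take $N$ large so that $\1J=\K^{(N)}$ is small enough for its first $M+1$ images to shadow the orbit of $0$. The only cosmetic differences are that you control the itinerary with clopen neighbourhoods while the paper uses that $T^n\big(\1J\big)$ is an interval with left endpoint $T^n(0)$ avoiding the marked points, and that your disjointness argument for $q_j>M+1$ is slightly more explicit than the paper's one-line remark.
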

\begin{proof}
For all $n\geq 0$, let $i(n)$ be such that 
\[ T^n(0) \in \K_{i(n)}.\]
Let $M$ be large enough that 
\[\{i(n): 1 \leq n \leq M-1\} = \{1,\dots,d\}.\]
Such an $M$ exists by infinite-completeness of $T$.

We take $N$ large enough such that $\1J$ is sufficiently small to ensure that 
\[\bigcup_{0 \leq n \leq M} T^n\big(\1J\big) \cap \{l(x_1),\dots,l(x_{d-1})\} = \emptyset.\]

This means that if $y$ is the right end-point of $\1J$, then for all $n\leq M$, there are no $l(x_i)$ between $T^n(0)$ and $T^n(y)$, and hence
\[ T^n\big(\1J\big) \subset \K_{i(n)}.\]

Finally we make sure $N$ is sufficiently large that $\min_{j=1,...,d} q_j > M + 1$.
\end{proof}

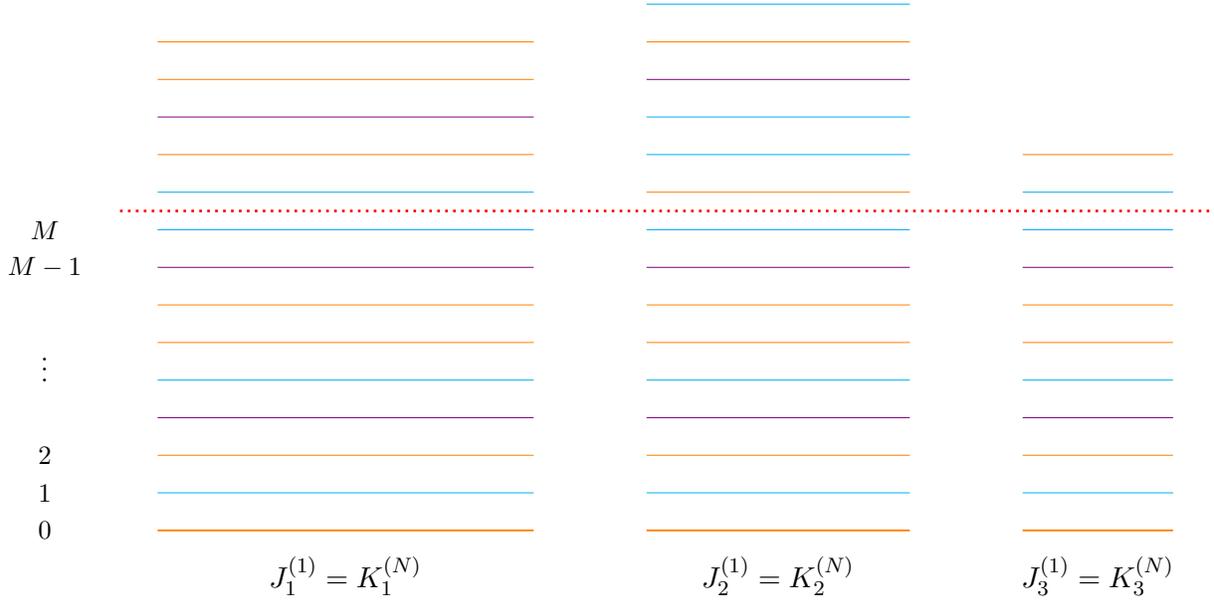
\begin{figure}[H]
\centering
\begin{tikzpicture}[scale=5]
	\draw[line width = 0.7,orange] (0,0) -- (1,0);
	\draw[line width = 0.3,cyan] (0,0.1) -- (1,0.1);
	\draw[line width = 0.3,orange] (0,0.2) -- (1,0.2);
	\draw[line width = 0.3,violet] (0,0.3) -- (1,0.3);
	\draw[line width = 0.3,cyan] (0,0.4) -- (1,0.4);
	\draw[line width = 0.3,orange] (0,0.5) -- (1,0.5);
	\draw[line width = 0.3,orange] (0,0.6) -- (1,0.6);
	\draw[line width = 0.3,violet] (0,0.7) -- (1,0.7);
	\draw[line width = 0.3,cyan] (0,0.8) -- (1,0.8);
	\draw[line width = 0.3,cyan] (0,0.9) -- (1,0.9);
	\draw[line width = 0.3,orange] (0,1) -- (1,1);
	\draw[line width = 0.3,violet] (0,1.1) -- (1,1.1);
	\draw[line width = 0.3,orange] (0,1.2) -- (1,1.2);
	\draw[line width = 0.3,orange] (0,1.3) -- (1,1.3);
	
	\draw[line width = 0.7,orange] (1.3,0) -- (2,0);
	\draw[line width = 0.3,cyan] (1.3,0.1) -- (2,0.1);
	\draw[line width = 0.3,orange] (1.3,0.2) -- (2,0.2);
	\draw[line width = 0.3,violet] (1.3,0.3) -- (2,0.3);
	\draw[line width = 0.3,cyan] (1.3,0.4) -- (2,0.4);
	\draw[line width = 0.3,orange] (1.3,0.5) -- (2,0.5);
	\draw[line width = 0.3,orange] (1.3,0.6) -- (2,0.6);
	\draw[line width = 0.3,violet] (1.3,0.7) -- (2,0.7);
	\draw[line width = 0.3,cyan] (1.3,0.8) -- (2,0.8);
	\draw[line width = 0.3,orange] (1.3,0.9) -- (2,0.9);
	\draw[line width = 0.3,cyan] (1.3,1) -- (2,1);
	\draw[line width = 0.3,cyan] (1.3,1.1) -- (2,1.1);
	\draw[line width = 0.3,violet] (1.3,1.2) -- (2,1.2);
	\draw[line width = 0.3,orange] (1.3,1.3) -- (2,1.3);
	\draw[line width = 0.3,cyan] (1.3,1.4) -- (2,1.4);
	
	\draw[line width = 0.7,orange] (2.3,0) -- (2.7,0);
	\draw[line width = 0.3,cyan] (2.3,0.1) -- (2.7,0.1);
	\draw[line width = 0.3,orange] (2.3,0.2) -- (2.7,0.2);
	\draw[line width = 0.3,violet] (2.3,0.3) -- (2.7,0.3);
	\draw[line width = 0.3,cyan] (2.3,0.4) -- (2.7,0.4);
	\draw[line width = 0.3,orange] (2.3,0.5) -- (2.7,0.5);
	\draw[line width = 0.3,orange] (2.3,0.6) -- (2.7,0.6);
	\draw[line width = 0.3,violet] (2.3,0.7) -- (2.7,0.7);
	\draw[line width = 0.3,cyan] (2.3,0.8) -- (2.7,0.8);
	\draw[line width = 0.3,cyan] (2.3,0.9) -- (2.7,0.9);
	\draw[line width = 0.3,orange] (2.3,1) -- (2.7,1);
	
	\draw[dotted,red,line width = 1] (-0.1,0.85) -- (2.8,0.85);
	
	\node at (-0.3,0) {\small $0$};
	\node at (-0.3,0.1) {\small $1$};
	\node at (-0.3,0.2) {\small $2$};
	
	\node at (-0.3,0.45) {\small $\vdots$};
	
	\node at (-0.3,0.7) {\small $M-1$};
	\node at (-0.3,0.8) {\small $M$};
	
	\node at (0.5,-0.12) {$\1J_1 = \K^{(N)}_1$};
	\node at (1.65,-0.12) {$\1J_2 = \K^{(N)}_2$};
	\node at (2.5,-0.12) {$\1J_3 = \K^{(N)}_3$};

\end{tikzpicture}
\caption{Towers $\1Z_j$ with the same beginning - the different sets $\K_i$ containing the floors are shown in different colours.}
\end{figure}

\begin{prop}
If $N$ is chosen sufficiently large that the conditions in Lemma \ref{same tower lemma} hold, then $\Delta = G$.
\end{prop}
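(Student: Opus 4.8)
The plan is to use the ``common beginning'' of the towers $\1Z_j$ provided by Lemma \ref{same tower lemma} to manufacture explicit $\sigma$-fixed points whose $f$-values differ by a single $\phi_i$. Since we have assumed that the image of $\phi$ generates $G$, the elements $\phi_1,\dots,\phi_d$ generate $G$; as $\Delta$ is a subgroup of $G$, it therefore suffices to show that each $\phi_i$ lies in $\Delta$.

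First I would record the precise consequence of Lemma \ref{same tower lemma} that we exploit: for $0\leq n\leq M$ the source of the edge $(j,n)$ is $s((j,n))=i(n)$, \emph{independently of $j$}. In particular, taking $j=i(n)$, the edge $e_n:=(i(n),n)$ satisfies $t(e_n)=s(e_n)=i(n)$, so it is a genuine self-loop; and since $n\leq M<q_{i(n)}-1$ it is a legitimate, non-maximal edge, while for $n\geq 1$ it is also non-minimal. Hence the constant path $p^{(n)}:=(e_n,e_n,\dots)$ is admissible and fixed by $\sigma$, i.e.\ $p^{(n)}\in\textrm{Fix}_1(\sigma)$. Because $e_n$ is neither maximal nor minimal, $p^{(n)}$ is neither eventually maximal nor eventually minimal, so by the description of $\Sigma\setminus\Sigma^0$ in Lemma \ref{lem:minmax} we have $p^{(n)}\in\Sigma^0$ for every $1\leq n\leq M$.

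Next I would read off the $f$-value directly from Definition \ref{defn:f}. The first edge of $p^{(n)}$ is $(i(n),n)$, and $s((i(n),l))=i(l)$ for all $l\leq M$, so
\[
f(p^{(n)}) = -\sum_{l=0}^{n-1}\phi_{s((i(n),l))} = -\sum_{l=0}^{n-1}\phi_{i(l)}.
\]
Since the period is $1$ we have $S^\sigma_1 f = f$, and therefore, for $2\leq n\leq M$,
\[
S^\sigma_1 f(p^{(n)}) - S^\sigma_1 f(p^{(n-1)}) = -\phi_{i(n-1)}.
\]
Both $p^{(n)}$ and $p^{(n-1)}$ lie in $\textrm{Fix}_1(\sigma)\cap\Sigma^0$ in this range, so $\phi_{i(n-1)}\in\Delta$ for every $1\leq n-1\leq M-1$. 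By the last bullet of Lemma \ref{same tower lemma}, $\{i(l):1\leq l\leq M-1\}=\{1,\dots,d\}$, whence $\phi_i\in\Delta$ for all $i$, and so $\Delta=G$.

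The only delicate point is the bookkeeping that keeps every path inside $\Sigma^0$: this is exactly why we restrict to indices $n\geq 1$ (which makes $e_n$ non-minimal) and why the bound $\min_j q_j>M+1$ from Lemma \ref{same tower lemma} is needed (which makes $e_n$ non-maximal). Notice that the would-be path $p^{(0)}$ is the excluded minimal path, but we never need it, since the indices $i(1),\dots,i(M-1)$ already exhaust $\{1,\dots,d\}$. Everything else is a direct unwinding of Definition \ref{defn:f} together with the $j$-independence of $s((j,n))$ for $n\leq M$; the substantive work has already been done in Lemma \ref{same tower lemma}.
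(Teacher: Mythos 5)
Your proposal is correct and follows essentially the same route as the paper: the same self-loop edges $(i(n),n)$ giving constant $\sigma$-fixed paths, the same computation $f(p^{(n)})=-\sum_{l=0}^{n-1}\phi_{i(l)}$, and the same telescoping of consecutive differences to extract each $\phi_{i(l)}$ for $1\leq l\leq M-1$. The only (welcome) addition is your explicit check that the constant paths lie in $\Sigma^0$, a point the paper's proof passes over silently.
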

\begin{proof}
For $1 \leq n\leq M$, 
\[T^n \Big(\1J_{i(n)}\Big) \subset \K_{i(n)},\] so
the edge $e = (i(n),n)\in \mathcal{E}$ has $s(e)=t(e)=i(n)$.\\

Hence the path $p(n) = (e,e,e,...)$ is admissible and a fixed point of $\sigma$, and as $0<n<q_{i(n)}-1$, the edge $e$ is not maximal, 
so the path $p(n)$ is not maximal.
Define such a path $p(n)$ for all $1\leq n \leq M$.\\

Then 
\[f(p_n) = -S^T_n \phi\ \big|_{J^{(1)}_{i(n)}} = -\sum_{r=0}^{n-1} \phi_{i(r)}.\]
As $p(n),p(n+1) \in \textrm{Fix}_1(\sigma)$, 
\[f(p(n))-f(p(n+1)) = \phi_{i(n)} \in \Delta.\]

Thus $\Delta$ contains $\{\phi_{i(n)}: 1\leq n\leq M-1\} = \{\phi_j: j=1,...,d\}$. As we assumed that the set of values
$\{\phi_j: j=1,...,d\}$ generates $G$, we deduce that $\Delta = G$.
\end{proof}

Finally we deduce that $f$ is aperiodic.
 
\begin{prop}\label{prop:aperiodic}
If $N$ is chosen sufficiently large, then the function $f$ is aperiodic.
\end{prop}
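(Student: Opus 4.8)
The plan is to argue by contradiction, converting the equality $\Delta = G$ from the preceding proposition into a rigidity statement that excludes periodicity. So suppose $f$ were periodic, witnessed by a non-constant measurable $g:\Sigma \to S^1$, a character $\gamma \in \hat G$ and a constant $z \in S^1$ with $\gamma \circ f = z\,(g\circ\sigma)/g$ holding almost everywhere with respect to a Gibbs measure of full support (for which topological mixing of $\sigma$ gives measure-theoretic mixing). Following the strategy of Pollicott--Sharp, the aim is to evaluate this cohomological relation along periodic orbits of $\sigma$ and thereby show that $\gamma$ annihilates $\Delta$. Before doing so I would upgrade $g$ to a genuinely continuous solution: since $f$ is finite memory, $\gamma\circ f$ is locally constant, hence H\"older, and as $\sigma$ is a topologically mixing subshift of finite type, the measurable Liv\v{s}ic regularity theorem lets me replace $g$ by a continuous function satisfying the same identity everywhere.

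With $g$ continuous I can substitute periodic points. For any $p\in\mathrm{Fix}_n(\sigma)$, applying $\gamma$ to the Birkhoff sum and telescoping the product $\prod_{i=0}^{n-1}\gamma\big(f(\sigma^i p)\big) = z^n\, g(\sigma^n p)/g(p)$, then using $\sigma^n p = p$, yields $\gamma\big(S^\sigma_n f(p)\big) = z^n$. Consequently, for any two points $p_1,p_2\in\mathrm{Fix}_n(\sigma)$ of the same period, $\gamma\big(S^\sigma_n f(p_1) - S^\sigma_n f(p_2)\big) = z^n (z^n)^{-1} = 1$. Since such differences are exactly the generators of $\Delta$, this gives $\gamma|_\Delta \equiv 1$. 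Invoking the previous proposition, for $N$ chosen large enough we have $\Delta = G$, so triviality of $\gamma$ on $\Delta$ forces $\gamma \equiv 1$. Then $\gamma\circ f \equiv 1$, and the defining identity collapses to $g\circ\sigma = z^{-1} g$, exhibiting $g$ as a measurable eigenfunction of $\sigma$. As the Gibbs measure is mixing and hence weakly mixing, $\sigma$ admits no non-constant measurable eigenfunctions: if $z^{-1}=1$ then ergodicity forces $g$ to be constant almost everywhere, and if $z^{-1}\neq 1$ there is no eigenfunction at all. Either case contradicts the non-constancy of $g$, so $f$ is aperiodic.

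The step I expect to be the main obstacle is the middle one: justifying that the merely measurable $g$ provided by the periodicity hypothesis may be taken continuous, so that evaluation at periodic points is legitimate. This is precisely where the Liv\v{s}ic regularity of measurable coboundaries over mixing subshifts of finite type is needed, and it relies on the full support of the reference measure together with topological mixing of $\sigma$. Once continuity is secured, the periodic-orbit computation and the input $\Delta = G$ combine cleanly, and the closing eigenfunction--versus--weak-mixing contradiction is routine.
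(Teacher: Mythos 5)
Your argument is correct and follows essentially the same route as the paper: evaluate the cohomological identity along periodic orbits to conclude $\gamma\big(S^\sigma_n f(p_1)-S^\sigma_n f(p_2)\big)=1$, hence $\gamma$ annihilates $\Delta$, invoke $\Delta = G$ from the preceding proposition to force $\gamma \equiv 1$, and then rule out a non-constant eigenfunction $g$. The only divergence is your Liv\v{s}ic regularity step: the paper reads $\gamma\circ f = z\,(g\circ\sigma)/g$ as a pointwise identity and substitutes periodic points directly into the merely measurable $g$, so your upgrade of $g$ to a continuous solution is a harmless extra precaution (and a prudent one if the identity is only assumed to hold almost everywhere).
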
 
\begin{proof}
Suppose for contradiction that $f$ is periodic, meaning that there exist:

\begin{itemize}
\item A non-constant measurable $g:\Sigma \to S^1$,
\item A homomorphism $\gamma \in \hat G = \textrm{Hom}(G,S^1) \cong \mathbb{T}^m$,
\item and a $z\in S^1$, such that
\begin{equation}\label{periodicity equation}
\gamma \circ f = z \frac{g\circ \sigma}{g}.
\end{equation}
\end{itemize}
\bigskip

The homomorphism $\gamma$ must have the form
\[\gamma(a) = e^{2\pi i \langle a, b \rangle},\ \text{ for some } b \in \R^m.\]

Suppose $p \in \mathrm{Fix_n}(\sigma)$. Then $g(\sigma^n p) = g(p)$, so 
iterating the recurrence (\ref{periodicity equation}) $n$ times, we deduce

\begin{equation}\label{z equation}
 z^n = z^n \frac{g\big(\sigma^{n}(p)\big)}{g(p)} = \prod_{i=0}^{n-1} z\frac{g\big(\sigma^{i+1}(p)\big)}{g\big(\sigma^i(p)\big)} = \prod_{i=0}^{n-1} \gamma \circ f\big(\sigma^i(p)\big)
 = \scalebox{1.2}{ $e^{2\pi i \langle S^\sigma_n f(p), b\rangle}$ }.
\end{equation}
 
For any $\delta \in \Delta = G$, we have $\delta = S^\sigma_n f\ (p_1) - S^\sigma_n f\ (p_2)$ for some $n$ and 
$p_1,p_2 \in \text{Fix}_n(\sigma)$, and so
\[\scalebox{1.2}{$\displaystyle e^{2\pi i\langle \delta, b\rangle} = \frac{e^{2\pi i \langle S^\sigma_n f(p_1), b\rangle}}{e^{2\pi i \langle S^\sigma_n f(p_2), b\rangle}} 
= \frac{z^n}{z^n} = 1$}.\]
 
Thus
\[ \langle \delta, b\rangle \in \Z \ \text{ for all } \ \delta  \in \Delta = G = \Z^m.\]

Hence we deduce that $b\in \Z^m$, meaning that $\gamma$ is trivial. 
As $\rm{Fix}_1(\sigma) \neq \emptyset$, the equation (\ref{z equation}) implies that $z=1$ and therefore $g$ must be constant.

Thus we have a contradiction and so $f$ is aperiodic.
\end{proof}

\subsection{Proof of Theorem \ref{thm:main}}
\begin{proof}

Proposition \ref{prop:aperiodic} allows us to apply Theorem \ref{ANSS thm} with our function $f$ to 
deduce that the ergodic invariant Radon measures for $\tau_{\phi_f}$ are precisely the Maharam measures, which are non-atomic.

We know that the orbit equivalence relations of $\tau_\phi$ and $\tau_{\phi_f}$ are equal (\ref{eqn:orb=orb}),
 so the invariant sets of $\tau_{\phi_f}$ and $\tau_\phi$ are 
the same, and hence the ergodic measures are the same. Hence we deduce that the ergodic invariant Radon measures for 
$\tau_{\phi}\big|_{\Sigma^0\times G}$ are precisely 
the Maharam measures.

What is left to show is that there is a bijection between the invariant Radon measures for $\tau_\phi\big|_{\Sigma^0\times G}$ and those for 
$T_\phi\big|_{K\times G}$. \\

If $K^0 = \pi(\Sigma^0)$, then $\pi\times \id$ is a conjugacy between $\tau_\phi\big|_{\Sigma^0\times G}$ and $T_\phi\big|_{K^0\times G}$. As $\pi$ is a 
homeomorphism, pushing forward or pulling back a measure by $\pi$ preserves the property of being Radon, so there is a bijection between the 
invariant Radon measures of $\tau_\phi\big|_{\Sigma^0\times G}$ and $T_\phi\big|_{K^0\times G}$.
 Thus we conclude that the invariant Radon
measures of $T_\phi\big|_{K^0\times G}$ are precisely the Maharam measures.

What remains is to show that there is a bijection between the invariant Radon measures of 
$T_\phi\big|_{K^0\times G}$ and $T_\phi\big|_{K\times G}$. 
By Lemma \ref{lem:minmax}, the set $\K \times G \setminus (\K^0\times G)$ is countable, so if we show that there are no atomic invariant Radon 
measures supported on $\K \times G \setminus (\K^0\times G)$, then such a bijection is simply given by restriction and extension of the 
measures. By  Lemma \ref{lem:minmax}, $\K \times G \setminus (\K^0\times G)$ is in fact contained in the $T_\phi$ orbit of the set $\{0\}\times G$. 
This is a countable union of orbits, hence it suffices to show that there is no invariant Radon measure supported on the orbit of $(0,a)$ 
for any $a\in G$. \\

{
Indeed, for any $k\geq 1$, by Lemma \ref{lem:BS} we see that
 $S^T_{q_1^{(0,kN)}} \phi (0) = \phi^{(kN)}(0) = \phi(0)$, where the second 
equality holds by the periodic type assumption $\phi^{(N)} = \phi$.} This means that for the increasing sequence of times $t_k = q_1^{(0,kN)}$, 
for all $k$ and any $a\in G$, $T_\phi^{t_k}(0,a)$ is contained in $\K\times \{a+\phi(0)\}$.

Hence for any $a\in G$, the orbit of $(0,a)$ returns infinitely many times to $\K \times \{a+\phi(0)\}$, so any invariant Radon
measure $\mu$ must have $\mu(\{(0,a)\}) = 0$. 

Thus we deduce that there is a bijection between the invariant Radon measures of $\tau_\phi\big|_{\Sigma^0\times G}$ and of $T_\phi\big|_{\K\times G}$. 
This bijection preserves ergodicity, so the ergodic invariant Radon measures for $T_\phi$ are precisely the Maharam measures. \qedhere

\end{proof}

\section{Weak-* continuity of Maharam measures depending on the parameter}\label{sec:weak*}
In this section, still in the setting of periodic type skew-products,
we give explicit formulas for the Maharam measures of sets which appear as floors in Rokhlin towers for $T_\phi$, and hence deduce that 
the Maharam measures $\mu_\psi$ depend weak-* continuously on the parameter $\psi$. \\

\subsection{Recurrence relation for measures of {floors}}

\begin{notation}
    Since $f(p)$ only depends on the first edge of a path $p\in \Sigma$, we will write $f(e)$ for $e\in \mathcal{E}$ to denote $f(p)$ where $p$ is any path that starts with $e$.
\end{notation}

Recall that the Rauzy-Veech cocycle $A^{(r,k)}_{ij}$ counts the number of floors of the tower $Z^{(r,k)}_{j}$ that belong to $\K_i$. We define an 
analogous cocycle for skew-products, where in addition we keep track of the $G$-coordinate.

\begin{defn}
Let
\[b^{(k)}_{ij,a} := \#\{\text{Floors F in } \tilde{Z}^{(0,kN)}_{j,a}: F \subset \K_{i,0}\}.\]

By Lemma \ref{paths are floors} for finite IETs, the floors of $Z^{(0,kN)}_j$  which are contained in $\K_i$ correspond to paths 
\[p_k\in \Sigma_k:\  s(p_k) = i,\ t(p_k) = j.\]

Then for the skew-product, by Lemma \ref{lem:projection_sp} applied $k$ times, the floors of $\tilde{Z}^{(0,kN)}_{j,a}$  which are contained in $\K_{i,0}$ correspond to paths
\[p_k\in \Sigma_k:\  s(p_k) = i,\ t(p_k) = j,\ S^\sigma_k f (p_k) = a.\]

Hence we can also interpret $b^{(k)}_{ij,a}$ as a count of paths in the Bratteli diagram:
\begin{equation}
b^{(k)}_{ij,a} = \#\left\{p_k\in \Sigma_k:\ s(p_k) = i,\ t(p_k) = j,\ S^\sigma_k f (p_k) = a  \right\}.
\end{equation}
\end{defn}
\begin{defn}
We can consider the numbers $b^{(k)}_{ij,a}$ as the coefficients of a generating function, by defining the $d\times d$ matrix $M^{(k)}$ which has as
entries Laurent polynomials in the variables $t = (t_1,\dots t_m)$:

\[\left(M^{(k)}\right)_{ij}(t) := \sum_{a\in G} b^{(k)}_{ij,a} t^a,\]

where for $a = (a_1,\dots,a_m)\in \Z^m$, we denote by $t^a$ the product $t_1^{a_1} t_2^{a_2} \cdot \ldots \cdot t_m^{a_m}$.\\

We call $M$ the \textbf{level-counting cocycle} to emphasise that in addition to counting the floors of towers belonging to the {sets} 
$\K_i$ as done by the Rauzy-Veech cocycle $A$, $M$ also records the \emph{level} where the floors live, where by level we mean the projection of 
$\K \times G$ onto $G$.
\end{defn}

\begin{rem}
Note that $M$ is related to $A$ in the following way. As the towers $\tilde{Z}^{(0,kN)}_{j,a}$ cover the towers $Z^{(0,kN)}_j$, 
\[\sum_{a\in G} b^{(k)}_{ij,a} = A^{(0,kN)}_{ij}.\]
So if $\mathbf{1}$ is the vector $(1,1,\dots,1)\in \Z^m$, then 
\[M^{(k)}(\mathbf{1}) = A^{(0,kN)}.\]
\end{rem}

\begin{lemma}
If $M = \1M$, then

\[M_{ij}(t) = \sum_{\substack{e \in \mathcal{E}\\ s(e) = i,\ t(e) = j}} t^{f(e)}.\]
\end{lemma}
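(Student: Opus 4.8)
The plan is to treat this as the base case $k=1$ of the path-counting interpretation of $M^{(k)}$, where everything collapses because a path of length one is a single edge and a Birkhoff sum of length one is a single evaluation. So no induction or renormalisation dynamics is needed here — only unwinding the definitions.

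First I would recall that $M = M^{(1)}$ is assembled from the counts $b^{(1)}_{ij,a}$, which enumerate the paths $p_1 \in \Sigma_1$ with $s(p_1) = i$, $t(p_1) = j$ and $S^\sigma_1 f(p_1) = a$. A path in $\Sigma_1$ is by definition a single admissible edge from $V_0$ to $V_1$, i.e. an element $e \in \mathcal{E}_1$; in the periodic (stationary) setting this edge set is identified with the common edge set $\mathcal{E}$. For such a one-edge path the source and target are those of $e$, and the length-one Birkhoff sum reduces to a single term,
\[
S^\sigma_1 f(p_1) = f(p_1) = f(e),
\]
using the notation just introduced that $f(e) = f(p)$ for any $p$ beginning with $e$.

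Consequently $b^{(1)}_{ij,a} = \#\{e \in \mathcal{E} : s(e)=i,\ t(e)=j,\ f(e)=a\}$, a count that is nonzero for only finitely many $a$ since $\mathcal{E}$ is finite. Substituting this into the defining formula $M_{ij}(t) = \sum_{a\in G} b^{(1)}_{ij,a}\, t^a$ and regrouping the sum over $a\in G$ by grouping the edges according to their value $f(e)$ — so that each edge $e$ with $s(e)=i$, $t(e)=j$ contributes exactly once, in the term $a = f(e)$ — yields
\[
M_{ij}(t) = \sum_{\substack{e \in \mathcal{E}\\ s(e)=i,\ t(e)=j}} t^{f(e)},
\]
as claimed.

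There is no substantial obstacle in this lemma; the only points deserving a word of care are the identification of $\mathcal{E}_1$ with $\mathcal{E}$ in the stationary diagram and the observation that $S^\sigma_1 f$ is a single evaluation, so that the $G$-label of a one-edge path is exactly $f(e)$. The real content is conceptual rather than computational: at level one the level-counting cocycle is precisely the \emph{edge generating function} of $f$, which then serves as the base for the cocycle relation $M^{(k)} = (M)^k$ used to compute the higher $M^{(k)}$.
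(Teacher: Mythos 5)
Your proof is correct and follows exactly the paper's route: identify $\Sigma_1$ with $\mathcal{E}$, note that $S^\sigma_1 f(p_1) = f(e)$, and substitute the resulting count $b^{(1)}_{ij,a}$ into the generating-function definition of $M_{ij}$. The paper's own proof is just a one-line version of the same unwinding of definitions.
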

\begin{proof}
This follows from the definition,
\[b^{(1)}_{ij,a} = \#\left\{p_1\in \Sigma_1 = \mathcal{E}:\ s(p_1) = i,\ t(p_1) = j,\ f (p_1) = a  \right\}.\]
\end{proof}

\begin{lemma}\label{M is cocycle}
We have
\[M^{(k)} = M^k.\]
\end{lemma}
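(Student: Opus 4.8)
The plan is to prove the identity by induction on $k$, via the one-step recursion $M^{(k)} = M^{(k-1)} M$. The two structural facts driving the argument are the following. First, since $f(p)$ depends only on the first edge of $p$, and $\sigma^{r-1}$ shifts a path so that its first edge becomes $e_r$, the Birkhoff sum of $f$ along a finite path $p_k = (e_1,\dots,e_k) \in \Sigma_k$ is just the sum of the $f$-values of its edges:
\[
S^\sigma_k f(p_k) = \sum_{r=1}^{k} f(e_r).
\]
Second, the assignment $a \mapsto t^a$ is a homomorphism from $(G,+)$ to the multiplicative monoid of Laurent monomials, i.e. $t^{a+b} = t^a t^b$; this is exactly what converts the additive convolution of the integer counts $b^{(k)}_{ij,a}$ into ordinary multiplication of generating functions.

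Next I would set up the path decomposition. An admissible path $p_k = (e_1,\dots,e_k)$ with $s(p_k)=i$ and $t(p_k)=j$ is precisely the concatenation of an admissible path $p_{k-1}=(e_1,\dots,e_{k-1})$ with $s(p_{k-1})=i$ and $t(p_{k-1})=l$, together with a final edge $e_k \in \mathcal{E}$ satisfying $s(e_k)=l$ and $t(e_k)=j$, where $l$ ranges over $\{1,\dots,d\}$ (admissibility forces $t(e_{k-1}) = s(e_k)$). Under this bijection the Birkhoff sum splits as $S^\sigma_k f(p_k) = S^\sigma_{k-1} f(p_{k-1}) + f(e_k)$, so counting paths by their total $f$-value yields the convolution
\[
b^{(k)}_{ij,a} = \sum_{l=1}^{d} \sum_{b+c=a} b^{(k-1)}_{il,b}\, b^{(1)}_{lj,c}.
\]

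Multiplying by $t^a$, summing over $a\in G$, and using $t^{b+c}=t^b t^c$, the double sum factors as
\[
\big(M^{(k)}\big)_{ij}(t) = \sum_{l=1}^{d} \Big(\sum_{b} b^{(k-1)}_{il,b}\, t^b\Big)\Big(\sum_{c} b^{(1)}_{lj,c}\, t^c\Big) = \sum_{l=1}^{d} \big(M^{(k-1)}\big)_{il}(t)\, M_{lj}(t),
\]
which is exactly $(M^{(k-1)} M)_{ij}$. Since $M^{(1)} = M$ by the preceding lemma, induction gives $M^{(k)} = M^k$. Alternatively one can expand the matrix power directly: each monomial appearing in $(M^k)_{ij}$ is a product $M_{i l_1}\cdots M_{l_{k-1} j}$, and expanding each factor as a sum over edges reproduces precisely the sum over admissible length-$k$ paths from $i$ to $j$, weighted by $t^{\sum_r f(e_r)} = t^{S^\sigma_k f(p_k)}$, which regroups into $\sum_a b^{(k)}_{ij,a} t^a$. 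I do not expect a genuine obstacle; the only points needing care are the bookkeeping that the Birkhoff sum coincides with the edge-sum along the path (a consequence of $f$ being a function of the first edge alone) and the observation that passing to generating functions turns addition in $G$ into monomial multiplication, so that the cocycle relation for path-counts becomes an honest matrix product.
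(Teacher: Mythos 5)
Your proof is correct and is essentially the paper's argument: both rest on the bijection between admissible length-$k$ paths from $i$ to $j$ and the terms in the expansion of the matrix product, together with the fact that $t^{a+b}=t^at^b$ turns the additive path-weight $S^\sigma_k f(p_k)=\sum_r f(e_r)$ into multiplication of monomials. The paper runs the induction at the level of the Laurent polynomials (your ``alternative'' direct expansion of $M^k$ is literally the paper's proof), while you phrase it as a convolution identity on the coefficients $b^{(k)}_{ij,a}$; these are the same computation.
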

\begin{proof}
We show this for $k=2$:
\begin{align*}
\left(M^2\right)_{ij} &= \sum_{r=1}^d M_{ir}M_{rj} \\
 &= \sum_{r=1}^d \left[ \left(\sum_{\substack{e \in \mathcal{E}\\ s(e) = i,\ t(e) = r}} t^{f(e)}\right)
\left(\sum_{\substack{e \in \mathcal{E}\\ s(e) = r,\ t(e) = j}} t^{f(e)}\right)  \right] \\
 &= \sum_{\substack{p = (e_1,e_2) \in \Sigma_2\\ s(e_1) = i,\ t(e_2) = j}} t^{f(e_1)+f(e_2)}
\end{align*}

Similarly by induction we obtain for all $k$,

\[\left(M^k\right)_{ij} = \sum_{\substack{p_k \in \Sigma_k\\ s(p_k) = i,\ t(p_k) = j}} t^{S^\sigma_k f (p_k)}. \]

Hence the coefficient of $t^a$ in the polynomial $(M^k)_{ij}$ is
\[\#\left\{p_k\in \Sigma_k:\ s(p_k) = i,\ t(p_k) = j,\ S^\sigma_k f (p_k) = a  \right\} = b^{(k)}_{ij},\]
and thus $M^{(k)} = M^k.$
\end{proof}

Now consider that each {set} $\K_{i,0}$ is a union of floors of many different towers. 
Adding up the measures of each floor, we obtain an expression for 
the measure of $\K_{i,0}$.

\begin{prop}\label{invariant recurrence}
Suppose that $\mu$ is an invariant measure for $T_\phi$. Then for any $k \geq 1$, for any $i$,

\[\mu\left(\K_{i,0}\right) = \sum_{j=1}^d \sum_{a\in G} b^{(k)}_{ij,a}\ \mu\left(J^{(k)}_{j,a}\right).\]
\end{prop}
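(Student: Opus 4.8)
The plan is to read off both sides of the identity from the dynamical partition of $\K\times G$ into Rokhlin towers, and then to collapse the measures of all floors to the measure of their common base using $T_\phi$-invariance. Recall that for $r=0$ and level $kN$ one has the dynamical partition
\[\K\times G = \bigsqcup_{a\in G}\bigsqcup_{j=1}^d \tilde Z^{(0,kN)}_{j,a},\]
and that each tower $\tilde Z^{(0,kN)}_{j,a}$ is itself the disjoint union of its floors $(T_\phi)^l(J^{(k)}_{j,a})$ for $0\le l<q^{(0,kN)}_j$. Hence the family of all floors of all these towers is a partition of $\K\times G$ into clopen sets, and the whole computation reduces to identifying which floors make up $\K_{i,0}$.

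First I would check that every floor is contained in exactly one set $\K_{i,b}$. Projecting to the $\K$-coordinate, the floor $(T_\phi)^l(J^{(k)}_{j,a})$ maps to the floor $T^l(J^{(k)}_j)$ of the base tower $Z^{(0,kN)}_j$, which by the structure of Rauzy–Veech towers is an isometric image contained in a single $\K_i$ (this is exactly what makes the Rauzy–Veech cocycle $A^{(0,kN)}_{ij}$ well-defined). Moreover, since the orbit $x,Tx,\dots,T^{l-1}x$ stays inside the tower for $x\in J^{(k)}_j$, the Birkhoff sum $S^T_l\phi$ is constant on $J^{(k)}_j$, so the $G$-coordinate of the whole floor equals the single value $a+S^T_l\phi|_{J^{(k)}_j}$. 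Consequently $\K_{i,0}$ is precisely the disjoint union of those floors lying inside it:
\[\K_{i,0} = \bigsqcup_{j=1}^d\bigsqcup_{a\in G}\ \bigsqcup_{\substack{0\le l<q^{(0,kN)}_j\\ (T_\phi)^l(J^{(k)}_{j,a})\subset \K_{i,0}}} (T_\phi)^l(J^{(k)}_{j,a}),\]
and by definition the innermost union has exactly $b^{(k)}_{ij,a}$ terms.

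Next I would apply invariance: because $T_\phi$ is a homeomorphism and $\mu$ is $T_\phi$-invariant, $\mu\big((T_\phi)^l(J^{(k)}_{j,a})\big)=\mu\big(J^{(k)}_{j,a}\big)$ for every floor. Taking $\mu$ of the displayed disjoint union and using additivity then gives
\[\mu(\K_{i,0}) = \sum_{j=1}^d\sum_{a\in G} b^{(k)}_{ij,a}\,\mu\big(J^{(k)}_{j,a}\big),\]
which is the claim. The main point requiring care — and the step I would verify most carefully — is the exactness of the decomposition of $\K_{i,0}$, namely that each floor sits inside a \emph{unique} $\K_{i,b}$ (making the union both disjoint and exhaustive) and that the double sum is genuinely finite: for fixed $i,j$ only finitely many $a$ contribute, since $\sum_{a\in G} b^{(k)}_{ij,a}=A^{(0,kN)}_{ij}<\infty$. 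Thus $\K_{i,0}$ is in fact a finite disjoint union of floors and no interchange of infinite sums is needed.
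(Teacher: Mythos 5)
Your proposal is correct and follows essentially the same route as the paper: decompose $\K_{i,0}$ into floors of the towers $\widetilde{Z}^{(0,kN)}_{j,a}$, use $T_\phi$-invariance to equate each floor's measure with that of its base, and sum with multiplicity $b^{(k)}_{ij,a}$. The extra checks you flag (each floor lies in a unique $\K_{i,b}$, and only finitely many $a$ contribute since $\sum_{a\in G} b^{(k)}_{ij,a}=A^{(0,kN)}_{ij}<\infty$) are left implicit in the paper but are worthwhile to record.
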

\begin{proof}
Since $\mu$ is invariant, the measure of any floor of the tower $\widetilde{Z}^{(0,kN)}_{j,a}$ is the same as the measure of the base $J^{(k)}_{j,a}$. 
Adding up over all the towers and counting with multiplicity, which is given by $b^{(k)}_{ij,a}$ by definition, we obtain the result. 
\end{proof}

\subsection{Applying the recurrence to Maharam measures}
Consider now the Maharam measure $\mu_\psi$ for some homomorphism $\psi:G \to \R$. We normalise the measure such that $\mu_\psi(\K\times\{0\}) = 1$.

Now we use the defining property of Maharam measures, which is that for any measurable set $U \subset \K$, 
$\mu_\psi (U \times \{a\}) = e^{\psi(a)} \mu_\psi(U\times\{0\})$.

For the standard basis $\{u_1,\dots u_m\}$ of $G$, let $\lambda_i = e^{\psi(u_i)}$, 
and $\lambda = (\lambda_1,\dots,\lambda_k)$. Then $e^{\psi(a)} = \lambda^a$ for all $a\in G$.

\begin{lemma}
Let $M(\lambda)$ be the matrix obtained by evaluating the entries of $M$ at $t=\lambda$. Let $w^{(k)}$ be the vector with entries 
$w^{(k)}_i = \mu_\psi\left(J^{(k)}_{i,0}\right)$. Then

\[w^{(0)} = M(\lambda)^k w^{(k)}.\]
\end{lemma}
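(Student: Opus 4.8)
The plan is to combine the invariance recurrence of Proposition \ref{invariant recurrence} with the defining scaling property of the Maharam measure $\mu_\psi$, and then to recognise the resulting coefficient sum as an entry of the level-counting cocycle evaluated at $t = \lambda$. First I would record the scaling property: by construction $\mu_\psi(U \times \{a\}) = e^{\psi(a)} \mu_\psi(U \times \{0\}) = \lambda^a \mu_\psi(U \times \{0\})$ for any measurable $U \subset \K$, so in particular
\[\mu_\psi\left(J^{(k)}_{j,a}\right) = \lambda^a\, \mu_\psi\left(J^{(k)}_{j,0}\right) = \lambda^a w^{(k)}_j.\]

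Noting that $J^{(0)} = \K^{(0)} = \K$, so that $w^{(0)}_i = \mu_\psi(\K_{i,0})$, I would then apply Proposition \ref{invariant recurrence} to the invariant measure $\mu = \mu_\psi$ and substitute the displayed identity:
\[w^{(0)}_i = \mu_\psi\left(\K_{i,0}\right) = \sum_{j=1}^d \sum_{a\in G} b^{(k)}_{ij,a}\, \mu_\psi\left(J^{(k)}_{j,a}\right) = \sum_{j=1}^d \left(\sum_{a\in G} b^{(k)}_{ij,a}\, \lambda^a\right) w^{(k)}_j.\]
The inner bracket is exactly the generating function $\left(M^{(k)}\right)_{ij}(t)$ evaluated at $t = \lambda$.

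By Lemma \ref{M is cocycle} we have $M^{(k)} = M^k$ as matrices of Laurent polynomials, and since specialisation $t \mapsto \lambda$ is a ring homomorphism from the Laurent polynomial ring to $\R$, it commutes with the sums and products defining a matrix power; hence $\left(M^{(k)}\right)(\lambda) = \left(M^k\right)(\lambda) = \left(M(\lambda)\right)^k$. Substituting this gives $w^{(0)}_i = \sum_{j=1}^d \left(M(\lambda)^k\right)_{ij}\, w^{(k)}_j$, which is precisely the claimed identity $w^{(0)} = M(\lambda)^k w^{(k)}$.

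The computation is short once the pieces are assembled; the only point I would state explicitly rather than skip is that passing from the \emph{polynomial} cocycle relation $M^{(k)} = M^k$ to the \emph{numerical} relation $\left(M(\lambda)\right)^k$ relies on evaluation at $t = \lambda$ being a homomorphism and therefore respecting the matrix products hidden inside $M^k$. No genuine obstacle arises here, since both ingredients needed — invariance of $\mu_\psi$ (to invoke Proposition \ref{invariant recurrence}) and its Maharam scaling — are available directly from the hypotheses.
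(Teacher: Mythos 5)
Your proposal is correct and follows exactly the paper's own argument: combining Proposition \ref{invariant recurrence} with the Maharam scaling property and Lemma \ref{M is cocycle} to identify the coefficient sum as an entry of $M(\lambda)^k$. The only difference is that you spell out the (harmless) point that evaluation at $t=\lambda$ commutes with the matrix product, which the paper leaves implicit.
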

\begin{proof}
From Proposition \ref{invariant recurrence}, Lemma \ref{M is cocycle} and using the scaling property of $\mu_\psi$, we see that

\[w^{(0)}_i = \sum_{j=1}^d \sum_{a\in G} b^{(k)}_{ij,a} \lambda^a\ \mu_\psi\left(J^{(k)}_{j,0}\right) 
= \sum_{j=1}^d \left( M(\lambda)^k\right)_{ij} w^{(k)}_j = \left(M(\lambda)^k w^{(k)}\right)_i.\]

\end{proof}

Now, by the positivity of $A$, $M(\lambda)$ is positive for any $\lambda > 0$, so we can apply the Perron-Frobenius theorem to it. 

Denote by $v_{\lambda}$ the Perron-Frobenius eigenvector of $M(\lambda)$, normalised such that $\sum_{i=1}^d (v_\lambda)_i = 1$,
 and by $r_\lambda$ the corresponding eigenvalue.
 
\begin{thm}
$w^{(k)} = r_\lambda^{-k} v_\lambda$ and hence for any $k$, if $p_k\in \Sigma_k$, then 
\[\scalebox{1.2}{$\displaystyle \mu_\psi\left(J(p_k)\times \{a\}\right) = \frac{\lambda^{a+S^\sigma_k f (p_k)}\, (v_\lambda)_{t(p_k)}}{r_\lambda^k}.$}\]
\end{thm}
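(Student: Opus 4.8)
The plan is to determine the vectors $w^{(k)}$ first, using the self-similarity forced by the periodic type assumption, and then to read off the measure of a single floor from invariance together with the scaling property of $\mu_\psi$. The key identity to exploit is the one from the preceding lemma, $w^{(0)} = M(\lambda)^k w^{(k)}$, valid for every $k$; on its own this does not pin down $w^{(k)}$, so the first task will be to produce a second, independent relation between consecutive levels.

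To get that relation I would argue self-similarity. Since $\phi^{(N)} = \phi$, the first-return map of $T_\phi$ to $J^{(1)}\times G$ is the skew-product $(T^{(N)})_\phi$, and because $T^{(N)} = \mathcal{S}^{-1}\circ T\circ\mathcal{S}$ while $\mathcal{S}\times\id$ fixes the $G$-coordinate, the conjugacy $\mathcal{S}\times\id$ identifies this return map with $T_\phi$ itself. The restriction $\mu_\psi|_{J^{(1)}\times G}$ is invariant for the return map, and its pushforward under $\mathcal{S}\times\id$ is an invariant Radon measure for $T_\phi$ that still satisfies $\mu(U\times\{a\}) = e^{\psi(a)}\mu(U\times\{0\})$, hence is again a Maharam measure for the same $\psi$. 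By the uniqueness in part (I) of Theorem \ref{thm:main} it must equal $c\,\mu_\psi$ for some $c>0$; evaluating on $\K_i\times\{0\}$ gives $w^{(1)}_i = c\,w^{(0)}_i$, and iterating the same argument on the successive return maps on $J^{(k)}\times G$ yields $w^{(k)} = c^k w^{(0)}$ for all $k$. Substituting this into $w^{(0)} = M(\lambda)^k w^{(k)}$ and taking $k=1$ produces the eigenvector equation $M(\lambda)\,w^{(0)} = c^{-1}w^{(0)}$. Because $w^{(0)}$ is nonzero (its entries sum to $\mu_\psi(\K\times\{0\}) = 1$) and nonnegative, positivity of $M(\lambda)$ makes $M(\lambda)w^{(0)}$ strictly positive, so $w^{(0)}>0$; as a positive matrix has a unique positive eigendirection, $w^{(0)}$ must be a multiple of $v_\lambda$ with $c^{-1} = r_\lambda$, and the matching normalisations $\sum_i w^{(0)}_i = 1 = \sum_i (v_\lambda)_i$ force $w^{(0)} = v_\lambda$. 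Hence $w^{(k)} = r_\lambda^{-k} v_\lambda$.

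For the floor formula I would use Lemma \ref{paths are floors} to see that $J(p_k)$ is a floor of the tower $Z^{(0,kN)}_{t(p_k)}$, and then unwind Lemma \ref{lem:projection_sp} applied $k$ times to identify $J(p_k)\times\{a\}$ as exactly one floor of the skew-product tower $\widetilde{Z}^{(0,kN)}_{t(p_k),\,b}$ whose base sits at level $b = a + S^\sigma_k f(p_k)$; the point is that $S^\sigma_k f$ is constant on the cylinder determined by $p_k$, so $b$ does not depend on the chosen point of $J(p_k)$. Since $\mu_\psi$ is $T_\phi$-invariant, this floor has the same measure as its base $J^{(k)}_{t(p_k),\,b}$, and applying the scaling property followed by the value of $w^{(k)}$ just computed gives
\[
\mu_\psi\big(J(p_k)\times\{a\}\big) = \mu_\psi\big(J^{(k)}_{t(p_k),\,b}\big) = \lambda^{b}\,w^{(k)}_{t(p_k)} = \frac{\lambda^{a+S^\sigma_k f(p_k)}\,(v_\lambda)_{t(p_k)}}{r_\lambda^{k}},
\]
which is the claimed expression.

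The main obstacle is the self-similarity step: everything downstream is routine once $w^{(k)} = c^k w^{(0)}$ is known, but establishing this cleanly requires care in checking that the restricted-and-pushed-forward measure is genuinely Radon, genuinely a Maharam measure for the \emph{same} $\psi$ (this is where it matters that $\mathcal{S}\times\id$ acts trivially on $G$), and in invoking uniqueness correctly so that the proportionality constant $c$ is the same at every level. One should also confirm the sign and level-shift bookkeeping in Step 3, i.e. that the base level is $a + S^\sigma_k f(p_k)$ rather than $a - S^\sigma_k f(p_k)$, since $f$ is defined with a minus sign and $\widetilde R$ descends by applying $T_\phi^{-1}$.
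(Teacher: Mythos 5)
Your argument is correct, but the way you pin down $w^{(0)}$ is genuinely different from the paper's. The paper never produces an eigenvector equation: it observes that the already-established relation $w^{(0)} = M(\lambda)^k w^{(k)}$, together with $w^{(k)} \geq 0$, places $w^{(0)}$ in the cone $M(\lambda)^k\big(\R_+^d\big)$ for every $k$, and the Perron--Frobenius theorem for the positive matrix $M(\lambda)$ says these nested cones intersect exactly in the ray $\R_+ v_\lambda$; the chosen normalisation then gives $w^{(0)} = v_\lambda$, and $w^{(k)} = r_\lambda^{-k} v_\lambda$ follows. You instead derive the honest eigenvector equation $M(\lambda)\,w^{(0)} = c^{-1} w^{(0)}$ from renormalisation self-similarity plus the uniqueness clause of part (I) of Theorem 1. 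Both routes are valid; the paper's is shorter and logically lighter, using only the recurrence and linear algebra, whereas yours leans on the measure-classification uniqueness (available at this point in the paper, so there is no circularity) and requires the extra bookkeeping you correctly flag --- that the first-return map to $\1J\times G$ is conjugate to $T_\phi$ by $\mathcal{S}\times\id$ precisely because $\phi^{(N)}=\phi$, that the pushed-forward restriction is Radon, satisfies the same $e^{\psi(a)}$ scaling and hence corresponds to a quasi-invariant measure with the same Radon--Nikodym cocycle, and that $c>0$. In exchange, your argument explains \emph{why} $w^{(0)}$ is an eigenvector, and the self-similarity $w^{(k)} = c^k w^{(0)}$ is of independent interest. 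The second half of your proof --- identifying $J(p_k)\times\{a\}$ as a floor of $\widetilde{Z}^{(0,kN)}_{t(p_k),\,a+S^\sigma_k f(p_k)}$ and applying invariance followed by the scaling property --- is exactly the paper's.
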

\begin{proof}
We have by Perron-Frobenius theorem that $\bigcap_{k\geq 1} M(\lambda)^k \big(\R_+\big) = \R_+ v_\lambda$. 

Hence $w^{(0)}$ is a scalar multiple of $v_\lambda$, and since we chose the normalisations to be consistent, $w^{(0)} = v_\lambda$. Then also 
$w^{(k)} = M(\lambda)^{-k} w^{(0)} = r_\lambda^{-k} v_\lambda$.

Now $J(p_k)\times\{a\}$ is in the tower $\widetilde{Z}^{(0,kN)}_{t(p_k),\,a+S^\sigma_k f(p_k)}$, and hence 
\[\scalebox{1.2}{$\displaystyle \mu_\psi\left(J(p_k)\times \{a\}\right) = \lambda^{a+S^\sigma_k f (p_k)}\, w^{(k)}_{t(p_k)} = \frac{\lambda^{a+S^\sigma_k f (p_k)}\, (v_\lambda)_{t(p_k)}}{r_\lambda^k}.$}\]
\end{proof}

We are grateful to the referee for pointing out the close connection to McMullen's work in \cite{McMullen}, where he considers a 
similar question when studying twisted measured laminations preserved by a pseudo-Anosov. 
The connection to our setting comes from the fact that one can interpret Maharam measures arising from homomorphisms $\psi:G \to \R$ as 
corresponding twisted measured laminations.\footnote{A Maharam measure $\mu_\psi$ for a skew-product $T_\phi$ can be interpreted
as a  twisted measured lamination as follows: after suspending the IET $T$ as 
the first return map to a Poincaré section $I$ of a straight-line flow 
on a translation surface $S$, define for $x \in I$ the homology class $\xi(x)\in H_1(S,\Z)$ as the cycle given by the first return of the flow to 
$I$, with the endpoints of the flow arc joined along $I$. Then 
for some $\alpha\in H^1(S,G)$, $\phi$ can be seen as $\phi(x) = \alpha(\xi(x))$. For a homomorphism $\psi:G \to \R$, a quasi-invariant measure 
$\nu_\psi$ corresponds to a twisted measured lamination in $\mathcal{ML}_s$ for $s=-\psi\circ \alpha \in H^1(S,\R)$. (Here  $\mathcal{ML}_s$ 
are measured laminations twisted by the cohomology class $s$, in the sense that for a $\mu \in  \mathcal{ML}_s$, for any $\gamma\in \pi_1(S)$, 
$\gamma_*\mu = e^{s(\gamma)}\mu$ -- see \cite{McMullen} for details.)}

In \cite[Thm 8.3]{McMullen} McMullen shows that one can construct twisted measured laminations from the 
Perron-Frobenius eigenvectors of some matrices of Laurent polynomials, which correspond in the language of train tracks to the level-counting 
cocycle that we defined using Rauzy-Veech induction. While McMullen's setting only applies to laminations preserved by a 
pseudo-Anosov (corresponding to periodic type IETs), the level-counting cocycle as defined here can be defined analogously for any skew-product.

\def\thethmglobal{2}
\begin{thmglobal}
The measures $\mu_\psi$ depend weak-* continuously on $\psi \in \Hom(G,\R)$, where we put the standard topology on $\Hom(G,\R) \cong \R^m$.
\end{thmglobal}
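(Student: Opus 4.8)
The plan is to work in the coordinate $\lambda = (\lambda_1,\dots,\lambda_m) \in (0,\infty)^m$ with $\lambda_i = e^{\psi(u_i)}$. Since $\psi \mapsto \lambda$ is a homeomorphism from $\Hom(G,\R) \cong \R^m$ onto $(0,\infty)^m$ (its inverse being $\psi(u_i) = \log\lambda_i$), it suffices to prove that $\mu_\psi$ depends weak-* continuously on $\lambda$. The first step is to show that the masses of the basic clopen sets $J(p_k)\times\{a\}$ vary continuously with $\lambda$. The entries $M_{ij}(\lambda) = \sum_{e:\, s(e)=i,\, t(e)=j} \lambda^{f(e)}$ are finite sums of Laurent monomials, hence real-analytic and strictly positive on $(0,\infty)^m$; by positivity of $A = M(\mathbf 1)$ the matrix $M(\lambda)$ is a positive matrix for every $\lambda > 0$. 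The Perron-Frobenius eigenvalue $r_\lambda$ is then a simple root of the characteristic polynomial, so by standard perturbation theory both $r_\lambda$ and the normalised eigenvector $v_\lambda$ depend continuously on $\lambda$. Substituting into the explicit formula
\[\mu_\psi\!\left(J(p_k)\times\{a\}\right) = \frac{\lambda^{a+S^\sigma_k f(p_k)}\,(v_\lambda)_{t(p_k)}}{r_\lambda^k}\]
shows that for each fixed $k$, each $p_k \in \Sigma_k$ and each $a \in G$, the quantity $\mu_\psi(J(p_k)\times\{a\})$ is continuous in $\lambda$, and hence in $\psi$.

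Next I would reduce weak-* convergence to these cylinder sets. Let $\psi_n \to \psi_0$ with corresponding $\lambda^{(n)}\to\lambda^{(0)}$, and take a compactly supported continuous function $h$ on $K \times G$. Since $G$ is discrete and $K$ compact, the support of $h$ meets only finitely many slices $K\times\{a\}$, say for $a$ in a finite set $F \subset G$, so that $\int h\,\ud\mu_{\psi_n} = \sum_{a\in F}\int_K h(\cdot,a)\,\ud\mu^a_{\psi_n}$, where $\mu^a_\psi$ is the finite measure $U \mapsto \mu_\psi(U\times\{a\})$ on $K$ of total mass $e^{\psi(a)}$. It therefore suffices to show, for each fixed $a$, that $\int_K h(\cdot,a)\,\ud\mu^a_{\psi_n} \to \int_K h(\cdot,a)\,\ud\mu^a_{\psi_0}$.

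Here I would exploit that for each $k$ the floors $\{J(p_k): p_k \in \Sigma_k\}$ form a finite clopen partition of $K$, that these partitions are nested and refine to points (since $\pi(p)=\bigcap_k J(p_k)$ is a single point), and hence that their mesh tends to $0$. Given $\epsilon>0$, uniform continuity of $h(\cdot,a)$ lets me choose $k$ so large that $h(\cdot,a)$ varies by less than $\epsilon$ on each $J(p_k)$; replacing $h(\cdot,a)$ by the step function $h_k = \sum_{p_k} h(x_{p_k},a)\,\mathbbm{1}_{J(p_k)}$ (for chosen points $x_{p_k}\in J(p_k)$) incurs an error bounded by $\epsilon\,\mu^a_{\psi_n}(K) = \epsilon\, e^{\psi_n(a)}$, which is uniformly bounded for large $n$ since $e^{\psi_n(a)} \to e^{\psi_0(a)}$. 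As $h_k$ is a finite linear combination of indicators of the sets $J(p_k)$, its integral against $\mu^a_{\psi_n}$ converges to its integral against $\mu^a_{\psi_0}$ by the cylinder-set continuity established above. A standard $3\epsilon$ argument then yields convergence of $\int_K h(\cdot,a)\,\ud\mu^a_{\psi_n}$, and summing over the finite set $F$ completes the proof.

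I expect the continuity of $r_\lambda$ and $v_\lambda$ to be routine, since it is a purely finite-dimensional Perron-Frobenius statement, and the explicit formula from the previous theorem to carry the essential content. The one point needing genuine care is the uniform approximation step: verifying that the tower partitions have mesh tending to zero and that the total masses $e^{\psi_n(a)}$ are uniformly bounded, so that the tail error introduced by passing to step functions is controlled uniformly in $n$.
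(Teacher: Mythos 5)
Your proposal is correct and follows essentially the same route as the paper: continuity of $M(\lambda)$ in $\lambda$, simplicity of the Perron--Frobenius eigenvalue giving continuity of $r_\lambda$ and $v_\lambda$, the explicit formula for $\mu_\psi(J(p_k)\times\{a\})$, and then passage to weak-* convergence via the refining clopen partitions. The only difference is that you spell out the final approximation step (finitely many slices $K\times\{a\}$, mesh of the tower partitions tending to zero, the $3\epsilon$ argument with uniformly bounded total masses), which the paper leaves implicit.
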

\begin{proof}
$M(\lambda)$ depends continuously on $\psi$, hence so does $r_\lambda$, and as the Perron-Frobenius eigenvalue is always simple, 
the eigenvector $v_\lambda$ also depends continuously on $\lambda$. Thus for any $p_k\in \Sigma_k$ and $a\in G$,
$\mu_\psi(J(p_k)\times\{a\})$ depends continuously on $\psi$. Increasing $k$, we get arbitrarily fine partitions of $\K\times G$, and hence we deduce that 
for any Borel $U \subset \K\times G$, $\mu_\psi(U)$ is continuous with respect to $\psi$.
\end{proof}

\appendix
\section{Proof of $\mathcal{O}(\tau_{\phi_f}) = \mathfrak{T}(\sigma_f)$}\label{app:orb=tail}
In this appendix we prove that given a map $f:\Sigma \to G$, its tail cocycle $\phi_f$ (see Definition \ref{defn:phi_f}) satisfies the equivalence 
of relations  $\mathcal{O}(\tau_{\phi_f}) = \mathfrak{T}(\sigma_f).$ In fact this is the entire reason for the definition of the tail cocycle.
This proof is an extended version of the discussion at the beginning of \S2 in \cite{ANSS}.
\begin{prop}
Let $\tau:\Sigma^0 \to \Sigma^0$ be an adic map and $\sigma:\Sigma^0\to\Sigma^0$ be the left shift. Let $G$ be a group and $f:\Sigma^0 \to G$ be any 
map. Let $\phi_f:\Sigma^0 \to G$ be as defined in Definition \ref{defn:phi_f}. Then the skew-products $\sigma_f$ and $\tau_{\phi_f}$ satisfy:
\[\mathcal{O}(\tau_{\phi_f}) = \mathfrak{T}(\sigma_f).\]
\end{prop}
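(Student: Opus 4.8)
The plan is to reduce both inclusions to a single \emph{cocycle identity} relating the Birkhoff sums of $f$ under $\sigma$ to those of $\phi_f$ under $\tau$, and then feed in the already-understood base-level equality $\mathcal{O}(\tau) = \mathfrak{T}(\sigma)$ on $\Sigma^0$. First I would record that for $p \in \Sigma^0\setminus\Sigma_{\max}$ the defining sum for $\phi_f(p)$ is finite: choosing $N$ so large that $\sigma^i p = \sigma^i \tau p$ for all $i \geq N$ (possible since $p$ and $\tau p$ share a tail, by definition of the adic map), every term with index $\geq N$ vanishes, giving
\[
\phi_f(p) = S^\sigma_N f(p) - S^\sigma_N f(\tau p),
\]
and the same equality persists for every $k \geq N$ because the extra summands cancel in pairs. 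Iterating this and using that $p, \tau p, \dots, \tau^n p$ all lie in one tail class, an induction on $n$ upgrades it to
\[
S^\tau_n \phi_f(p) = S^\sigma_k f(p) - S^\sigma_k f(\tau^n p)
\]
for every $n \geq 0$ and every $k$ large enough that $\sigma^k p = \sigma^k \tau^n p$.

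For the inclusion $\mathcal{O}(\tau_{\phi_f}) \subseteq \mathfrak{T}(\sigma_f)$ I would take $(p',a') = \tau_{\phi_f}^n(p,a)$ with $n \geq 0$ (the case $n<0$ is symmetric after swapping the two points), so that $p' = \tau^n p$ and $a' = a + S^\tau_n \phi_f(p)$. Picking $k$ with $\sigma^k p = \sigma^k p'$ and computing
\[
\sigma_f^k(p,a) = \big(\sigma^k p,\ a + S^\sigma_k f(p)\big), \qquad \sigma_f^k(p',a') = \big(\sigma^k p',\ a' + S^\sigma_k f(p')\big),
\]
the first coordinates agree, while the cocycle identity yields $a' + S^\sigma_k f(p') = a + S^\tau_n \phi_f(p) + S^\sigma_k f(\tau^n p) = a + S^\sigma_k f(p)$, so the two iterates coincide.

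For the reverse inclusion I would start from $\sigma_f^k(p,a) = \sigma_f^k(p',a')$. The first coordinate gives $\sigma^k p = \sigma^k p'$, hence $(p,p') \in \mathfrak{T}(\sigma) = \mathcal{O}(\tau)$ and $p' = \tau^n p$ for some $n \in \Z$, which I may take nonnegative; the $G$-coordinate gives $a' - a = S^\sigma_k f(p) - S^\sigma_k f(p')$. The step I expect to require the most care is matching this difference with $S^\tau_n \phi_f(p)$: since $\sigma^j p = \sigma^j p'$ for all $j \geq k$, the quantity $S^\sigma_j f(p) - S^\sigma_j f(p')$ is independent of $j \geq k$, so it equals the value prescribed by the cocycle identity, namely $S^\tau_n \phi_f(p)$. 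Thus $a' = a + S^\tau_n \phi_f(p)$, which is exactly $\tau_{\phi_f}^n(p,a) = (p',a')$, placing the pair in $\mathcal{O}(\tau_{\phi_f})$. The only genuinely external ingredient is the standard characterisation $\mathcal{O}(\tau) = \mathfrak{T}(\sigma)$ of the adic orbit relation as tail equivalence on $\Sigma^0$; everything else is the bookkeeping of the two Birkhoff cocycles through the identity above.
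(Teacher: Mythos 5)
Your proposal is correct and follows essentially the same route as the paper's Appendix~\ref{app:orb=tail}: the central identity $S^\tau_n \phi_f(p) = S^\sigma_k f(p) - S^\sigma_k f(\tau^n p)$ is exactly the paper's telescoping computation (\ref{eqn:BS}), and both inclusions are then derived from it in the same way, using $\mathcal{O}(\tau) = \mathfrak{T}(\sigma)$ for the reverse direction. Your extra remark that $S^\sigma_j f(p) - S^\sigma_j f(p')$ is independent of $j \geq k$ is a slightly more careful phrasing of a step the paper handles implicitly, but it is not a different argument.
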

\begin{proof}
Since $\tau$ is the adic map, we know that $\mathcal{O}(\tau) = \mathfrak{T}(\sigma)$. 

\bigskip
First let us show that $\mathcal{O}(\tau_{\phi_f}) \subset \mathfrak{T}(\sigma_f)$.\\
Suppose $(p',a') = \tau_{\phi_f}^n(p,a)$ for some $(p,a) \in \Sigma^0\times G$. 
Then $\tau^n p = p'$, so there exists some $N$ such that $\sigma^N(p) = \sigma^N(p')$.

From the definition of $\phi_f$,
\begin{equation}\label{eqn:BS}
\begin{split}
S^\tau_n \phi_f (p) &= \sum_{i=0}^{n-1} \left[ \sum_{k=0}^{N-1} \left(f(\sigma^k \tau^i p) - f(\sigma^k \tau^{i+1} p)\right)  \right]\\
&= \sum_{k=0}^{N-1} \left(f(\sigma^k p) - f(\sigma^k \tau^n p)\right)\\
&= S^\sigma_N f(p) - S^\sigma_N f (p').
\end{split}
\end{equation}
Hence
\[a' = a + S^\sigma_N f(p) - S^\sigma_N f (p'),\]
and we see that
\[\sigma_f^N(p,a) = \big(\sigma^N p, a +  S^\sigma_N f(p)\big) = \big(\sigma^N p', a' +  S^\sigma_N f(p')\big) = \sigma_f^N(p',a').\]

\bigskip
Now let us show the converse inclusion, $\mathcal{O}(\tau_{\phi_f}) \supset \mathfrak{T}(\sigma_f)$.\\
Suppose that $(p,a),\, (p',a') \in \Sigma^0\times G$ are such that for some $N>0$, $\sigma_f^N(p,a) = \sigma_f^N(p',a')$. 

Since $\sigma^N p = \sigma^N p'$, there exists some $n\in \Z$ such that $p' = \tau^n p$. WLOG $n>0$ (else change the roles of $p,p'$.) 

From (\ref{eqn:BS}), $S^\tau_n \phi_f (p)= S^\sigma_N f(p) - S^\sigma_N f (p')$. 
Hence 
\[a' = a + S^\sigma_N f(p) - S^\sigma_N f (p') = a+S^\tau_n \phi_f (p),\]
and so 
\[\tau_{\phi_f}^n(p,a) = (p',a').\]
\end{proof}

\section{Proof of $\phi_f = \phi$}\label{app:phi=phi_f}
Here we prove that in the case of a periodic type skew-product $T_\phi$ coded by the skew-product $\tau_\phi$ over the adic map, if $f$ is as defined 
in Definition \ref{defn:f} and $\phi_f$ is the corresponding tail cocycle, then in fact $\phi_f = \phi$.

\begin{defn}
Say a $p = ((j_1,l_1),\dots)\in \Sigma$ is \textbf{top-floor} if $l_1 = q_{j_1}-1$ (i.e the edge $(j_1,l_1)$ is maximal).
This is equivalent to $\pi(p)$ being on the top floor of the tower $\1Z_{j_1}$, hence the terminology. Similarly call $p$ \textbf{bottom-floor} if 
$l_1=0$.
\end{defn}

\begin{lemma}\label{lem:top-floor}
If $p$ is top-floor, then $\sigma\tau p = \tau\sigma p$.
\end{lemma}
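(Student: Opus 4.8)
The plan is to compare the two infinite paths $\sigma\tau p$ and $\tau\sigma p$ coordinate by coordinate, using the explicit description of $\tau$ recalled just after the proposition on well-definedness of the adic map. Write $p=(e_1,e_2,\dots)$ with $e_k=(j_k,l_k)$. Since $p$ is top-floor the edge $e_1$ is maximal, so — assuming $p\notin\Sigma_{max}$, which is exactly what makes $\tau p$ defined — the first non-maximal index $n=\min\{k:e_k\text{ is not maximal}\}$ satisfies $n\ge 2$. I would first record that $\sigma p\notin\Sigma_{max}$ as well, since $e_n$ is a non-maximal edge of $\sigma p$, so that $\tau\sigma p$ is also defined.

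First I would write down $\sigma\tau p$. By the explicit formula, $\tau p=(f_1,\dots,f_n,e_{n+1},e_{n+2},\dots)$, where $f_n$ is the minimal edge greater than $e_n$ and, for $k<n$, $f_k=\min\{e\in\mathcal{E}:t(e)=s(f_{k+1})\}$. Applying $\sigma$ just drops $f_1$, giving
\[\sigma\tau p=(f_2,\dots,f_n,e_{n+1},e_{n+2},\dots),\]
so that $f_n$ now occupies position $n-1$. Next I would compute $\tau\sigma p$. The key observation, which relies on stationarity (this is what makes $\sigma$ a genuine self-map of $\Sigma$ and forces the edge sets, source/target maps and orders at all levels to coincide), is that in $\sigma p=(e_2,e_3,\dots)$ the edges $e_2,\dots,e_{n-1}$ are maximal while $e_n$ is not, so the first non-maximal index of $\sigma p$ is $n-1$. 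The formula then yields $\tau\sigma p=(g_1,\dots,g_{n-1},e_{n+1},e_{n+2},\dots)$, where $g_{n-1}$ is the minimal edge greater than $e_n$ and $g_k=\min\{e\in\mathcal{E}:t(e)=s(g_{k+1})\}$ for $k<n-1$.

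The heart of the argument is then a short downward induction proving $g_k=f_{k+1}$ for $1\le k\le n-1$. The base case $g_{n-1}=f_n$ is immediate, both being by definition the minimal edge greater than $e_n$. For the inductive step, if $g_{k+1}=f_{k+2}$ then $s(g_{k+1})=s(f_{k+2})$, and since $g_k$ and $f_{k+1}$ are defined by the identical rule $\min\{e:t(e)=s(\,\cdot\,)\}$ applied to this common source, we conclude $g_k=f_{k+1}$. Combining this with the fact that the unchanged tails $e_{n+1},e_{n+2},\dots$ agree on both sides shows that $\sigma\tau p$ and $\tau\sigma p$ coincide in every coordinate, which is the claim.

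I expect the only real obstacle to be bookkeeping rather than mathematical depth: one must track carefully the index shift introduced by $\sigma$ (so that position $k$ of $\tau\sigma p$ is to be matched with position $k+1$ of $\tau p$), and one must check that the recursive ``carry'' rules defining the $f_k$ and the $g_k$ are literally the same formula — which is precisely what the stationarity of the periodic type diagram guarantees.
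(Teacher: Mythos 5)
Your proof is correct and follows essentially the same route as the paper's: both compute $\sigma\tau p$ and $\tau\sigma p$ from the explicit description of the adic map, observe that $e_n$ is also the first non-maximal edge of $\sigma p$ (shifted to position $n-1$), and note that the recursive rule producing the carried minimal edges is identical on both sides. The paper simply asserts the coincidence of the two resulting paths using the concrete formula $f_k=(s(f_{k+1}),0)$, where you spell it out as a downward induction; the content is the same.
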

\begin{proof}
Suppose $p = (e_1,e_2,\dots)$, with $e_i=(j_i,l_i)$, is top-floor. Let $n\geq 2$ be minimal such that $e_n$ is not maximal. 
Then $\tau p = (f_1,\dots,f_{n-1},f_n,e_{n+1},\dots)$, where $f_n = (j_n,l_n+1)$ and $f_k = (s(f_{k+1}),0)$ for $k<n$. Hence 
\[\sigma \tau p = (f_2,\dots,f_{n-1},f_n,e_{n+1},\dots).\]
On the other hand, $e_n$ is also the first non-maximal edge in $\sigma p = (e_2,e_3,\dots)$, so also 
\[\tau \sigma p = (f_2,\dots,f_{n-1},f_n,e_{n+1},\dots).\]
\end{proof}

\begin{lemma}
	For any $f$, $\phi_f$ satisfies:
	
	\begin{equation}\label{rec_phi_f}
	f(\tau p) = 
	\begin{cases}
		f(p) - \phi_f(p) + \phi_f(\sigma p) & \text{ if } p \text{ is top-floor}\\
		f(p) - \phi_f(p) & \text{ otherwise.}\\ 
	\end{cases}
	\end{equation}
\end{lemma}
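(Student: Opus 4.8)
The plan is to expand the definition of $\phi_f(p)$, peel off the $i=0$ term, and re-index the remaining tail so that it can be directly compared with $\phi_f(\sigma p)$. Writing
\[\phi_f(p) = \big(f(p) - f(\tau p)\big) + \sum_{i=1}^\infty \big(f(\sigma^i p) - f(\sigma^i \tau p)\big),\]
and using $\sigma^i = \sigma^{i-1}\circ \sigma$ to shift the index, the tail becomes $\sum_{j\geq 0}\big(f(\sigma^j(\sigma p)) - f(\sigma^j(\sigma\tau p))\big)$. The whole argument then reduces to understanding how $\sigma\tau p$ relates to $\tau\sigma p$, since $\phi_f(\sigma p) = \sum_{j\geq 0}\big(f(\sigma^j(\sigma p)) - f(\sigma^j(\tau\sigma p))\big)$. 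This single combinatorial fact about the interaction of $\tau$ and $\sigma$ dictates which of the two cases occurs.

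For the top-floor case I would invoke Lemma \ref{lem:top-floor}, which gives $\sigma\tau p = \tau\sigma p$. This identifies the shifted tail sum exactly with $\phi_f(\sigma p)$, so that
\[\phi_f(p) = f(p) - f(\tau p) + \phi_f(\sigma p),\]
and rearranging yields the claimed formula $f(\tau p) = f(p) - \phi_f(p) + \phi_f(\sigma p)$.

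For the non-top-floor case, the key observation is that the first edge $e_1$ of $p$ is already non-maximal, so in the explicit description of the adic map the minimal level $n$ with a non-maximal edge equals $1$. Hence $\tau$ alters only the first edge, and $\sigma\tau p = \sigma p$. Consequently $\sigma^i \tau p = \sigma^i p$ for every $i\geq 1$, so the entire tail sum vanishes term by term and we are left with $\phi_f(p) = f(p) - f(\tau p)$, i.e. $f(\tau p) = f(p) - \phi_f(p)$, as required.

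I expect no genuine obstacle here: both cases follow from tracking the one fact of how $\tau$ acts relative to $\sigma$, with the top-floor case handled by the already-established Lemma \ref{lem:top-floor} and the generic case by the direct observation that $\tau$ leaves the tail $(e_2,e_3,\dots)$ untouched. The only point needing a word of care is the finiteness of the sums involved, which is guaranteed because $p$ and $\tau p$ are tail-equivalent under $\sigma$ (as noted in Definition \ref{defn:phi_f}), so all but finitely many terms vanish and the rearrangements above are legitimate.
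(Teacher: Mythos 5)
Your proposal is correct and follows essentially the same route as the paper: the non-top-floor case uses the observation that $\tau$ changes only the first edge so that $\sigma^i\tau p = \sigma^i p$ for $i\geq 1$, and the top-floor case peels off the $i=0$ term, re-indexes the tail, and applies Lemma \ref{lem:top-floor} ($\sigma\tau p = \tau\sigma p$) to identify the remainder with $\phi_f(\sigma p)$. Nothing is missing.
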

\begin{proof}
	For non top-floor $p$, for any $i > 0, \sigma^i(p) = \sigma^i(\tau p)$, hence by definition $\phi_f(p) = f(p) - f(\tau p)$.
	
	For top-floor $p$, we have by Lemma \ref{lem:top-floor}
	 $\tau \sigma p = \sigma \tau p$. Hence
	\begin{align*}
		\phi_f(p) = &f(p) - f(\tau p) + \sum_{i=1}^{\infty} \big(f(\sigma^i p ) - f(\sigma^i \tau p )\big)\\
		=& f(p) - f(\tau p) + \sum_{i=1}^{\infty} \big(f(\sigma^{i-1} \sigma p ) - f(\sigma^{i-1} \sigma \tau p )\big)\\
		=& f(p) - f(\tau p) + \sum_{i=0}^{\infty} \big(f(\sigma^i \sigma p ) - f(\sigma^i \tau \sigma p )\big)\\
		=& f(p) - f(\tau p) + \phi_f(\sigma p).\qedhere
	\end{align*}
\end{proof}

Also, from the definition of $f$ via $\phi$ we know that the following recurrence holds.
\begin{lemma}
    \begin{equation}\label{rec_phi}
	f(\tau p) = 
	\begin{cases}
		0 & \text{ if } p \text{ is top-floor}\\
		f(p) - \phi(p) & \text{ otherwise.}\\ 
	\end{cases}
	\end{equation}
\end{lemma}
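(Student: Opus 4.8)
The plan is to prove the recurrence \eqref{rec_phi} by directly substituting the definition of $f$ from Definition \ref{defn:f} into both sides, using the explicit description of the adic map $\tau$ recorded just after its definition. The argument splits along the two cases of the statement, and in each case it reduces to comparing the two truncated Birkhoff sums that define $f(p)$ and $f(\tau p)$; no auxiliary machinery beyond the definitions is needed.

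First I would handle the non-top-floor case. If $p = ((j_1,l_1),e_2,\dots)$ is not top-floor, then $e_1 = (j_1,l_1)$ is not maximal, so the minimal index $n$ with $e_n$ non-maximal is $n=1$. Since edges in the tower $\1Z_{j_1}$ are ordered by $(j_1,l) > (j_1,l') \iff l > l'$, the minimal edge exceeding $(j_1,l_1)$ is $(j_1,l_1+1)$, and hence $\tau p = ((j_1,l_1+1),e_2,e_3,\dots)$. By Definition \ref{defn:f}, this gives $f(\tau p) = -\sum_{l=0}^{l_1}\phi_{s((j_1,l))}$ while $f(p) = -\sum_{l=0}^{l_1-1}\phi_{s((j_1,l))}$, so the two differ by the single term $-\phi_{s((j_1,l_1))}$. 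As $\phi(p) = \phi_{s(e_1)} = \phi_{s((j_1,l_1))}$, this yields $f(\tau p) = f(p) - \phi(p)$.

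Next I would handle the top-floor case. If $p$ is top-floor then $e_1$ is maximal, so the minimal index $n$ with $e_n$ non-maximal is at least $2$. By the explicit form of $\tau$, all edges below level $n$ are reset to their minimal representatives; in particular the first edge of $\tau p$ is $f_1 = \min\{e\in\mathcal{E}_1 : t(e) = s(f_2)\}$, which by the ordering above has second coordinate $0$, i.e. $f_1 = (j_1',0)$ for some $j_1'$. Thus $\tau p$ is bottom-floor, and the sum $-\sum_{l=0}^{-1}\phi_{s((j_1',l))}$ defining $f(\tau p)$ is empty, giving $f(\tau p) = 0$.

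I do not expect a substantive obstacle here: the only point requiring care is the bookkeeping in the top-floor case, namely checking that the ``reset to minimal'' step in the description of $\tau$ forces the leading edge of $\tau p$ to carry second coordinate $0$, so that $f$ is evaluated on an empty sum. Everything else is a one-line substitution into the definition of $f$.
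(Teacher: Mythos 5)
Your proof is correct and follows essentially the same route as the paper: in the non-top-floor case both arguments compute $f(\tau p)-f(p)$ as the single extra term $-\phi_{s((j_1,l_1))}=-\phi(p)$ of the defining sum, and in the top-floor case both observe that $\tau p$ is bottom-floor so that $f(\tau p)$ is an empty sum. The only difference is that you spell out why the reset-to-minimal step of the adic map forces the leading edge of $\tau p$ to have second coordinate $0$, which the paper leaves implicit.
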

\begin{proof}
Indeed, if $p$ is top-floor, then $\tau p$ is bottom-floor, so $f(\tau p) = 0$. Otherwise, if $p = ((j_1,l_1),\dots)$,

\begin{align*}
f(\tau p) - f(p) &= \Big[ \sum_{l=0}^{l_1}  -\phi_{s((j_1,l))} \Big] - 
\Big[ \sum_{l=0}^{l_1-1}  -\phi_{s((j_1,l))} \Big] \\
&= -\phi_{s((j_1,l_1))}\\
&= -\phi(p).
\end{align*}
\end{proof}

We'll need one more lemma to put the recurrence relations  (\ref{rec_phi_f}) and (\ref{rec_phi}) in the same form. 
This is the place where we use the assumption $\phi^{(N)} = \phi$. 
(Recall that $\phi^{(N)} = A^T \phi$, where  $A = A^{(0,N)}$.)
\begin{lemma}
If $\phi^{(N)} = \phi$, then for top-floor $p$, 
\begin{equation}\label{rec_top}
f(p)  - \phi(p) + \phi(\sigma p) = 0.
\end{equation}

Hence
\begin{equation}\label{rec_phi top}
	f(\tau p) = 
	\begin{cases}
		 f(p)  - \phi(p) + \phi(\sigma p) & \text{ if } p \text{ is top-floor}\\
		f(p) - \phi(p) & \text{ otherwise.}\\ 
	\end{cases}
	\end{equation}

\end{lemma}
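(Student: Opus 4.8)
The plan is to compute $f(p)$ explicitly for a top-floor path $p$ straight from Definition \ref{defn:f}, recognise it as a full return-time Birkhoff sum of $\phi$ around the tower $\1Z_{j_1}$ with only the top-floor contribution removed, and then feed in the periodic type assumption through Lemma \ref{lem:BS}. The identity (\ref{rec_top}) then reduces to carefully reading off the terms $\phi(p)$ and $\phi(\sigma p)$ from the source and target data of the first edge of $p$.

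First I would fix a top-floor path $p = (e_1,e_2,\dots)$ with $e_1 = (j_1,l_1)$, so that by the definition of top-floor $l_1 = q_{j_1}-1$. By Definition \ref{defn:f},
\[ f(p) = -\sum_{l=0}^{q_{j_1}-2} \phi_{s((j_1,l))}. \]
Next I would invoke Lemma \ref{lem:BS} applied to $T$ with $n=N$: since $q_{j_1} = q^{(0,N)}_{j_1}$ is the return time of $T$ to $\1J$ on $\1J_{j_1}$, for any $x \in \1J_{j_1}$ we have
\[ S^T_{q_{j_1}} \phi(x) = \sum_{l=0}^{q_{j_1}-1} \phi_{s((j_1,l))} = \phi^{(N)}_{j_1}. \]
The periodic type hypothesis $\phi^{(N)} = \phi$ gives $\phi^{(N)}_{j_1} = \phi_{j_1}$, so peeling off the top-floor term $l=q_{j_1}-1$ from the full sum yields
\[ f(p) = -\phi_{j_1} + \phi_{s((j_1,\,q_{j_1}-1))}. \]

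Finally I would identify the two remaining terms from the combinatorics of the Bratteli diagram. By the definition of $\phi:\Sigma \to G$ one has $\phi(p) = \phi_{s(e_1)} = \phi_{s((j_1,\,q_{j_1}-1))}$, while $\phi(\sigma p) = \phi_{s(e_2)}$; admissibility of $p$ forces $s(e_2) = t(e_1) = j_1$, so $\phi(\sigma p) = \phi_{j_1}$. Substituting,
\[ f(p) - \phi(p) + \phi(\sigma p) = \bigl(-\phi_{j_1} + \phi_{s((j_1,\,q_{j_1}-1))}\bigr) - \phi_{s((j_1,\,q_{j_1}-1))} + \phi_{j_1} = 0, \]
which is (\ref{rec_top}). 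The recurrence (\ref{rec_phi top}) then follows immediately: in the top-floor case the right-hand side $f(p) - \phi(p) + \phi(\sigma p)$ equals $0$ by (\ref{rec_top}), matching the top-floor value $f(\tau p) = 0$ given by (\ref{rec_phi}); the non-top-floor case of (\ref{rec_phi top}) is literally the non-top-floor case of (\ref{rec_phi}). There is no genuine obstacle here beyond bookkeeping, and the only point worth flagging is the source/target convention of the diagram, which is precisely what lets one read $\phi(\sigma p) = \phi_{t(e_1)} = \phi_{j_1}$ against $\phi(p) = \phi_{s(e_1)}$ so that the top-floor contribution cancels.
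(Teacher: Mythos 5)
Your proposal is correct and follows essentially the same route as the paper's own proof: both compute $f(p)$ for a top-floor path as the full return-time Birkhoff sum $S^T_{q_{j_1}}\phi$ on $J^{(1)}_{j_1}$ minus the top-floor term, identify that full sum with $\phi^{(N)}_{j_1}=\phi_{j_1}$ via Lemma \ref{lem:BS} and the periodicity hypothesis, and then read off $\phi(p)=\phi_{s(e_1)}$ and $\phi(\sigma p)=\phi_{s(e_2)}=\phi_{j_1}$ from the source/target conventions to get the cancellation. The only difference is cosmetic: you write the Birkhoff sums out as explicit sums over floors rather than in the $S^T_n$ notation.
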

\begin{proof}
Suppose $p = ((j_1,l_1),e_2,\dots)$ is top-floor. Then $l_1 = q-1$, where $q = q_{j_1}$.  Then from the definition of $f$:

\begin{align*}
f(p) &= - S^T_{q-1} \phi \Big|_{\1J_{j_1}}\\
& = \phi(p) - S^T_q \phi \Big|_{\1J_{j_1}}.
\end{align*}

{Since $q = q_{j_1}^{(0,N)}$, by Lemma \ref{lem:BS}, $S^T_q \phi \Big|_{\1J_{j_1}} = \phi^{(N)}_{j_1}$.}
Hence
\[f(p) = \phi(p) - {\phi}^{(N)}_{j_1}.\]

Now the first edge of $\sigma p$ is $e_2$, so $\phi(\sigma p) = \phi_{s(e_2)}$. As  $s(e_2) = j_1$, $\phi(\sigma p) = \phi_{j_1}$. 
By assumption $\phi^{(N)} = \phi$, hence $f(p) = \phi(p)-\phi(\sigma p)$.

For the last equality, substitute the relation we just obtained into (\ref{rec_phi}).
\end{proof}

Now we combine these lemmas.
\begin{prop}\label{phi_f = phi}
If $\phi^{(N)} = \phi$, then $\phi_f(p) = \phi(p)$ for all $p\in \Sigma\setminus\Sigma_{max}$.
\end{prop}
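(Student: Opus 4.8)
The plan is to set $\delta := \phi_f - \phi$ on $\Sigma \setminus \Sigma_{max}$ and show that $\delta$ vanishes identically. The conceptual work has already been carried out by the two preceding lemmas, which express $f(\tau p)$ through two recurrences of \emph{identical shape}: relation (\ref{rec_phi_f}) for the tail cocycle $\phi_f$, and relation (\ref{rec_phi top}) for $\phi$ — the latter being exactly where the hypothesis $\phi^{(N)} = \phi$ is consumed. Subtracting the two, the common term $f(\tau p) - f(p)$ cancels, and I am left with
\[
\delta(p) =
\begin{cases}
\delta(\sigma p) & \text{if } p \text{ is top-floor},\\
0 & \text{otherwise}.
\end{cases}
\]
Indeed, for non-top-floor $p$ both recurrences read $f(\tau p) = f(p) - \phi_\bullet(p)$, forcing $\phi_f(p) = \phi(p)$; for top-floor $p$ the extra terms give $\phi_f(\sigma p) - \phi_f(p) = \phi(\sigma p) - \phi(p)$, i.e. $\delta(p) = \delta(\sigma p)$.

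Next I would run a short propagation argument along the shift. Fix $p = (e_1, e_2, \dots) \in \Sigma \setminus \Sigma_{max}$ and let $n \geq 1$ be minimal with $e_n$ non-maximal; such $n$ exists precisely because $p$ is not maximal. If $n = 1$ then $p$ is non-top-floor and $\delta(p) = 0$ immediately. If $n \geq 2$, then for each $0 \leq i \leq n-2$ the path $\sigma^i p = (e_{i+1}, e_{i+2}, \dots)$ has maximal first edge $e_{i+1}$, hence is top-floor, so the top-floor case gives $\delta(\sigma^i p) = \delta(\sigma^{i+1} p)$; whereas $\sigma^{n-1} p$ has non-maximal first edge $e_n$, hence is non-top-floor and satisfies $\delta(\sigma^{n-1} p) = 0$. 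Chaining these equalities yields $\delta(p) = \delta(\sigma p) = \cdots = \delta(\sigma^{n-1} p) = 0$, which is the desired conclusion $\phi_f(p) = \phi(p)$.

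There is no serious obstacle here; the only point demanding care — and what I would verify explicitly — is that every intermediate shift $\sigma^i p$ with $0 \leq i \leq n-1$ actually lies in $\Sigma \setminus \Sigma_{max}$, so that the recurrences may legitimately be invoked at each step. This is automatic, since the non-maximal edge $e_n$ survives as the $(n-i)$-th edge of $\sigma^i p$ and thus witnesses non-maximality for all $i \leq n-1$. The genuine input of the statement, namely the periodic-type assumption $\phi^{(N)} = \phi$, has already been discharged in producing relation (\ref{rec_top}), so that what remains is purely the bookkeeping of aligning the two recurrences and propagating $\delta$ down to a non-top-floor path.
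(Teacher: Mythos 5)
Your proof is correct and follows essentially the same route as the paper: both compare the recurrences for $f(\tau p)$ coming from $\phi_f$ and from $\phi$ to get $\phi_f(p)=\phi(p)$ on non-top-floor paths and $\phi_f(p)-\phi_f(\sigma p)=\phi(p)-\phi(\sigma p)$ on top-floor paths, and then propagate along the shift until the first non-top-floor iterate. Your explicit check that each intermediate $\sigma^i p$ remains outside $\Sigma_{max}$ (so the recurrences apply) is a small point the paper leaves implicit, but the argument is the same.
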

\begin{proof}
By comparing equations (\ref{rec_phi_f}) and (\ref{rec_phi top}), we see that for all non top-floor $p$, $\phi_f(p) = \phi(p)$. 

For top-floor $p$ we deduce that
\[\phi(p) - \phi(\sigma p) = \phi_f(p) - \phi_f(\sigma p).\]
Since $p\notin \Sigma_{max}$, there exists some minimal $k$ for which $\sigma^k p$ is not 
top-floor. Then $\phi(\sigma^k p) = \phi_f(\sigma^k p)$, and so $\phi(p) = \phi_f(p)$.
\end{proof}

\section{Proof that the subset $\Delta \subset G$ is a subgroup.}\label{app:Delta}
Here we give a proof of the statement that $\Delta \subset G$ (defined below) is a subgroup. This is a well-known fact that has been 
stated by many authors, for example Marcus and Tuncel in \cite{Marcus_Tuncel_1991}, but since we did not find a proof in the literature, we 
present one here for completeness.

\begin{prop}
Let $\sigma: \Sigma \to \Sigma$ be a subshift of finite type with positive transition matrix $A$. Let $G$ be an abelian group and $f:\sigma \to G$ a function. 
Let $\textrm{Fix}_n = \{p\in \Sigma: \sigma^n(p) = p\}$. 
Then the set
\[\Delta := \{S^\sigma_n f (p_1) - S^\sigma_n f (p_2): p_1,p_2 \in \textrm{Fix}_n(\sigma) \text{ for some } n\geq 1\}\]
is a subgroup of $G$.
\end{prop}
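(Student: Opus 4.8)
The plan is to verify the three subgroup axioms for $\Delta$, the only substantial one being closure under addition. That $0\in\Delta$ is immediate by taking $p_1=p_2$ in some nonempty $\textrm{Fix}_n(\sigma)$ (nonempty since $A$ is positive), and closure under negation is immediate by interchanging $p_1$ and $p_2$. Throughout I would use two consequences of the positivity of $A$: first, between any two states there is at least one admissible edge (in particular a loop at every state), so $\textrm{Fix}_1(\sigma)\neq\emptyset$; second, I would use that, since $f$ factors through finitely many coordinates (it depends only on the first edge in the application), the Birkhoff sum $S^\sigma_n f(p)$ of a point $p\in\textrm{Fix}_n(\sigma)$ depends only on the cyclic word of length $n$ coding $p$, so that concatenating periodic codings adds their Birkhoff sums exactly.

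The key device to handle closure under addition is to pass to a based version of $\Delta$. Fixing a state $a$, let $\Delta_a$ be the set of differences $S^\sigma_n f(p)-S^\sigma_n f(q)$ where $p,q\in\textrm{Fix}_n(\sigma)$ both have their coding word beginning, and hence closing up, at $a$. For such orbits there is no obstruction to addition: given $\delta_1=S^\sigma_{n_1}f(p_1)-S^\sigma_{n_1}f(p_2)$ and $\delta_2=S^\sigma_{n_2}f(q_1)-S^\sigma_{n_2}f(q_2)$ in $\Delta_a$, I would concatenate the length-$n_1$ cycle of $p_i$ at $a$ with the length-$n_2$ cycle of $q_i$ at $a$ to form $r_i\in\textrm{Fix}_{n_1+n_2}(\sigma)$, again based at $a$. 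This concatenation is admissible precisely because both blocks close up at $a$, so no connecting edges are needed, and the additivity of Birkhoff sums gives $S^\sigma_{n_1+n_2}f(r_1)-S^\sigma_{n_1+n_2}f(r_2)=\delta_1+\delta_2$. Hence $\Delta_a$ is a subgroup of $G$.

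It then remains to show $\Delta=\Delta_a$, which I would establish by the inclusion $\Delta\subseteq\Delta_a$, the reverse being trivial. Given $\delta=S^\sigma_n f(p)-S^\sigma_n f(q)$ with $p,q$ based at states $u,v$, I would reroute each through $a$: using positivity, pick edges $a\to u$ and $u\to a$, and likewise for $v$, and prepend and append them to the cycles of $p$ and $q$ to obtain based-at-$a$ periodic points of common length $n+2$. Their Birkhoff-sum difference equals $\delta$ plus a correction coming from the four inserted edges; this correction is itself the difference of the Birkhoff sums of two length-$2$ cycles based at $a$, hence lies in $\Delta_a$. Since $\Delta_a$ is already known to be a group, I can add the negative of the correction to conclude $\delta\in\Delta_a$.

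The main obstacle is exactly this closure under addition: naive concatenation of two arbitrary periodic orbits requires connecting edges whose $f$-contributions do not cancel, and matching base states cannot be arranged directly since two periodic orbits need share no state. The based-set device circumvents this by performing the addition only among orbits that already close up at the common state $a$, so that no connecting edges enter the addition step, and by relegating the connecting edges to the comparison $\Delta=\Delta_a$, where their contribution is absorbed into the already-established group $\Delta_a$.
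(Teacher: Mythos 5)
Your proof is correct, but it is organised differently from the one in the paper. The paper handles closure under addition in a single construction: it fixes a base vertex $u$, chooses connector paths $\gamma^{\pm}_{x}$ between $u$ and each of the four base vertices of $\bar p_1,\bar p_2,\bar q_1,\bar q_2$, and builds \emph{two} periodic orbits of the same length, each of which traverses \emph{all four} round trips $\gamma^+_x\cdot\gamma^-_x$ (with $\bar p_1,\bar q_1$ spliced into the first orbit and $\bar p_2,\bar q_2$ into the second). Because the two orbits contain the same multiset of connector edges, the connector contributions cancel outright in the difference of Birkhoff sums, which is then exactly $\delta_p+\delta_q$. You instead introduce the based set $\Delta_a$, prove it is a group by seam-free concatenation of cycles at $a$, and then show $\Delta=\Delta_a$ by rerouting arbitrary cycles through $a$ and absorbing the connector correction --- itself an element of $\Delta_a$ --- using the group structure already established. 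The paper's version is a self-contained one-shot cancellation (at the cost of the slightly clever insertion of ``dummy'' round trips); yours is more modular and makes transparent exactly where the connecting edges are disposed of. Both arguments use positivity of $A$ only to produce connectors of controlled length so that the two orbits being compared have equal period, and both rely on the fact that $f$ depends only on the first coordinate so that Birkhoff sums add exactly under concatenation of cyclic words --- a hypothesis the paper leaves implicit and which you at least flag explicitly.
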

\begin{proof}
Since $-(S^\sigma_n f (p_1) - S^\sigma_n f (p_2)) = S^\sigma_n f (p_2) - S^\sigma_n f (p_1)$, it is immediate that $-\Delta = \Delta$.

Thus what we are required to show is that for any two elements $\delta_p,\delta_q \in \Delta$, $\delta_p + \delta_q \in \Delta$.\\

For some $n,m$, $p_1,p_2 \in \textrm{Fix}_n(\sigma)$ and $q_1,q_2 \in \textrm{Fix}_m(\sigma)$  we can write 
$\delta_p =  S^\sigma_n f (p_1) - S^\sigma_n f (p_2)$ and $\delta_q =  S^\sigma_m f (q_1) - S^\sigma_m f (q_2)$.

Denote by $\bar p_i \in \Sigma_n, \bar q_i \in \Sigma_m$ the truncations of $p_i$ and $q_i$ which are the repeating sections of $p_i$ and $q_i$.  
Let $v_i := s(\bar p_i)=t(\bar p_i),\, w_i := s(\bar q_i)=t(\bar q_i)$.

Fix some vertex $u$ and find finite admissible paths $\gamma_{v_i}^{\pm}, \gamma_{w_i}^{\pm}$ such that for $x = v_1,v_2,w_1,w_2$, the path $\gamma^+_x$ 
goes from $u$ to $x$ and $\gamma^-_x$ from $x$ to $u$:
\begin{itemize}
	\item $s(\gamma_{x}^{+}) = u, t(\gamma_{x}^{+}) = x$,
	\item $s(\gamma_{x}^{-}) = x, t(\gamma_{x}^{-}) = u$.
\end{itemize}

See Figure \ref{fig:loops} for illustration. These paths exist due to the positivity of $A$.

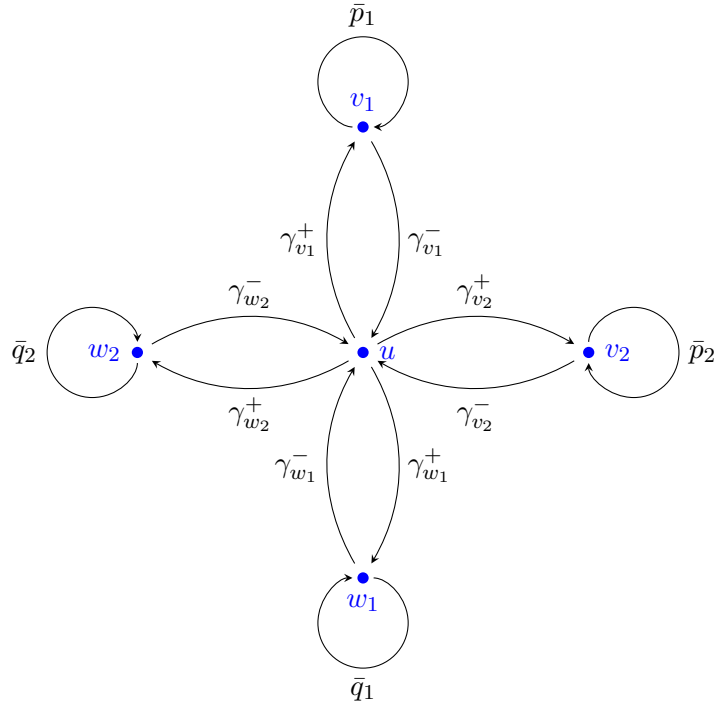
\begin{figure}[H]
\centering
\begin{tikzpicture}[scale=3, vertex/.style={circle,inner sep=1.5pt,fill=blue}, every label/.style={text=blue}, bend left,->,>=stealth,shorten > = 4pt,shorten < = 4pt]
	\node[vertex,label=right:{$u$}] at (0,0) (0) {};
	\node[vertex,label=right:{$v_2$}] at (1,0)  (2) {};
	\node[vertex,label=above:{$v_1$}] at (0,1)  (1) {};
	\node[vertex,label=left:{$w_2$}] at (-1,0) (4) {};
	\node[vertex,label=below:{$w_1$}] at (0,-1) (3) {};
	\path (0) edge node[left]{$\gamma^+_{v_1}$} (1);
	\path (1) edge node[right]{$\gamma^-_{v_1}$} (0);
	\path (0) edge node[above]{$\gamma^+_{v_2}$} (2);
	\path (2) edge node[below]{$\gamma^-_{v_2}$} (0);
	\path (0) edge node[right]{$\gamma^+_{w_1}$} (3);
	\path (3) edge node[left]{$\gamma^-_{w_1}$} (0);
	\path (0) edge node[below]{$\gamma^+_{w_2}$} (4);
	\path (4) edge node[above]{$\gamma^-_{w_2}$} (0);
	\draw[shorten > = 4pt,shorten < = 4pt] (1) arc (270:-90:0.2) node[midway,above] {$\bar p_1$};
	\draw[shorten > = 4pt,shorten < = 4pt] (2) arc (180:-180:0.2) node[midway,right] {$\bar p_2$};
	\draw[shorten > = 4pt,shorten < = 4pt] (3) arc (90:-270:0.2) node[midway,below] {$\bar q_1$};
	\draw[shorten > = 4pt,shorten < = 4pt] (4) arc (0:-360:0.2) node[midway,left] {$\bar q_2$};

\end{tikzpicture}
\caption{The paths $\bar p_i, \bar q_i, \gamma^{\pm}_{v_i}, \gamma^{\pm}_{w_i}$ in the graph corresponding to the subshift $\sigma$.}
\label{fig:loops}
\end{figure}

Now define the following paths (where $\cdot$ denotes concatenation):
\begin{itemize}

\item $\bar a_1 = (\gamma^+_{v_1} \cdot \bar p_1 \cdot \gamma^-_{v_1}) \cdot (\gamma^+_{v_2}  \cdot \gamma^-_{v_2})
\cdot (\gamma^+_{w_1} \cdot \bar q_1 \cdot \gamma^-_{w_1}) \cdot (\gamma^+_{w_2} \cdot \gamma^-_{w_2})$,

\item $\bar a_2 = (\gamma^+_{v_1}  \cdot \gamma^-_{v_1}) \cdot (\gamma^+_{v_2} \cdot \bar p_2 \cdot \gamma^-_{v_2})
\cdot (\gamma^+_{w_1} \cdot \gamma^-_{w_1}) \cdot (\gamma^+_{w_2} \cdot \bar q_2 \cdot \gamma^-_{w_2})$.
\end{itemize}

\begin{figure}[H]
\begin{subfigure}[b]{0.4\textwidth}
\centering
\begin{tikzpicture}[scale=1.5, vertex/.style={circle,inner sep=1.5pt,fill=blue}, every label/.style={text=blue}, bend left,->,>=stealth,shorten > = 4pt,shorten < = 4pt]
	\node[vertex] at (0,0) (0) {};
	\node[vertex] at (1,0)  (2) {};
	\node[vertex] at (0,1)  (1) {};
	\node[vertex] at (-1,0) (4) {};
	\node[vertex] at (0,-1) (3) {};
	\path (0) edge[red]  (1);
	\path (1) edge[red]  (0);
	\path (0) edge[red]  (2);
	\path (2) edge[red]  (0);
	\path (0) edge[red]  (3);
	\path (3) edge[red]  (0);
	\path (0) edge[red]  (4);
	\path (4) edge[red]  (0);
	\draw[shorten > = 3pt,shorten < = 3pt,red] (1) arc (270:-90:0.2) ;
	\draw[shorten > = 3pt,shorten < = 3pt] (2) arc (180:-180:0.2) ;
	\draw[shorten > = 3pt,shorten < = 3pt,red] (3) arc (90:-270:0.2) ;
	\draw[shorten > = 3pt,shorten < = 3pt] (4) arc (0:-360:0.2) ;
	
	\node[red] at (0.7,0.7) {$\bar a_1$};

\end{tikzpicture}
\end{subfigure}
\begin{subfigure}[b]{0.4\textwidth}
\centering
\begin{tikzpicture}[scale=1.5, vertex/.style={circle,inner sep=1.5pt,fill=blue}, every label/.style={text=blue}, bend left,->,>=stealth,shorten > = 4pt,shorten < = 4pt]
	\node[vertex] at (0,0) (0) {};
	\node[vertex] at (1,0)  (2) {};
	\node[vertex] at (0,1)  (1) {};
	\node[vertex] at (-1,0) (4) {};
	\node[vertex] at (0,-1) (3) {};
	\path (0) edge[orange]  (1);
	\path (1) edge[orange]  (0);
	\path (0) edge[orange]  (2);
	\path (2) edge[orange]  (0);
	\path (0) edge[orange]  (3);
	\path (3) edge[orange]  (0);
	\path (0) edge[orange]  (4);
	\path (4) edge[orange]  (0);
	\draw[shorten > = 3pt,shorten < = 3pt] (1) arc (270:-90:0.2) ;
	\draw[shorten > = 3pt,shorten < = 3pt,orange] (2) arc (180:-180:0.2) ;
	\draw[shorten > = 3pt,shorten < = 3pt] (3) arc (90:-270:0.2) ;
	\draw[shorten > = 3pt,shorten < = 3pt,orange] (4) arc (0:-360:0.2) ;
	
	\node[orange] at (0.7,0.7) {$\bar a_2$};

\end{tikzpicture}
\end{subfigure}
\caption{The paths $\bar a_1$ and $\bar a_2$.}
\end{figure}
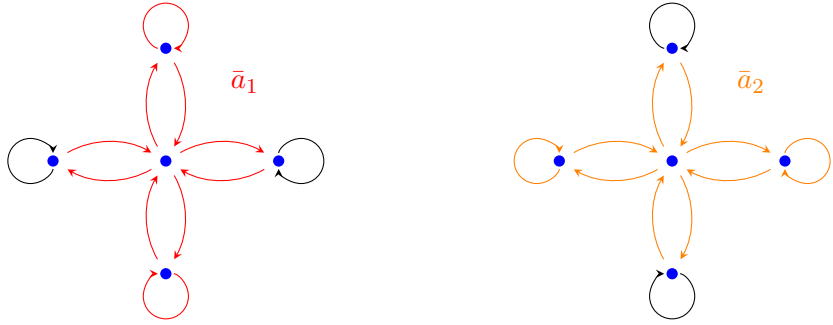

Let also $\bar b = (\gamma^+_{v_1}  \cdot \gamma^-_{v_1}) \cdot (\gamma^+_{v_2}  \cdot \gamma^-_{v_2})
\cdot (\gamma^+_{w_1}  \cdot \gamma^-_{w_1}) \cdot (\gamma^+_{w_2} \cdot \gamma^-_{w_2})$.
Define the infinite periodic paths $a_i$ and $b$ by repeating $\bar a_i$ (respectively $\bar b$) periodically.
If $\bar b$ has length $k$, then $\bar a_i$ both have lengths 
$N = k+n+m$, and so $a_1,a_2$ both lie in $\textrm{Fix}_N(\sigma)$.

Now, by construction, 
\begin{align*}
S^\sigma_N f(a_1) - S^\sigma_N f(a_2) &= S^\sigma_k (b) + S^\sigma_n f(p_1) + S^\sigma_m f(q_1) - 
\big(S^\sigma_k (b) + S^\sigma_n f(p_2) + S^\sigma_m f(q_2)\big) \\
&= S^\sigma_n f(p_1) - S^\sigma_n f(p_2) + S^\sigma_m f(q_1) - S^\sigma_m f(q_2) = \delta_p + \delta_q.
\end{align*}
Hence we deduce that $\delta_p+\delta_q \in \Delta$ and thus $\Delta$ is a group.
\end{proof}

\bibliography{periodic2}{}
\bibliographystyle{amsalpha}
\end{document}